\theoremstyle{definition}
\newtheorem{theorem}{Theorem}[section]
\newtheorem{proposition}[theorem]{Proposition}
\newtheorem{lemma}[theorem]{Lemma}
\newtheorem{corollary}[theorem]{Corollary}
\newtheorem{definition}{Definition}[section]
\newtheorem{remark}{Remark}[section]
\newtheorem{example}{Example}[section]
\numberwithin{equation}{section}
\newcommand{\cip}{\,\perp\!\!\!\perp}
\newcommand{\as}[1]{\mbox{\rm a.s.\ [$#1$]}}
\newcommand{\cS}{{\cal S}}
\newcommand{\cA}{{\cal A}}
\newcommand{\cG}{{\cal G}}
\newcommand{\cF}{{\cal F}}
\newcommand{\cB}{{\cal B}}
\newcommand{\cD}{{\cal D}}
\newcommand{\cP}{{\cal P}}
\newcommand{\mE}{\mathbb{E}}
\newcommand{\mP}{\mathbb{P}}
\newcommand{\mN}{\mathbb{N}}
\newcommand{\mR}{{\mathbb R}}
\newcommand{\indo}[2]{\mbox{$#1 \,\cip\, #2$}}
\newcommand{\ind}[3]{\mbox{$#1 \, \cip\, #2 \mid #3$}}
\newcommand{\sind}[3]{\mbox{$#1 \, \cip_{s}\, #2 \mid #3$}}
\newcommand{\vind}[3]{\mbox{$#1 \, \cip_{v}\, #2 \mid #3$}}
\newcommand{\pind}[3]{\mbox{$#1 \, \cip_{p}\, #2 \mid #3$}}
\renewcommand{\b}[1]{\boldsymbol{#1}}
\newcommand{\bX}{\b{X}}
\newcommand{\ie}{{\em i.e.\/}\xspace}
\newcommand{\eg}{{\em e.g.\/}\xspace}
\newcommand{\etc}{{\em etc.\/}\xspace}
\newcommand{\Ci}{Conditional independence}
\newcommand{\ci}{conditional independence}
\newcommand{\cis}{conditional independence}
\newcommand{\eci}{extended conditional independence}
\newcommand{\ecis}{extended conditional independence}
\newcommand{\pecis}{pairwise extended conditional independence}
\newcommand{\sci}{stochastic conditional independence}
\newcommand{\scis}{stochastic conditional independence}
\newcommand{\Vci}{Variation conditional independence}
\newcommand{\vcis}{variation conditional independence}
\newcommand{\dts}{Decision Theoretic framework}
\renewcommand{\eqref}[1]{\mbox{(\ref{eq:#1})}}
\newcommand{\secref}[1]{\mbox{Section~\ref{sec:#1}}}
\newcommand{\lemref}[1]{\mbox{Lemma~\ref{lem:#1}}}
\newcommand{\defref}[1]{\mbox{Definition~\ref{def:#1}}}
\newcommand{\itref}[1]{\mbox{\ref{it:#1}}}
\newcommand{\thmref}[1]{\mbox{Theorem~\ref{thm:#1}}}
\newcommand{\corref}[1]{\mbox{Corollary~\ref{cor:#1}}}
\newcommand{\proporef}[1]{Proposition~\ref{propo:#1}}
\newcommand{\appref}[1]{\mbox{Appendix~\ref{app:#1}}}
\newcommand{\remref}[1]{Remark~\ref{rem:#1}}
\newcommand{\ace}{\mbox{\rm ACE}}
\newcommand{\halm}{\hspace*{\fill} $\Box$\par}
\newenvironment{proof0}[1]{\noindent {\bf Proof of
    {#1}. }}{\halm\vspace{\baselineskip}}
\newenvironment{ex}{\begin{example}\rm}{\halm\end{example}}
\title{Extended Conditional Independence and Applications in Causal
  Inference} \author{Panayiota Constantinou and A. Philip Dawid}
\begin{document}
\maketitle

\begin{abstract}
  The goal of this paper is to integrate the notions of \scis\ and
  \vcis\ under a more general notion of \eci.  We show that under
  appropriate assumptions the calculus that applies for the two cases
  separately (axioms of a separoid) still applies for the extended
  case.  These results provide a rigorous basis for a wide range of
  statistical concepts, including ancillarity and sufficiency, and, in
  particular, the \dts\ for statistical causality, which uses the
  language and calculus of conditional independence in order to
  express causal properties and make causal inferences.

\end{abstract}

\section{Introduction}
\label{sec:1}

\Ci\ is a concept that has been widely studied and used in Probability
and Statistics.  The idea of treating \cis\ as an abstract concept
with its own calculus was introduced by~\citet{daw:79a}, who showed
that many results and theorems concerning statistical concepts such as
ancillarity, sufficiency, causality \etc, are just applications of
general properties of \cis---extended to encompass stochastic and
non-stochastic variables together.  Properties of \cis\ have also been
investigated by~\citet{spo:94} in connection with causality, and
\citet{peapaz:86, pea:00,geiver:90,laudaw:90} in connection with
graphical models.

In this paper, we consider two separate concepts of \ci: \emph{\scis},
which involves solely stochastic variables, and \emph{\vcis}, which
involves solely non-stochastic variables.  We argue that, although
these concepts are fundamentally different in terms of their
mathematical definitions, they share a common intuitive understanding
as ``irrelevance'' relations.  This allows them to satisfy the same
set of rules (axioms of a separoid~\citep{daw:01a}).  Armed with this
insight, we unify the two notions into the more general concept of
\emph{\ecis}, and show that (under suitable technical conditions)
\ecis\ also satisfies the axioms of a separoid.

To motivate the need for such a theory we recall some fundamental
concepts of statistics.  First, consider the concept of
\emph{ancillarity}~\citep{fis:25}.  Let $\b{X}:=(X_1,X_2,\ldots,X_n)$
be a random sample from a probability distribution with unknown
parameter $\theta$, and let $T=T(\b{X})$ be a statistic.  $T$ is
called an \emph{ancillary statistic for $\theta$} if its distribution
does not depend on the value of $\theta$~\citep{bas:64}.  For example,
consider an independent and identically distributed sample
$(X_1,X_2,\ldots,X_n)$ from the normal $\mathcal{N}(\theta, 1)$
distribution.  Then the range
$T:= \max\{X_1,X_2,\ldots,X_n\} -\min\{X_1,X_2,\ldots,X_n\}$ is an
ancillary statistic, because its distribution does not change as
$\theta$ changes.  Ancillary statistics can be used to recover
information lost by reducing the data to the maximum likelihood
estimate~\citep{ghorei:10}.  For our purposes, we remark that the
definition of ancillarity can be understood intuitively as requiring
the independence of the stochastic variable $T$ from the
non-stochastic variable $\theta$.  We can express this property using
the now standard (conditional) independence notation of
\citet{daw:79a}: $\indo{T}{\theta}$.

Another example is the notion of \emph{sufficiency}~\citep{fis:25}.
With notation as above, $T$ is a \emph{sufficient statistic for
  $\theta$} if the conditional distribution of the full data $\b{X}$,
given the value of $T(\b{X})$, does not depend on the value of the
parameter $\theta$~\citep[p.~272]{casber:01}.  For example, consider
an independent and identically distributed sample
$\bX = (X_1,X_2,\ldots,X_n)$ from the Poisson distribution with mean
$\theta$.  Then the sample total $T= X_1+X_2+ \ldots+X_n$ is a
sufficient statistic for $\theta$, since the distribution of $\bX$,
given $T=t$, is multinomial ${\cal M}(t; 1/n,\ldots,1/n)$ for all
$\theta$.  Here we emphasize that sufficiency can be expressed
intuitively as: ``Given $T$, $\bX$ is independent of $\theta$'', where
$\b{X}$ and $T$ are stochastic variables and $\theta$ is a
non-stochastic variable.  Using conditional independence notation:
\ind{\b{X}}{\theta}{T}.

A further application of these ideas emerges from the area of
causality: in particular, the \dts\ of statistical causality
\citep{apd:annrev}.  In this framework, the language and calculus of
conditional independence are fundamental for expressing and
manipulating causal concepts.  The \dts\ differentiates between
observational and interventional regimes, using a non-stochastic
variable to index the regimes.  Typically, we consider the regime
under which data can be collected (the \emph{observational} regime)
and a number of \emph{interventional} regimes that we wish to compare.
Since we mostly have access to purely observational data, we focus on
extracting information from the observational regime relevant to the
interventional regimes.  Then the conditions that would justify
transfer of information across regimes can be expressed in the
language of conditional independence.  To illustrate this, suppose we
are interested in assessing the effect of a binary treatment variable
$T$ on a disease outcome variable $Y$ (\eg, recovery).  Denote
\begin{equation*}
  T = \left\{
    \begin{array}{rl}
      0, & \text{for control treatment} \\
      1, & \text{for active treatment.}
    \end{array} \right.
\end{equation*}
We consider three regimes, indexed by a non-stochastic variable
$\Sigma$:
\begin{equation*}
  \Sigma = \left\{
    \begin{array}{rl}
      \emptyset & \text{denotes the observational regime} \\
      0 & \text{denotes the interventional regime under control treatment} \\
      1 & \text{denotes the interventional regime under active treatment.}
    \end{array} \right.
\end{equation*}
In the simplest case, we might entertain the following (typically
unrealistic!) property: for either treatment choice $T=0,1$, the
conditional distribution of the disease variable $Y$, given the
treatment variable $T$, is the same in the observational and the
corresponding interventional regime.  We can express this property,
using conditional independence notation, as \ind{Y}{\Sigma}{T}.  Such
a property, when it can be taken as valid, would allow us to use the
observational regime to make causal inference directly.  However, in
most cases this assumption will not be easy to defend.  Consequently
we would like to explore alternative, more justifiable, conditions,
which would allow us to make causal inference.  For such exploration a
calculus of \ecis\ becomes a necessity.

The layout of the paper is as follows.  In \secref{2} we give the
definition of a \emph{separoid}, an algebraic structure with five
axioms, and show that \scis\ and \vcis\ satisfy these axioms.  In
\secref{3} we rigorously define \emph{\ecis}, a combined form of
stochastic and variation conditional independence, and explore
conditions under which \ecis\ satisfies the separoid axioms, for the
most part restricting to cases where the left-most term in an \ecis\
relation is purely stochastic.  In \secref{bayes} we take a Bayesian
approach, which allows us to deduce the axioms when the regime space
is discrete.  Next, using a more direct measure-theoretic approach, we
show in \secref{nodiscrete} that the axioms hold when all the
stochastic variables are discrete, and likewise in the presence of a
dominating regime.  In \secref{pairwise} we introduce a slight
weakening of \ecis, for which the axioms apply without further
conditions.  Next, \secref{tcodrv} attempts to extend the analysis to
cases where non-stochastic variables appear in the left-most term.
Our analysis is put to use in \secref{4}, which gives some examples of
its applications in causal inference, illustrating how \eci, equipped
with its separoid calculus, provides a powerful tool in the area.  We
conclude in \secref{disc} with some comments on the usefulness of
combining the theory of \ecis\ with the technology of graphical
models.

\section{Separoids}
\label{sec:2}

In this Section we describe the algebraic structure called a
\emph{separoid}~\citep{daw:01a}: a three-place relation on a join
semilattice, subject to five axioms.

Let $V$ be a set with elements denoted by $x,y,\dots$, and $\leq$ a
quasiorder (a reflexive and transitive binary relation) on $V$.  For
$x,y \in V$, if $x\leq y$ and $y \leq x$, we say that $x$ and $y$ are
\emph{equivalent} and write $x \approx y$.  For a subset
$A \subseteq V$, $z$ is a \emph{join} of $A$ if $a \leq z$ for all
$a\in A$, and it is a minimal element of $V$ with that property; we
write $z = \bigvee A$; similarly, $z$ is a \emph{meet} of $A$
($z = \bigwedge A$) if $z\leq a$ for all $a\in A$, and it is a maximal
element of $V$ with that property.  We write $x\vee y$ for
$\bigvee \{x,y\}$, and $x\wedge y$ for $\bigwedge \{x,y\}$.

Clearly if $z$ and $w$ are both joins (respectively meets) of $A$,
then $z \approx w$.  We call $(V,\leq)$ (or, when $\leq$ is
understood, just $V$) a \emph{join semilattice} if there exists a join
for any nonempty finite subset; similarly, $(V,\leq)$ is a \emph{meet
  semilattice} if there exists a meet for any nonempty finite subset.
When $(V,\leq)$ is both a meet and join semilattice, it is a
\emph{lattice}.
 
\begin{definition}[Separoid]
  \label{def:separoid}
  Given a ternary relation $\cdot \cip \cdot \mid \cdot$ on $V$, we
  call $\cip$ a \emph{separoid} $($on $(V,\leq))$, or the triple
  $(V,\leq,\cip)$ a \emph{separoid}, if:
  \begin{itemize}
  \item[S1:] $(V,\leq)$ is a join semilattice
  \end{itemize}
  and
  \begin{itemize}
  \item[P1:] \ind{x}{y}{z} $\Rightarrow$ \ind{y}{x}{z}
  \item[P2:] \ind{x}{y}{y}
  \item[P3:] \ind{x}{y}{z} and $w\leq y$ $\Rightarrow$ \ind{x}{w}{z}
  \item[P4:] \ind{x}{y}{z} and $w\leq y$ $\Rightarrow$
    \ind{x}{y}{(z\vee w)}
  \item[P5:] \ind{x}{y}{z} and \ind{x}{w}{(y\vee z)} $\Rightarrow$
    \ind{x}{(y \vee w)}{z}
  \end{itemize}
\end{definition}

The following Lemma shows that, in P4 and P5, the choice of join does
not change the property.

\begin{lemma}
  Let $(V,\leq,\cip)$ be a separoid and $x_i,y_i,z_i \in V$ ($i=1,2$)
  with $x_1\approx x_2$, $y_1\approx y_2$ and $z_1\approx z_2$.  If
  \ind{x_1}{y_1}{z_1} then \ind{x_2}{y_2}{z_2}.
\end{lemma}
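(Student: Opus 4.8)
The plan is to prove a slightly more modular statement: that the relation $\cip$ is unchanged when \emph{any single one} of its three arguments is replaced by an equivalent element. Since $\approx$ is an equivalence relation, one can pass from $\ind{x_1}{y_1}{z_1}$ to $\ind{x_2}{y_2}{z_2}$ by altering the arguments one at a time (replacing $x_1$ by $x_2$, then $y_1$ by $y_2$, then $z_1$ by $z_2$), so it suffices to establish the three one-argument replacement rules and then compose them.

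The replacement in the middle slot is immediate from P3: if $\ind{x}{y}{z}$ and $y \approx y'$, then in particular $y' \leq y$, so P3 gives $\ind{x}{y'}{z}$, and the reverse implication follows by symmetry of $\approx$. The replacement in the first slot then reduces to the middle slot via P1: from $\ind{x}{y}{z}$ use P1 to obtain $\ind{y}{x}{z}$, replace $x$ by $x'$ in the (now middle) slot by the rule just proved, and apply P1 once more to get $\ind{x'}{y}{z}$.

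The conditioning slot is the crux, because no axiom directly alters the conditioning term: P3 and P5 leave it fixed, and P4 only enlarges it. The maneuver I would use routes the change through the second slot and through P4, exploiting the freedom in the choice of join. Starting from $\ind{x}{y}{z}$ with $z \approx z'$, first enlarge the second argument to admit $z'$: since $z' \leq z \leq y \vee z$, axioms P2 and P3 give $\ind{x}{z'}{(y \vee z)}$, and then P5 applied to $\ind{x}{y}{z}$ together with $\ind{x}{z'}{(y \vee z)}$ yields $\ind{x}{(y \vee z')}{z}$. Now $z' \leq y \vee z'$, so P4 adjoins $z'$ to the conditioning term, giving $\ind{x}{(y \vee z')}{(z \vee z')}$; and because $z \leq z'$ the element $z'$ is itself a join of $\{z, z'\}$, so this relation reads $\ind{x}{(y \vee z')}{z'}$. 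Finally P3 shrinks the second argument back (since $y \leq y \vee z'$), delivering $\ind{x}{y}{z'}$.

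The main obstacle is exactly this conditioning rule. The delicate point is the step where P4 adjoins $z'$ to the conditioning set: one must observe that, as $z \leq z'$, the term $z'$ is a legitimate representative of the join $z \vee z'$, so that P4 produces the conditioning term $z'$ on the nose rather than merely some element equivalent to it. This is essential, since otherwise we would be left needing the very conditioning-replacement rule we are trying to prove. Modulo this observation the argument is a routine chaining of P1--P5, and composing the three replacement rules then gives the Lemma.
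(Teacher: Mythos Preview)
Your argument is correct. The paper does not supply its own proof of this lemma but simply cites Corollary~1.2 of \citet{daw:01a}, so there is no in-paper derivation to compare against. Your treatment of the conditioning slot---enlarging the second argument via P2/P3 and P5, then invoking P4 with $z'$ chosen as the join of $\{z,z'\}$ (legitimate since $z\leq z'$), and finally shrinking back with P3---is precisely the standard manoeuvre, and your explicit remark that $z'$ must be taken \emph{as} the join (rather than merely equivalent to it) to avoid circularity is the key observation that makes the argument go through.
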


\begin{proof}
  See Corollary~1.2 in~\citet{daw:01a}.
\end{proof}

\begin{definition}[Strong separoid]
  We say that the triple $(V,\leq,\cip)$ is a \emph{strong separoid}
  if we strengthen S1 in \defref{separoid} to
  \begin{itemize}
  \item[S$1'$:] $(V,\leq)$ is a lattice
  \end{itemize}
  and in addition to P1--P5 we require
  \begin{itemize}
  \item[P6:] if $z\leq y$ and $w\leq y$, then \ind{x}{y}{z} and
    \ind{x}{y}{w} $\Rightarrow$ \ind{x}{y}{(z\wedge w)}.
  \end{itemize}
\end{definition}

\subsection{Stochastic conditional independence as a separoid}

The concept of stochastic conditional independence is a familiar
example of a separoid (though not, without further conditions, a
strong separoid~\citep{daw:79b}).

Let $(\Omega,\cA)$, $(F,\cF)$ be measure spaces, and
$Y:\Omega\rightarrow F$ a random variable.  We denote by $\sigma(Y)$
the $\sigma$-algebra generated by $Y$, \ie\
$\{Y^{-1}(C): C \in \cF\}$.  We write
$Y:(\Omega,\cA) \rightarrow (F,\cF)$ to imply that $Y$ is measurable
with respect to the $\sigma$-algebras $\cA$ and $\cF$; equivalently,
$\sigma(Y)\subseteq \cA$.

\begin{lemma}
  \label{lem:measurability}
  Let $Y: (\Omega,\sigma(Y)) \rightarrow (F_{Y},\cF_{Y})$ and
  $Z: (\Omega,\sigma(Z)) \rightarrow (F_{Z}, \cF_{Z})$ be surjective
  random variables.  Suppose that $\cF_Y$ contains all singleton sets
  $\{y\}$.  Then the following are equivalent:
  \begin{enumerate}[label=(\roman{enumi})]
  \item
    \label{it:aa1}
    $\sigma(Y) \subseteq \sigma(Z)$.  \
  \item
    \label{it:aa2}
    There exists measurable
    $f: (F_{Z},\cF_{Z}) \rightarrow (F_{Y},\cF_{Y})$ such that
    $Y=f(Z)$.
  \end{enumerate}
\end{lemma}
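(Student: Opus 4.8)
This is the standard functional-representation (Doob--Dynkin) lemma, and the plan is to treat the two implications separately. The implication \itref{aa2} $\Rightarrow$ \itref{aa1} is immediate: if $Y = f(Z)$ with $f$ measurable, then for every $C \in \cF_Y$ we have $Y^{-1}(C) = Z^{-1}(f^{-1}(C))$, and measurability of $f$ gives $f^{-1}(C) \in \cF_Z$, so $Y^{-1}(C) \in \sigma(Z)$; ranging over $C \in \cF_Y$ yields $\sigma(Y) \subseteq \sigma(Z)$.

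The substantive direction is \itref{aa1} $\Rightarrow$ \itref{aa2}, and here the plan is to build $f$ fibrewise. Since $Z$ is surjective, every $z \in F_Z$ equals $Z(\omega)$ for some $\omega \in \Omega$, and I would set $f(z) := Y(\omega)$ for such a chosen $\omega$. The argument then reduces to three checks: that $f$ is well defined, that $Y = f(Z)$, and that $f$ is measurable. The identity $Y = f(Z)$ is automatic from the construction, taking $\omega$ itself as the representative of $z = Z(\omega)$.

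The crux, and the step I expect to be the main obstacle, is well-definedness: I must show that $Y$ is constant on each fibre $Z^{-1}(\{z\})$, that is, that $Z(\omega_1) = Z(\omega_2)$ forces $Y(\omega_1) = Y(\omega_2)$. This is exactly where the singleton hypothesis on $\cF_Y$ enters. Writing $y_1 := Y(\omega_1)$, the set $\{y_1\}$ lies in $\cF_Y$, so $A := Y^{-1}(\{y_1\}) \in \sigma(Y) \subseteq \sigma(Z)$, whence $A = Z^{-1}(B)$ for some $B \in \cF_Z$. Any such preimage is a union of whole $Z$-fibres; since $\omega_1 \in A$ and $\omega_2$ lies in the same fibre as $\omega_1$, we get $\omega_2 \in A = Y^{-1}(\{y_1\})$, giving $Y(\omega_2) = y_1 = Y(\omega_1)$, as required.

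Finally, for measurability I would fix $C \in \cF_Y$ and use $\sigma(Y) \subseteq \sigma(Z)$ to write $Y^{-1}(C) = Z^{-1}(B)$ with $B \in \cF_Z$; a short double-inclusion argument, again invoking surjectivity of $Z$ to realise each $z \in B$ as some $Z(\omega)$, then shows $f^{-1}(C) = B \in \cF_Z$. The roles of the hypotheses are thus clean: surjectivity of $Z$ is what lets us define $f$ on all of $F_Z$ and recover its preimages inside $\cF_Z$, while the singleton assumption on $\cF_Y$ is precisely what makes $f$ single-valued. (Surjectivity of $Y$ is not actually needed for this direction; it merely ensures that $f$ is onto $F_Y$.)
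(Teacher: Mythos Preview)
Your proposal is correct and follows essentially the same approach as the paper's proof: both build $f$ fibrewise via surjectivity of $Z$, use the singleton hypothesis on $\cF_Y$ together with $\sigma(Y)\subseteq\sigma(Z)$ to show $Y$ is constant on $Z$-fibres (the paper phrases this via the atom property of $\sigma(Z)$), and establish measurability by writing $Y^{-1}(C)=Z^{-1}(B)$ and identifying $f^{-1}(C)$ with $B$ using surjectivity of $Z$. Your remark that surjectivity of $Y$ is superfluous for \itref{aa1}$\Rightarrow$\itref{aa2} is also correct.
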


\begin{proof}
  See \appref{A}.
\end{proof}

In the sequel, whenever we invoke \lemref{measurability} we shall
implicitly assume that its conditions are satisfied.  In most of our
applications of \lemref{measurability}, both $(F_Y, \cF_Y)$ and
$(F_Z, \cF_Z)$ will be the real or extended real line equipped with
its Borel $\sigma$-algebra $\cB$.

We recall Kolmogorov's definition of conditional
expectation~\citep[p.~445]{bil:95}:
\begin{definition}[Conditional Expectation]
  Let $X$ be an integrable real-valued random variable defined on a
  probability space $(\Omega,\cA,\mP)$ and let $\cG \subseteq \cA$ be
  a $\sigma$-algebra.  A random variable $Y$ is called (a version of)
  the \emph{conditional expectation of $X$ given $\cG$}, and we write
  $Y=\mE(X \mid \cG)$, if
  \begin{enumerate}[label=(\roman{enumi})]
  \item
    \label{it:ce1}
    $Y$ is $\cG$-measurable; and
  \item\label{it:ce2} $Y$ is integrable and
    $\mE(X\mathbbm{1}_A)=\mE(Y\mathbbm{1}_A)$ for all $A \in \cG$.
  \end{enumerate}
\end{definition}
When $\cG=\sigma(Z)$ we may write $\mE(X \mid Z)$ for
$\mE(X \mid \cG)$.

It can be shown that $\mE(X \mid \cG)$ exists, and any two versions of
it are almost surely equal.  In particular, if $X$ is $\cG$-measurable
then $\mE(X \mid \cG)= X$ \rm{a.s.}\@ Thus for any integrable function
$f(X)$, $\mE\{f(X) \mid X\}= f(X)$ \rm{a.s.}\@ Also by \itref{ce2} for
$A=\Omega$, $\mE(X)=\mE\{\mE(X \mid \cG)\}$.  We will use this in the
form $\mE(X)= \mE\{\mE(X \mid Y)\}$.

\begin{definition}[Conditional Independence]
  \label{def:scidef}
  Let $X,Y,Z$ be random variables on $(\Omega,\cA,\mP)$.  We say that
  \emph{$X$ is (conditionally) independent of $Y$ given $Z$ (with
    respect to $\mP$)}, and write $\sind{X}{Y}{Z}\,\, [\mP]$, or just
  $\sind{X}{Y}{Z}$ when $\mP$ is understood, if:
  $$\mbox{For all $A_{X} \in
    \sigma(X)$}, \,\,\mE\left(\mathbbm{1}_{A_{X}} \mid Y,Z\right)=
  \mE\left(\mathbbm{1}_{A_{X}} \mid Z\right)\,\, \as{\mP}.$$
\end{definition}

We refer to the above property as \emph{stochastic conditional
  independence}; we use the subscript $s$ under $\cip$ ($\cip_{s}$) to
emphasize that we refer to this stochastic definition.

To prove the axioms, we need equivalent forms of the above definition.

\begin{proposition}
  \label{propo:stocci}
  Let $X,Y,Z$ be random variables on $(\Omega,\cA,\mP)$.  Then the
  following are equivalent.
  \begin{enumerate}[label=(\roman{enumi})]
  \item
    \label{it:scidef1}
    \label{it:ascidef2}
    \sind{X}{Y}{Z}.
  \item
    \label{it:scidef3}
    \label{it:ascidef3}
    For all real, bounded and measurable functions $f(X)$,
    \mbox{$\mE\{f(X) \mid Y,Z\}$}$=\mE\{f(X) \mid Z\}$ \rm{a.s.}
  \item
    \label{it:scidef4}
    \label{it:ascidef4}
    For all real, bounded and measurable functions $f(X), g(Y)$,
    \mbox{$\mE\{f(X)g(Y) \mid
      Z\}$}$=\mE\{f(X) \mid Z\}\mE\{g(Y)\mid Z\}$ \rm{a.s.}
  \item\label{it:scidef2}
    \label{it:ascidef1}
    For all $A_{X} \in \sigma(X)$ and all $A_{Y} \in \sigma(Y)$,
    \begin{math}
      \mE(\mathbbm{1}_{A_{X}\cap A_{Y}} \mid Z)=
      \mE(\mathbbm{1}_{A_{X}} \mid Z)\mE(\mathbbm{1}_{A_{Y}} \mid Z)
      \,\, \text{\rm{a.s.}}
    \end{math}
  \end{enumerate}
\end{proposition}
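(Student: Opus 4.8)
The plan is to establish the four-way equivalence as a single cycle of implications, $(i)\Rightarrow(ii)\Rightarrow(iii)\Rightarrow(iv)\Rightarrow(i)$. Three of the four links are routine: two are instances of the ``standard machine'' of measure theory (pass from indicators to simple functions by linearity, then to bounded measurable functions by dominated convergence) or mere specialization, and one is a direct tower-property computation. All the genuine content sits in the final link $(iv)\Rightarrow(i)$, which must recover a statement about conditioning on the pair $(Y,Z)$ from a factorization that only involves conditioning on $Z$.

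For $(i)\Rightarrow(ii)$ I would start from the hypothesis $\mE(\mathbbm{1}_{A_X}\mid Y,Z)=\mE(\mathbbm{1}_{A_X}\mid Z)$ for every $A_X\in\sigma(X)$. By linearity of conditional expectation this extends to every simple $\sigma(X)$-measurable function, and then, approximating an arbitrary bounded measurable $f(X)$ by simple functions and invoking the conditional dominated convergence theorem, to $\mE\{f(X)\mid Y,Z\}=\mE\{f(X)\mid Z\}$, which is $(ii)$. The link $(iii)\Rightarrow(iv)$ is immediate: taking $f=\mathbbm{1}_{A_X}$ and $g=\mathbbm{1}_{A_Y}$ and using $\mathbbm{1}_{A_X}\mathbbm{1}_{A_Y}=\mathbbm{1}_{A_X\cap A_Y}$ turns $(iii)$ into $(iv)$ verbatim.

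For $(ii)\Rightarrow(iii)$ I would compute $\mE\{f(X)g(Y)\mid Z\}$ by the tower property through the intermediate $\sigma$-algebra $\sigma(Y,Z)$: conditioning first on $(Y,Z)$ and pulling out the $\sigma(Y,Z)$-measurable factor $g(Y)$ gives $\mE[\,g(Y)\,\mE\{f(X)\mid Y,Z\}\mid Z\,]$; substituting $(ii)$ replaces the inner term by the $\sigma(Z)$-measurable $\mE\{f(X)\mid Z\}$, which then pulls out of the outer conditional expectation to leave $\mE\{f(X)\mid Z\}\,\mE\{g(Y)\mid Z\}$. Boundedness of $f$ and $g$ guarantees integrability of all products, so no further technical care is needed here.

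The main obstacle is $(iv)\Rightarrow(i)$. Here I would show that $\mE(\mathbbm{1}_{A_X}\mid Z)$ serves as a version of $\mE(\mathbbm{1}_{A_X}\mid Y,Z)$ by checking the two defining conditions of conditional expectation. Measurability is clear, since $\mE(\mathbbm{1}_{A_X}\mid Z)$ is $\sigma(Z)$-measurable and hence $\sigma(Y,Z)$-measurable. For the integral condition I must verify that $\mE[\mathbbm{1}_{A_X}\mathbbm{1}_B]=\mE[\mE(\mathbbm{1}_{A_X}\mid Z)\,\mathbbm{1}_B]$ for every $B\in\sigma(Y,Z)$. The class of rectangles $A_Y\cap A_Z$ with $A_Y\in\sigma(Y)$ and $A_Z\in\sigma(Z)$ is a $\pi$-system generating $\sigma(Y,Z)$, so it suffices to check the identity there: applying the tower property to pull out $\mathbbm{1}_{A_Z}$ and then invoking the factorization $(iv)$ shows both sides equal $\mE[\mathbbm{1}_{A_Z}\,\mE(\mathbbm{1}_{A_X}\mid Z)\,\mE(\mathbbm{1}_{A_Y}\mid Z)]$. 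Both sides of the desired identity are finite measures in $B$ that agree on this generating $\pi$-system, so by the uniqueness theorem for measures (Dynkin's $\pi$--$\lambda$ argument) they agree on all of $\sigma(Y,Z)$, establishing $(i)$. The only subtleties to watch are that the rectangles genuinely generate $\sigma(Y,Z)$ and that the two set functions are bona fide finite measures, both of which hold here.
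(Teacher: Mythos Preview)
Your proposal is correct and follows essentially the same cycle $(i)\Rightarrow(ii)\Rightarrow(iii)\Rightarrow(iv)\Rightarrow(i)$ as the paper's proof, with the same tower-property computation for $(ii)\Rightarrow(iii)$ and the same Dynkin/$\pi$-system argument for $(iv)\Rightarrow(i)$. The only cosmetic differences are that the paper uses the monotone class theorem with conditional monotone convergence for $(i)\Rightarrow(ii)$ (where you use the standard machine with dominated convergence), and for $(iv)\Rightarrow(i)$ the paper verifies the $d$-system axioms by hand rather than appealing to the packaged uniqueness-of-measures statement; both variants are equivalent.
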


\begin{proof}
  See \appref{A}.
\end{proof}

Henceforth we write $X \preceq Y$ when $X =f(Y)$ for some measurable
function $f$.

\begin{theorem}{(Axioms of Conditional Independence.)}
  \label{thm:saxioms}
  Let $X,Y,Z,W$ be random variables on $(\Omega,\cA,\mP)$.  Then the
  following properties hold (the descriptive terms are those assigned
  by~\citet{pea:88}):
  \begin{itemize}
  \item[P$1^{s}$.][Symmetry] \sind{X}{Y}{Z} $\Rightarrow$
    \sind{Y}{X}{Z}.
  \item[P$2^{s}$.] \sind{X}{Y}{Y}.
  \item[P$3^{s}$.][Decomposition] \sind{X}{Y}{Z} and $W \preceq Y$
    $\Rightarrow$ \sind{X}{W}{Z}.
  \item[P$4^{s}$.][Weak Union] \sind{X}{Y}{Z} and $W \preceq Y$
    $\Rightarrow$ \sind{X}{Y}{(W,Z)}.
  \item[P$5^{s}$.][Contraction] \sind{X}{Y}{Z} and \sind{X}{W}{(Y,Z)}
    $\Rightarrow$ \sind{X}{(Y,W)}{Z}.
  \end{itemize}
\end{theorem}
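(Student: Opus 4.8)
The plan is to derive each of the five properties from one of the equivalent characterizations supplied by \proporef{stocci}, selecting in each case the form that renders the property most transparent, and then to keep careful track of the generated $\sigma$-algebras. Throughout I fix an arbitrary $A_X \in \sigma(X)$ and verify the relevant almost-sure identity for that $A_X$; since only finitely many a.s.\ equalities are combined for each fixed $A_X$, the conclusion then holds for every $A_X$ as \defref{scidef} demands. The single structural fact used repeatedly is that $W \preceq Y$, i.e.\ $W=f(Y)$ for measurable $f$, forces $\sigma(W)\subseteq\sigma(Y)$, whence $\sigma(Y,W,Z)=\sigma(Y,Z)$; conditional expectations given these two identical $\sigma$-algebras coincide \emph{exactly}, not merely a.s., which is what lets the chaining go through cleanly.

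The first three properties are essentially immediate. For P$1^s$ (Symmetry) I would invoke the factorization characterization $\mE\{f(X)g(Y)\mid Z\}=\mE\{f(X)\mid Z\}\,\mE\{g(Y)\mid Z\}$, which is manifestly symmetric in $X$ and $Y$, so $\sind{X}{Y}{Z}$ and $\sind{Y}{X}{Z}$ are literally the same statement. For P$2^s$, since $\sigma(Y,Y)=\sigma(Y)$ the defining identity $\mE(\mathbbm{1}_{A_X}\mid Y,Y)=\mE(\mathbbm{1}_{A_X}\mid Y)$ is a tautology. For P$3^s$ (Decomposition), $\sigma(W)\subseteq\sigma(Y)$ means every $A_W\in\sigma(W)$ is an admissible choice in the indicator-factorization characterization applied to the hypothesis $\sind{X}{Y}{Z}$, yielding $\mE(\mathbbm{1}_{A_X\cap A_W}\mid Z)=\mE(\mathbbm{1}_{A_X}\mid Z)\,\mE(\mathbbm{1}_{A_W}\mid Z)$, which is exactly $\sind{X}{W}{Z}$.

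The remaining two require chaining conditional-expectation identities, and P$5^s$ is where the bookkeeping concentrates. For P$4^s$ (Weak Union) I would evaluate the two sides of $\sind{X}{Y}{(W,Z)}$ separately: on the conditioning set $(Y,W,Z)$ the equality $\sigma(Y,W,Z)=\sigma(Y,Z)$ together with the hypothesis gives $\mE(\mathbbm{1}_{A_X}\mid Y,W,Z)=\mE(\mathbbm{1}_{A_X}\mid Z)$ a.s., while P$3^s$, now established, yields $\sind{X}{W}{Z}$ and hence $\mE(\mathbbm{1}_{A_X}\mid W,Z)=\mE(\mathbbm{1}_{A_X}\mid Z)$ a.s.; equating the two sides proves the claim. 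For P$5^s$ (Contraction) I would read the second hypothesis $\sind{X}{W}{(Y,Z)}$ as $\mE(\mathbbm{1}_{A_X}\mid W,Y,Z)=\mE(\mathbbm{1}_{A_X}\mid Y,Z)$ a.s.\ and the first hypothesis $\sind{X}{Y}{Z}$ as $\mE(\mathbbm{1}_{A_X}\mid Y,Z)=\mE(\mathbbm{1}_{A_X}\mid Z)$ a.s.; composing these two a.s.\ identities and using $\sigma(Y,W,Z)=\sigma((Y,W),Z)$ delivers $\mE(\mathbbm{1}_{A_X}\mid (Y,W),Z)=\mE(\mathbbm{1}_{A_X}\mid Z)$ a.s., i.e.\ $\sind{X}{(Y,W)}{Z}$.

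The main obstacle is not any single deep step but the disciplined handling of the almost-sure qualifiers: each conditional expectation is defined only up to a null set, so I must ensure the discarded null sets depend only on the fixed $A_X$ and the finitely many identities being combined, never on an uncountable family of events. Because \defref{scidef} places the quantifier over $A_X$ outside the a.s.\ statement, and because each of P$3^s$--P$5^s$ chains at most two such identities, this is harmless here; but it is precisely the point at which a careless version of the argument would break down.
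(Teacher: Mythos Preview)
Your proposal is correct and, for P$1^s$, P$4^s$, and P$5^s$, essentially identical to the paper's argument. The two places where you diverge are P$2^s$ and P$3^s$. For P$2^s$ the paper verifies the factorization form \proporef{stocci}\itref{scidef4}, pulling $g(Y)$ outside $\mE\{f(X)g(Y)\mid Y\}$, whereas you observe directly that $\sigma(Y,Y)=\sigma(Y)$ makes the defining identity a tautology; your route is shorter. For P$3^s$ the paper uses the tower property, conditioning first on $(Y,Z)$ and then on $(W,Z)$, while you instead restrict the symmetric indicator factorization \proporef{stocci}\itref{scidef2} from $A_Y\in\sigma(Y)$ to $A_W\in\sigma(W)\subseteq\sigma(Y)$; this avoids the tower step entirely at the cost of appealing to a different equivalent characterization. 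Both variants are clean, and your careful remark about the a.s.\ bookkeeping is a point the paper leaves implicit.
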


\begin{proof}\quad\\
  P$1^{s}$.  Follows directly from \proporef{stocci}\ \itref{scidef2}.
  
  \noindent P$2^{s}$.  Let $f(X),g(Y)$ be real, bounded and measurable
  functions.  Then
  \begin{align*}
    \mE\left\{f(X)g(Y) \mid Y\right\}&= g(Y)\, \mE\left\{f(X) \mid Y\right\} \,\, \rm{a.s.} \\
                                     &= \mE\left\{f(X) \mid Y\right\} \mE\left\{g(Y) \mid Y\right\}
                                       \,\, \rm{a.s.}
  \end{align*}
  which proves P$2^{s}$.
  
  \noindent P$3^{s}$.  Let $f(X)$ be a real, bounded and measurable
  function.  Since $W \preceq Y$, it follows from
  \lemref{measurability} that $\sigma(W)\subseteq \sigma(Y)$ and thus
  $\sigma(W,Z)\subseteq \sigma(Y,Z)$.  Then
  \begin{align*}
    \mE\left\{f(X) \mid W,Z\right\}&= \mE\left[\mE\left\{f(X) \mid Y,Z\right\} \mid W,Z\right] \,\, \rm{a.s.} \\
                                   &= \mE\left[\mE\left\{f(X) \mid Z\right\} \mid W,Z\right] \,\, \text{\rm{a.s.} since \sind{X}{Y}{Z}} \\
                                   &= \mE\left\{f(X) \mid Z\right\} \,\, \rm{a.s.}
  \end{align*}
  which proves P$3^{s}$.
  
  \noindent P$4^{s}$.  Let $f(X)$ be a real, bounded and measurable
  function.  Since $W \preceq Y$, it follows from
  \lemref{measurability} that $\sigma(W)\subseteq \sigma(Y)$ and thus
  $\sigma(Y,W,Z)=\sigma(Y,Z)$.  Then
  \begin{align*}
    \mE\left\{f(X)\mid Y,W,Z\right\} &= \mE\left\{f(X)\mid Y,Z\right\}\,\,\text{\rm{a.s.}} \\
                                     &= \mE\left\{f(X) \mid Z\right\} \,\, \text{\rm{a.s.} since \sind{X}{Y}{Z}} \\
                                     &= \mE\left\{f(X) \mid W,Z\right\} \,\, \text{\rm{a.s.} by
                                       P$3^{s}$}
  \end{align*}
  which proves P$4^{s}$.
  
  \noindent P$5^{s}$.  Let $f(X)$ be a real, bounded and measurable
  function.  Then
  \begin{align*}
    \mE\left\{f(X) \mid Y,W,Z\right\} &= \mE\left\{f(X) \mid Y,Z\right\} \,\, \text{\rm{a.s.} since \sind{X}{W}{(Y,Z)}}\\
                                      &= \mE\left\{f(X)\mid Z\right\} \,\, \text{\rm{a.s.} since
                                        \sind{X}{Y}{Z}}
  \end{align*}
  which proves P$5^{s}$.
\end{proof}

In \thmref{saxioms} we have shown that stochastic conditional
independence satisfies the axioms of a separoid.  Denoting by $V$ the
set of all random variables defined on the probability space
$(\Omega,\cA, \mP)$ and equipping $V$ with the quasiorder $\preceq$,
$(V, \preceq)$ becomes a join semilattice and the triple
$(V,\preceq,\cip)$ is then a separoid.

Using \scis\ in an axiomatic way, we can mechanically prove many
useful \cis\ results.

\begin{ex}
  Let $X,Y,Z$ be random variables on $(\Omega,\cA,\mP)$.  Then
  \sind{X}{Y}{Z} implies that \sind{(X,Z)}{Y}{Z}.
\end{ex}

\begin{proof}
  Applying P$1^{s}$ to \sind{X}{Y}{Z}, we obtain
  \begin{equation}
    \label{eq:sciup1}
    \sind{Y}{X}{Z}
  \end{equation}
  By P$2^{s}$, we obtain
  \begin{equation}
    \label{eq:sciup2}
    \sind{Y}{(X,Z)}{(X,Z)}.
  \end{equation}
  Applying P$3^{s}$ to~\eqref{sciup2}, we obtain
  \begin{equation}
    \label{eq:sciup3}
    \sind{Y}{Z}{(X,Z)}
  \end{equation}
  and applying P$5^{s}$ to~\eqref{sciup1} and~\eqref{sciup3}, we
  obtain
  \begin{equation}
    \label{eq:sciup4}
    \sind{Y}{(X,Z)}{Z}.
  \end{equation}
  The result follows by applying P$1^{s}$ to~\eqref{sciup4}.
\end{proof}

\begin{ex}[Nearest Neighbour Property of a Markov Chain]

  \noindent Let $X_1,X_2, X_3,X_4,X_5$ be random variables on
  $(\Omega,\cA,\mP)$ and suppose that
  \begin{enumerate}[label=(\roman{enumi})]
  \item\label{it:nn1} \sind{X_3}{X_1}{X_2}, \
  \item\label{it:nn2} \sind{X_4}{(X_1,X_2)}{X_3},\
  \item\label{it:nn3} \sind{X_5}{(X_1,X_2,X_3)}{X_4}.  \
  \end{enumerate}
  Then \sind{X_3}{(X_1,X_5)}{(X_2,X_4)}.
\end{ex}

\begin{proof}
  See~\citet{daw:79a}
\end{proof}

\subsection{Variation conditional independence as a separoid}

\Vci, which concerns solely non-stochastic variables, is another,
indeed much simpler, example of a separoid.

Let $\cS$ be a set with elements denoted by \eg\ $\sigma$, and let $V$
be the set of all functions with domain $\cS$ and arbitrary range
space.  The elements of $V$ will be denoted by \eg\ $X,Y,\ldots$.  We
do not require any additional properties or structure such as a
probability measure, measurability, \etc\@ We write $X \preceq Y$ to
denote that $X$ is a function of $Y$, \ie\
$Y(\sigma_1)=Y(\sigma_2) \Rightarrow X(\sigma_1)=X(\sigma_2)$.  The
equivalence classes for this quasiorder correspond to partitions of
$\cS$.  Then $(V,\preceq)$ forms a join semilattice, with join
$X \vee Y$ the function $(X,Y) \in V$.

The \emph{(unconditional) image of $Y$} is
$R(Y) := Y(\cS) = \{Y(\sigma):\sigma\in \cS\}$.  The \emph{conditional
  image of $X$, given $Y=y$} is
$R(X \mid Y=y):= \{X(\sigma): \sigma \in \cS, Y(\sigma)=y \}$.  For
simplicity of notation we will sometimes write $R(X \mid y)$ instead
of $R(X \mid Y=y)$, and $R(X \mid Y)$ for the function
$R(X \mid Y= .)$.

\begin{definition}
  We say that \textit{$X$ is variation (conditionally) independent of
    $Y$ given $Z$} (on $\Omega$) and write \vind{X}{Y}{Z} $[\cS]$ (or,
  if $\cS$ is understood, just \vind{X}{Y}{Z}) if:
$$\mbox{for any } (y,z) \in R(Y,Z),\,\, R(X \mid y,z)=R(X \mid z).$$
\end{definition}

We use the subscript $v$ under $\cip$ ($\cip_{v}$) to emphasize that
we refer to the above non-stochastic definition.  In parallel with the
stochastic case, we have equivalent forms of the above definition.

\begin{proposition}
  \label{propo:vieq}
  The following are equivalent.
  \begin{enumerate}[label=(\roman{enumi})]
  \item
    \label{it:vid1}
    \label{it:avid1}
    \vind{X}{Y}{Z}.  %
  \item\label{it:vid2}
    \label{it:avid2}
    The function $R(X \mid Y,Z)$ of $(Y,Z)$ is a function of $Z$
    alone.
  \item \label{it:vid3}
    \label{it:avid3}
    For any $z \in R(Z)$,
    $R(X,Y \mid z)=R(X \mid z)\times R(Y \mid z)$.
  \end{enumerate}
\end{proposition}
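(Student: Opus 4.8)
The plan is to prove the two equivalences (i)\,$\Leftrightarrow$\,(ii) and (i)\,$\Leftrightarrow$\,(iii), using (i) as the hub, by direct manipulation of the conditional images, all of which are just images of fibres of functions on $\cS$. Two elementary observations, valid for arbitrary $X,Y,Z$, do all the work. The first is the decomposition identity
\[
  R(X \mid z) \;=\; \bigcup_{y\,:\,(y,z)\in R(Y,Z)} R(X \mid y,z),
\]
which holds because the fibre $\{\sigma : Z(\sigma)=z\}$ is the disjoint union, over $y\in R(Y\mid z)$, of the sub-fibres $\{\sigma : Y(\sigma)=y,\,Z(\sigma)=z\}$, and each $R(\cdot\mid\cdot)$ is simply the image of $X$ on the corresponding fibre. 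Here I would record the harmless remark that $y\in R(Y\mid z)$ is the same condition as $(y,z)\in R(Y,Z)$. The second observation is the automatic inclusion $R(X,Y \mid z) \subseteq R(X \mid z)\times R(Y \mid z)$, since any pair $(X(\sigma),Y(\sigma))$ arising on the fibre over $z$ has each coordinate in the respective marginal conditional image.

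For (i)\,$\Leftrightarrow$\,(ii): one direction is immediate, taking the witness $h(z):=R(X\mid z)$, so that the defining equality $R(X\mid y,z)=R(X\mid z)$ of \vind{X}{Y}{Z} exhibits $R(X\mid Y,Z)$ as depending on $z$ alone. For the converse, if $R(X\mid y,z)=h(z)$ for all $(y,z)\in R(Y,Z)$, then substituting into the decomposition identity collapses the union (over a nonempty index set, since $z\in R(Z)$) to the single set $h(z)$, giving $R(X\mid z)=h(z)=R(X\mid y,z)$, which is exactly (i).

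For (i)\,$\Leftrightarrow$\,(iii): because one inclusion in (iii) is the automatic one noted above, the entire content is the reverse inclusion $R(X\mid z)\times R(Y\mid z)\subseteq R(X,Y\mid z)$. Assuming (i), I would take $x\in R(X\mid z)$ and $y\in R(Y\mid z)$; the latter gives $(y,z)\in R(Y,Z)$, so (i) yields $R(X\mid y,z)=R(X\mid z)\ni x$, producing a single $\sigma$ with $X(\sigma)=x$, $Y(\sigma)=y$, $Z(\sigma)=z$, whence $(x,y)\in R(X,Y\mid z)$. Conversely, assuming (iii), fix $(y,z)\in R(Y,Z)$: the inclusion $R(X\mid y,z)\subseteq R(X\mid z)$ is trivial, while for $x\in R(X\mid z)$ the product form of (iii) places $(x,y)$ in $R(X,Y\mid z)$, furnishing a common witness $\sigma$ and hence $x\in R(X\mid y,z)$, so $R(X\mid y,z)=R(X\mid z)$.

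I do not expect a genuine obstacle; the proof is entirely set-theoretic. The only points demanding care are quantifier bookkeeping—keeping $(y,z)\in R(Y,Z)$ and $y\in R(Y\mid z)$ aligned, and noticing that each nontrivial equality reduces to a single inclusion once the automatic one is recorded—and the correct reading of (ii): ``a function of $Z$ alone'' must be understood in the $\preceq$ sense, i.e.\ the existence of some $h$ with $R(X\mid y,z)=h(z)$ on $R(Y,Z)$, which is precisely what the decomposition argument consumes.
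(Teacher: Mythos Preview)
Your proposal is correct and follows essentially the same route as the paper. Both proofs hinge on the decomposition identity $R(X\mid z)=\bigcup_{y\in R(Y\mid z)} R(X\mid y,z)$ for the (i)\,$\Leftrightarrow$\,(ii) direction, and on elementary fibre/witness chasing for (i)\,$\Leftrightarrow$\,(iii); the only cosmetic difference is that the paper writes (i)\,$\Rightarrow$\,(iii) and (iii)\,$\Rightarrow$\,(i) as chains of set equalities (e.g.\ using $R(X\mid y,z)=\{x:(x,y)\in R(X,Y\mid z)\}$), whereas you isolate the automatic inclusion and prove only the nontrivial one by exhibiting a common witness $\sigma$.
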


\begin{proof}
  See \appref{A}.
\end{proof}

\begin{proposition}
  \label{propo:eqlemma}
  The following are equivalent.
  \begin{enumerate}[label=(\roman{enumi})]
  \item
    \label{it:vim1}
    \label{it:avim1}
    $W \preceq Y$.
  \item
    \label{it:vim2}
    \label{it:avim2}
    there exists $f: R(Y) \rightarrow R(W)$ such that $W=f(Y)$.
  \end{enumerate}
\end{proposition}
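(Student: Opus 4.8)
The plan is to prove the two implications separately, the content lying entirely in (i) $\Rightarrow$ (ii). The reverse direction (ii) $\Rightarrow$ (i) is immediate: if $W = f(Y)$ for some $f : R(Y) \rightarrow R(W)$, then whenever $Y(\sigma_1) = Y(\sigma_2)$ we have $W(\sigma_1) = f(Y(\sigma_1)) = f(Y(\sigma_2)) = W(\sigma_2)$, which is exactly the defining condition of $W \preceq Y$.

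For (i) $\Rightarrow$ (ii), assuming $W \preceq Y$, I would construct $f$ by choosing a representative in each fibre of $Y$. Explicitly, for each $y \in R(Y) = Y(\cS)$ the fibre $\{\sigma \in \cS : Y(\sigma) = y\}$ is nonempty, so I may pick some $\sigma_y$ in it and set $f(y) := W(\sigma_y)$. Since $W(\sigma_y) \in R(W)$, this does define a map $f : R(Y) \rightarrow R(W)$ with the asserted codomain. It then remains to verify $W = f(Y)$, i.e.\ that $W(\sigma) = f(Y(\sigma))$ for every $\sigma \in \cS$: writing $y = Y(\sigma)$, both $\sigma$ and $\sigma_y$ lie in the fibre of $Y$ over $y$, so $Y(\sigma) = Y(\sigma_y)$, and the hypothesis $W \preceq Y$ forces $W(\sigma) = W(\sigma_y) = f(y) = f(Y(\sigma))$, as required.

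The only point requiring care---and the reason the hypothesis is invoked exactly once---is that $f$ be well defined, equivalently that $W$ be constant on each fibre of $Y$; this is precisely what $W \preceq Y$ asserts, so the computation above simultaneously establishes well-definedness and the identity $W = f(Y)$. I expect no genuine obstacle: unlike the stochastic analogue in \lemref{measurability}, no measurability or $\sigma$-algebra condition intervenes here, so the argument is purely set-theoretic and the choice of representatives $\sigma_y$ requires no extra structure on $\cS$.
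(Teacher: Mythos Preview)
Your proposal is correct and follows essentially the same argument as the paper: both proofs pick a representative $\sigma_y \in Y^{-1}(\{y\})$ for each $y \in R(Y)$, define $f(y) := W(\sigma_y)$, and verify $W = f(Y)$ by invoking the hypothesis $W \preceq Y$ to conclude $W(\sigma) = W(\sigma_y)$ whenever $Y(\sigma) = y$; the reverse implication is dismissed in both as immediate from the definition of $\preceq$.
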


\begin{proof}
  See \appref{A}.
\end{proof}

\begin{theorem}{(Axioms of variation independence.)}
  \label{thm:axiomsv}
  Let $X,Y,Z,W$ be functions on $\cS$.  Then the following properties
  hold.
  \begin{itemize}
  \item[P$1^{v}$.] \vind{X}{Y}{Z} $\Rightarrow$ \vind{Y}{X}{Z}.
  \item[P$2^{v}$.] \vind{X}{Y}{Y}.
  \item[P$3^{v}$.] \vind{X}{Y}{Z} and $W \preceq Y$ $\Rightarrow$
    \vind{X}{W}{Z}.
  \item[P$4^{v}$.] \vind{X}{Y}{Z} and $W \preceq Y$ $\Rightarrow$
    \vind{X}{Y}{(W,Z)}.
  \item[P$5^{v}$.] \vind{X}{Y}{Z} and \vind{X}{W}{(Y,Z)} $\Rightarrow$
    \vind{X}{Y}{(W,Z)}.
  \end{itemize}

\end{theorem}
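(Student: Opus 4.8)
The plan is to verify each of the five properties directly from the definition of variation conditional independence, using the equivalent formulations of \proporef{vieq} and the characterization of $\preceq$ in \proporef{eqlemma}. The key computational fact I would isolate first is a \emph{fibre-union identity}: whenever $W \preceq Y$, so that $W = f(Y)$ for some $f \colon R(Y) \to R(W)$ by \proporef{eqlemma}, then for every realizable $(w,z) \in R(W,Z)$ one has $R(X \mid w,z) = \bigcup_{y} R(X \mid y,z)$, where the union runs over those $y \in f^{-1}(w)$ with $(y,z) \in R(Y,Z)$. This holds because $W(\sigma)=w$ is equivalent to $Y(\sigma) \in f^{-1}(w)$, so the fibre $\{\sigma : W(\sigma)=w,\, Z(\sigma)=z\}$ splits as the union of the fibres $\{\sigma : Y(\sigma)=y,\, Z(\sigma)=z\}$. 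This identity is the engine linking a variable to any coarsening of it, and it drives P$3^v$ and P$4^v$.

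The first two axioms are essentially free. For P$1^v$ I would appeal to the product form \proporef{vieq}\,\itref{vid3}: $\vind{X}{Y}{Z}$ holds iff $R(X,Y \mid z) = R(X \mid z)\times R(Y \mid z)$ for every $z \in R(Z)$, and the right-hand side is symmetric in $X$ and $Y$ up to the harmless swap of coordinates, so $\vind{Y}{X}{Z}$ follows at once. P$2^v$ is a triviality: in $R(X \mid Y=y,\,Y=y)$ the two constraints coincide, so conditioning on $(Y,Y)$ is the same as conditioning on $Y$ and the defining equality reduces to $R(X\mid y)=R(X\mid y)$; equivalently, via \proporef{vieq}\,\itref{vid2}, $R(X\mid Y,Y)$ is trivially a function of $Y$.

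For P$3^v$, given $\vind{X}{Y}{Z}$ and $W \preceq Y$, I would fix a realizable $(w,z)$ and apply the fibre-union identity: each $R(X\mid y,z)$ in the union equals $R(X\mid z)$ by hypothesis, and at least one such $y$ exists because $(w,z)\in R(W,Z)$, so the union collapses to $R(X\mid z)$, giving $\vind{X}{W}{Z}$. For P$4^v$ I would show both sides of the required equality equal $R(X\mid z)$: on one hand $R(X \mid y,w,z)=R(X\mid y,z)=R(X\mid z)$, since in a realizable tuple $W=w$ is already forced by $Y=y$ and then $\vind{X}{Y}{Z}$ applies; on the other hand $R(X\mid w,z)=R(X\mid z)$ by the P$3^v$ computation just made. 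Finally, for P$5^v$ (contraction, whose intended conclusion is $\vind{X}{(Y,W)}{Z}$ rather than the displayed repetition of P$4^v$) the argument is a one-line chaining: for any realizable $(y,w,z)$, the hypothesis $\vind{X}{W}{(Y,Z)}$ gives $R(X\mid y,w,z)=R(X\mid y,z)$, and $\vind{X}{Y}{Z}$ gives $R(X\mid y,z)=R(X\mid z)$, whence $R(X\mid y,w,z)=R(X\mid z)$.

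I expect the only genuine obstacle to be the bookkeeping of which combined values are actually attained, that is, keeping every union and conditional image ranging over the correct realizable index sets $R(Y,Z)$, $R(W,Z)$, $R(Y,W,Z)$, and so on. The danger is to assert the fibre-union identity, or the collapse to $R(X\mid z)$, while inadvertently admitting an empty or unattainable fibre, which could alter the set on the right. Making explicit the nonemptiness of at least one compatible fibre, guaranteed precisely because we condition only on realizable tuples, removes this difficulty, after which each step is a routine set manipulation.
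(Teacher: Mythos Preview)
Your proposal is correct and follows essentially the same route as the paper: P$1^{v}$ via the symmetric product form \proporef{vieq}\,\itref{vid3}, P$3^{v}$ via exactly your fibre-union identity, P$4^{v}$ by collapsing the redundant $W$-constraint (the paper then invokes \proporef{vieq}\,\itref{vid2} rather than computing $R(X\mid w,z)$ separately, but the content is identical), and P$5^{v}$ by the same two-step chain $R(X\mid y,w,z)=R(X\mid y,z)=R(X\mid z)$. Your reading of P$5^{v}$ is also right: the paper's displayed conclusion is a typo for $\vind{X}{(Y,W)}{Z}$, and its proof in fact establishes precisely that.
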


\begin{proof}\quad\\
  P$1^{v}$.  Follows directly from \proporef{vieq}.

  \noindent P$2^{v}$.  Let $x \in R(X)$.  Then
  \begin{align*}
    R(X,Y \mid y)&:=\{(X(\sigma),Y(\sigma)): \sigma \in \cS, Y(\sigma)=y\} \\
                 &= \{(X(\sigma),y): \sigma \in \cS, Y(\sigma)=y\} \\
                 &= R(X \mid y) \times \{y\} \\
                 &= R(X \mid y) \times R(Y \mid y)
  \end{align*}
  which proves P$2^{v}$.
  
  \noindent P$3^{v}$.  Let \vind{X}{Y}{Z} and $W \preceq Y$.  Since
  $W \preceq Y$, it follows from \proporef{eqlemma} that there exists
  $f: Y(\cS) \rightarrow W(\cS)$ such that $W(\sigma)=f(Y(\sigma))$.
  For any $(w,z) \in R(W,Z)$,
  \begin{align*}
    R(X \mid w,z)&=\{X(\sigma): \sigma \in \cS, (W,Z)(\sigma)=(w,z)\} \\
                 &=\{X(\sigma): \sigma \in \cS, f(Y(\sigma))=w, Z(\sigma)=z\} \\
                 &=\{X(\sigma): \sigma \in \cS, Y(\sigma) \in f^{-1}(w), Z(\sigma)=z\} \\
                 &= \bigcup_{y \in f^{-1}(w)} \{X(\sigma): \sigma \in \cS, Y(\sigma)=y, Z(\sigma)=z\} \\
                 &= \bigcup_{y \in f^{-1}(w)} R(X \mid y,z) \\
                 &= \bigcup_{y \in f^{-1}(w)} R(X \mid z) \,\, \text{since \vind{X}{Y}{Z}} \\
                 &= R(X \mid z)
  \end{align*}
  which proves P$3^{v}$.
  
  \noindent P$4^{v}$.  Let \vind{X}{Y}{Z} and $W \preceq Y$.  Since
  $W \preceq Y$, it follows from \proporef{eqlemma} that there exists
  $f: Y(\cS) \rightarrow W(\cS)$ such that $W(\sigma)=f(Y(\sigma))$.
  Now let $(y,z,w) \in R(Y,Z,W)$.  Then $f(y)=w$ and we have that:
  \begin{align*}
    R(X \mid y,z,w)&:=\{ X(\sigma): \sigma \in \cS, (Y,Z,W)(\sigma)=(y,z,w)\} \\
                   &=\{ X(\sigma): \sigma \in \cS, Y(\sigma)=y, Z(\sigma)=z, f(Y(\sigma))=w\} \\
                   &=\{ X(\sigma): \sigma \in \cS, Y(\sigma)=y, Z(\sigma)=z \} \,\,
                     \text{since $Y(\sigma)=y \Rightarrow f(Y(\sigma))=w$}\\
                   &= R(X \mid y,z) \\
                   &= R(X \mid z) \,\, \text{since \vind{X}{Y}{Z}}.
  \end{align*}
  Now P$4^{v}$ follows from \proporef{vieq}\itref{vid2}.
  
  \noindent P$5^{v}$.  Let \vind{X}{Y}{Z} and \vind{X}{W}{(Y,Z)}.
  Also let $(y,w,z) \in R(Y,W,Z)$.  Then
  \begin{align*}
    R(X \mid y,w,z)&=R(X \mid y,z) \,\, \text{since \vind{X}{W}{(Y,Z)}} \\
                   &=R(X \mid z) \,\, \text{since \vind{X}{Y}{Z}}
  \end{align*}
  which proves P$5^{v}$.
\end{proof}

In the above theorem we have shown that variation independence
satisfies the axioms of a separoid.  Indeed---and in contrast with
stochastic conditional independence---variation independence also
satisfies the axioms of a strong separoid~\citep{daw:01b}.

\section{Extended conditional independence}
\label{sec:3}

There is a basic intuitive similarity between the notions of
stochastic conditional independence and variation independence.  A
statement like \sind{X}{Y}{Z} for stochastic variables, or
\vind{X}{Y}{Z} for non-stochastic variables, reflects our informal
understanding that, having already obtained information about $Z$,
further information about $Y$ will not affect the uncertainty
(suitably understood) about $X$.  Building on this intuitive
interpretation, one can extend \sind{X}{Y}{Z} to the case that one or
both of $Y$ and $Z$ involve non-stochastic variables, such as
parameters or regime indicators.  Such an extended version of
conditional independence would embrace the notions of ancillarity,
sufficiency, causality, \etc

The first authors to consider sufficiency in a general abstract
setting were~\citet{hal:49}.  Removing any assumption such as the
existence of a probability mass function or a density with respect to
a common measure, sufficiency is defined as follows.

\begin{definition}[Sufficiency]
  \label{def:sufficiency2}
  Consider a a random variable $X$, and a family
  $\cP = \{\mP_\theta\}$ of probability distributions for $X$, indexed
  by $\theta \in \pmb{\Theta}$.  A statistic $T=T({X})$ is
  \emph{sufficient} for $\cP$, or for $\theta$, if for any real,
  bounded and measurable function $h$, there exists a function $w(T)$
  such that, for any $\theta \in \Theta$,
  \begin{displaymath}
    \mE_{\theta}\{h(X)\mid T\}=w(T) \,\, \text{\as{\mP_{\theta}}}.
  \end{displaymath}
\end{definition}

Interpreting the definition carefully, we require that, for any real,
bounded and measurable $h(X)$, there exist a single function $w(T)$
that serves as a version of the conditional expectation
$\mE_{\theta}\{h(X) \mid T\}$ under $\mP_{\theta}$, simultaneously for
all $\theta \in \Theta$.

In the \dts\, we consider, instead of the parameter space
$\pmb{\Theta}$, a space $\cS$ of different regimes, typically
$\sigma$,\footnote{The regime indicator $\sigma$ is not to be confused
  with the $\sigma$-algebra generated by $X$, denoted by $\sigma(X)$.}
under which data can be observed.  We thus consider a family
$\cP = \{\mP_\sigma:\sigma\in\cS\}$ of probability measures over a
suitable space $(\Omega,\cA)$.  A stochastic variable, such as
$X: (\Omega,\sigma(X)) \rightarrow (\mR, \cB)$, can have different
distributions under the different regimes $\sigma \in \cS$.  We write
$\mE_\sigma(X \mid Y)$ to denote a version of the conditional
expectation $\mE(X \mid Y)$ under regime $\sigma$: this is defined
\as{\mP_\sigma}.  We also consider non-stochastic variables, functions
defined on $\cS$, which we term \emph{decision variables}.  Decision
variables give us full or partial information about which regime is
operating.  We denote by $\Sigma$ the identity function on $\cS$.

We aim to extend \defref{sufficiency2} to express a statement like
\ind{X}{(Y,\Theta)}{(Z,\Phi)}, where $X,Y,Z$ are stochastic variables
and $\Theta,\Phi$ decision variables.  In order to formalise such a
statement, we first describe what we would like a conditional
independence statement like \ind{X}{\Theta}{\Phi} to reflect
intuitively: that the distribution of $X$, given the information
carried by $(\Theta,\Phi)$ about which regime is operating, is in fact
fully determined by the value of $\Phi$ alone.

However, in order for this to make sense, we must assume that $\Phi$
and $\Theta$ together do fully determine the regime $\sigma \in S$
operating and, thus, the distribution of $X$ in this regime.
Formally, we require that the function $(\Phi,\Theta)$ defined on
$\cS$ be a surjection.  In this case we say that $\Phi$ and $\Theta$
are \emph{complementary} (on $\cS$), or that \emph{$\Theta$ is
  complementary to $\Phi$} (on $\cS$).  This leads to the following
definition.

\begin{definition}
  \label{def:nstocdef1}
  Let $X$, $Y$ and $Z$ be stochastic variables, and let $\Phi$ and
  $\Theta$ be complementary decision variables.  We say that \emph{$X$
    is (conditionally) independent of $(Y,\Theta)$ given $(Z,\Phi)$}
  and write \ind{X}{(Y,\Theta)}{(Z,\Phi)} if, for all
  $\phi \in \Phi(\cS)$ and all real, bounded and measurable functions
  $h$, there exists a function $w_{\phi}(Z)$ such that, for all
  $\sigma \in \Phi^{-1}(\phi)$,
  \begin{equation}
    \label{eq:nstocdefeq}
    \mE_\sigma\{h(X)\mid Y,Z\}=w_{\phi}(Z) \,\, \text{\as{\mP_\sigma}}.
  \end{equation}
\end{definition}

We will refer to this definition of conditional independence as
\emph{extended conditional independence}.  Note that the only
important property of $\Theta$ in the above definition is that it is
complementary to $\Phi$; beyond this, the actual form of $\Theta$
becomes irrelevant.  Henceforth, we will write down a conditional
independence statement involving two decision variables only when the
two variables are complementary.

\begin{remark}
  \label{rem:nstoc1}
  Assume that \ind{X}{(Y,\Theta)}{(Z,\Phi)} and consider $w_{\phi}(Z)$
  as in \defref{nstocdef1}.  Then
  \begin{align*}
    \mE_\sigma\left\{ h(X) \mid Z \right \} &= \mE_\sigma\left[
                                              \mE_\sigma \left\{ h(X) \mid Y,Z \right\} \mid Z \right]
                                              \,\, \text{a.s.\ $\mathbb{P}_\sigma$} \\
                                            &=\mE_\sigma \left\{ w_{\phi}(Z) \mid Z \right \} \,\, \text{a.s.\ $\mathbb{P}_\sigma$} \\
                                            &=w_{\phi}(Z) \,\, \text{a.s.\ $\mathbb{P}_\sigma$.}
  \end{align*}
  Thus $w_{\phi}(Z)$ also serves as a version of
  $\mE_\sigma\{h(X) \mid Z\}$ for all $\sigma \in \Phi^{-1}(\phi)$.
\end{remark}

The following example shows that, even when~\eqref{nstocdefeq} holds,
we can not use just any version of the conditional expectation in one
regime to serve as a version of the conditional expectation in another
regime.  This is because two versions of the conditional expectation
can differ on a set of probability zero, but a set of probability zero
in one regime could have positive probability in another.
\begin{ex}
  \label{ex:nstocdef}
  Let the regime space be $S=\{\sigma_{0},\sigma_{1}\}$, let binary
  variable $T$ represent the treatment taken (where $T=0$ denotes
  placebo and $T=1$ denotes active treatment), and let $X$ be an
  outcome of interest.  Regime $\sigma_t$ ($t=0,1$) represents the
  interventional regime under treatment $t$: in particular,
  $\mP_{\sigma_{j}}(T=j)=1$.

  We consider the situation where the treatment is ineffective, so
  that $X$ has the same distribution in both regimes.  We then have
  $\ind{X}{\Sigma}{T}$ --- since, for any function $h(X)$, we can take
  as $\mE\{h(X) \mid \sigma, T\}$, for both $\sigma =0$ and
  $\sigma=1$, the (constant) common expectation of $h(X)$ in both
  regimes.\footnote{Indeed, this encapsulates the still stronger
    property $\indo{X}{(\Sigma, T)}$.}

  In particular, suppose $X$ has expectation 1 in both regimes.  Then
  the function $w(T) \equiv 1$ is a version both of
  $\mE\{h(X) \mid \sigma_0, T\}$ and of
  $\mE\{h(X) \mid \sigma_1, T\}$.  That is,
  $\mE_{\sigma_{0}}(X \mid T)=1$ \as{\mP_{\sigma_{0}}}, and
  $\mE_{\sigma_{1}}(X \mid T)=1$ \as{\mP_{\sigma_{1}}}.

  Now consider the functions
  \begin{equation*}
    k_0(t) = 1-t \qquad \text{and} \qquad  k_1(t) = t.
  \end{equation*}
  
  We can see that $k_0(T)=w(T)$ \as{\mP_{\sigma_{0}}}, so that
  $k_0(T)$ is a version of $\mE\{h(X) \mid \sigma_0, T\}$; similarly,
  $k_1(T)$ is a version of $\mE\{h(X) \mid \sigma_1, T\}$.  However,
  almost surely, under both $\mP_{\sigma_{0}}$ and $\mP_{\sigma_{1}}$,
  $k_{0}(T) \neq k_{1}(T)$.  Hence neither of these variables can
  replace $w(T)$ in supplying a version of $\mE_\sigma(X \mid T)$
  simultaneously in both regimes.
\end{ex}
 
We now introduce some equivalent versions of \defref{nstocdef1}.

\begin{proposition}
  \label{propo:nsprop1}
  Let $X,Y,Z$ be stochastic variables and let $\Phi,\Theta$ be
  decision variables.  Then the following are equivalent.
  \begin{enumerate}[label=(\roman{enumi})]
  \item
    \label{it:nspr1}
    \ind{X}{(Y,\Theta)}{(Z,\Phi)}.
  \item
    \label{it:nspr2}
    For all $\phi \in \Phi(\cS)$ and all real, bounded and measurable
    function $h_{1}$, there exists a function $w_{\phi}(Z)$ such that,
    for all $\sigma \in \Phi^{-1}(\phi)$ and all real, bounded and
    measurable functions $h_{2}$,
    \begin{equation*}
      \mE_\sigma\left\{h_{1}(X)h_{2}(Y)\mid Z\right\}=w_{\phi}(Z)\,
      \mE_\sigma\left\{h_{2}(Y)\mid Z\right\} \,\, \text{\as{\mP_\sigma}}.  
    \end{equation*}
  \item
    \label{it:nspr3}
    For all $\phi \in \Phi(\cS)$ and all $A_{X} \in \sigma(X)$, there
    exists a function $w_{\phi}(Z)$ such that, for all
    $\sigma \in \Phi^{-1}(\phi)$ and all $A_{Y} \in \sigma(Y)$,
    \begin{equation*}
      \mE_\sigma(\mathbbm{1}_{A_{X}\cap A_{Y}}\mid Z)=w_{\phi}(Z) \,
      \mE_\sigma(\mathbbm{1}_{A_{Y}}\mid Z) \,\, \text{\as{\mP_\sigma}}.
    \end{equation*} 
  \item
    \label{it:nspr4}
    For all $\phi \in \Phi(\cS)$ and all $A_{X} \in \sigma(X)$, there
    exists a function $w_{\phi}(Z)$ such that, for all
    $\sigma \in \Phi^{-1}(\phi)$,
    \begin{equation}
      \label{eq:nstoceq}
      \mE_\sigma(\mathbbm{1}_{A_{X}}\mid Y,Z)=w_{\phi}(Z) \,\, \text{\as{\mP_\sigma}}.
    \end{equation}
  \end{enumerate}
\end{proposition}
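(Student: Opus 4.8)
The plan is to prove Proposition \ref{propo:nsprop1} by establishing a cycle of implications among the four equivalent conditions. The natural order is \itref{nspr1} $\Leftrightarrow$ \itref{nspr4}, then \itref{nspr4} $\Leftrightarrow$ \itref{nspr3}, \itref{nspr4} $\Leftrightarrow$ \itref{nspr2}. The key observation is that condition \itref{nspr1} is literally \defref{nstocdef1}, which already quantifies over arbitrary real, bounded, measurable functions $h$, whereas \itref{nspr4} uses indicator functions $\mathbbm{1}_{A_X}$. So the equivalence \itref{nspr1} $\Leftrightarrow$ \itref{nspr4} should follow the familiar pattern from Proposition~\ref{propo:stocci}: indicators are bounded measurable functions (giving $\itref{nspr1} \Rightarrow \itref{nspr4}$ immediately by specializing $h = \mathbbm{1}_{A_X}$), while the reverse direction extends from indicators to simple functions by linearity, then to general bounded measurable $h$ by monotone/dominated convergence. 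The crucial feature to carry through the limiting argument is that \emph{the same} $w_\phi(Z)$ works simultaneously for all $\sigma \in \Phi^{-1}(\phi)$; one must check that the approximating sequence of $w$-functions can be assembled into a single limit $w_\phi(Z)$ valid across the whole fibre.

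**Next I would** handle \itref{nspr4} $\Leftrightarrow$ \itref{nspr3}. Fixing $\phi$ and $A_X$, the function $w_\phi(Z)$ from \itref{nspr4} is, by the argument of \remref{nstoc1}, also a version of $\mE_\sigma(\mathbbm{1}_{A_X} \mid Z)$ for every $\sigma \in \Phi^{-1}(\phi)$. Then for any $A_Y \in \sigma(Y)$,
\begin{align*}
  \mE_\sigma(\mathbbm{1}_{A_X \cap A_Y} \mid Z)
    &= \mE_\sigma\{\mE_\sigma(\mathbbm{1}_{A_X}\mathbbm{1}_{A_Y} \mid Y,Z) \mid Z\} \\
    &= \mE_\sigma\{\mathbbm{1}_{A_Y}\,\mE_\sigma(\mathbbm{1}_{A_X} \mid Y,Z) \mid Z\} \\
    &= \mE_\sigma\{\mathbbm{1}_{A_Y}\,w_\phi(Z) \mid Z\}
     = w_\phi(Z)\,\mE_\sigma(\mathbbm{1}_{A_Y} \mid Z),
\end{align*}
using that $\mathbbm{1}_{A_Y}$ is $\sigma(Y,Z)$-measurable and $w_\phi(Z)$ is $\sigma(Z)$-measurable. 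This gives \itref{nspr4} $\Rightarrow$ \itref{nspr3}. For the converse, I would run this computation in reverse: given \itref{nspr3}, the task is to recover the conditioning on $(Y,Z)$ in \itref{nspr4}, which amounts to showing that $w_\phi(Z)$ satisfies the defining property of $\mE_\sigma(\mathbbm{1}_{A_X} \mid Y,Z)$, i.e.\ that $\mE_\sigma\{w_\phi(Z)\,g\} = \mE_\sigma\{\mathbbm{1}_{A_X}\,g\}$ for all bounded $\sigma(Y,Z)$-measurable $g$; by a standard $\pi$-$\lambda$ or functional-class argument it suffices to verify this for $g = \mathbbm{1}_{A_Y}\mathbbm{1}_{B}$ with $A_Y \in \sigma(Y)$, $B \in \sigma(Z)$, which reduces precisely to the identity in \itref{nspr3}.

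**Finally**, the equivalence \itref{nspr4} $\Leftrightarrow$ \itref{nspr2} (or equivalently \itref{nspr3} $\Leftrightarrow$ \itref{nspr2}) is the passage from indicators back to bounded measurable functions $h_1, h_2$, paralleling \proporef{stocci}\,\itref{scidef4}. Starting from \itref{nspr3}, linearity in $A_X$ and $A_Y$ upgrades the identity to simple functions $h_1(X)$, $h_2(Y)$, and then the usual approximation by bounded monotone limits delivers \itref{nspr2}; the reverse is obtained by specializing $h_1, h_2$ to indicators.

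**The main obstacle** I anticipate is bookkeeping the ``uniform version'' requirement throughout every limiting and algebraic manipulation: unlike the purely stochastic Proposition~\ref{propo:stocci}, here each almost-sure identity must hold under $\mP_\sigma$ for every $\sigma$ in the fibre $\Phi^{-1}(\phi)$ \emph{with one common function $w_\phi(Z)$}, and \exref{nstocdef} warns that a null set under one $\mP_\sigma$ need not be null under another. Consequently, when approximating $h$ by simple functions or passing to limits, I must ensure the limiting $w_\phi(Z)$ is constructed once (not $\sigma$-by-$\sigma$) and that convergence holds simultaneously across the fibre; this is where care is needed, and it is the point at which the proof genuinely departs from the classical single-measure case.
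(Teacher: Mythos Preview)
Your proposal is correct and uses the same ingredients as the paper---tower property, a $\pi$--$\lambda$/Dynkin argument on $\sigma(Y,Z)$, and a monotone-class extension from indicators to bounded measurable functions---with the uniform-version concern handled exactly as it should be (build $w_\phi(Z)$ from the approximating sequence once, independent of $\sigma$).

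The only difference is organizational. The paper runs a single clean cycle \itref{nspr1}$\Rightarrow$\itref{nspr2}$\Rightarrow$\itref{nspr3}$\Rightarrow$\itref{nspr4}$\Rightarrow$\itref{nspr1}: it proves \itref{nspr1}$\Rightarrow$\itref{nspr2} directly by the tower computation (your \itref{nspr4}$\Rightarrow$\itref{nspr3} argument, but carried out with general $h_1,h_2$ rather than indicators, using the $w_\phi(Z)$ already supplied by \defref{nstocdef1}), then specializes to get \itref{nspr3}, then uses Dynkin for \itref{nspr3}$\Rightarrow$\itref{nspr4}, and finally the monotone class theorem for \itref{nspr4}$\Rightarrow$\itref{nspr1}. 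Your hub-and-spoke arrangement centred on \itref{nspr4} requires an additional approximation step to recover \itref{nspr2} from \itref{nspr3}, which the paper avoids by going straight from \itref{nspr1} to \itref{nspr2}; conversely, your layout makes the logical dependence on \itref{nspr4} (the indicator form closest to the definition) more transparent. Either route is sound.
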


\begin{proof}
  See \appref{A}.
\end{proof}

Using \proporef{nsprop1} we can obtain further properties of \eci.
For example, we can show that \defref{nstocdef1} can be equivalently
expressed in two simpler statements of \eci, or that when all the
decision variables are confined to the right-most term symmetry does
follow.  In \secref{asa} we will show still more properties.

\begin{proposition}
  \label{propo:defbreak}
  Let $X$, $Y$, $Z$ be stochastic variables and $\Phi$, $\Theta$
  decision variables.  Then the following are equivalent:
  \begin{enumerate}[label=(\roman{enumi})]
  \item
    \label{it:bb1}
    \ind{X}{(Y,\Theta)}{(Z,\Phi)}
  \item
    \label{it:bb2}
    \ind{X}{Y}{(Z,\Phi,\Theta)} and \ind{X}{\Theta}{(Z,\Phi)}.
  \end{enumerate}
\end{proposition}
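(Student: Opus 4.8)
The plan is to prove the equivalence by unpacking both sides through \defref{nstocdef1}, reading a missing stochastic or decision component in a middle term as the trivial (constant) variable (which is legitimate here since $\Phi$ and $\Theta$ are complementary). Under this reading, \ind{X}{Y}{(Z,\Phi,\Theta)} asks: for each $(\phi,\theta)\in(\Phi,\Theta)(\cS)$ and each bounded measurable $h$, that there exist a single function $w_{\phi,\theta}(Z)$ with $\mE_\sigma\{h(X)\mid Y,Z\}=w_{\phi,\theta}(Z)$ a.s.\ $[\mP_\sigma]$ for every $\sigma\in(\Phi,\Theta)^{-1}(\phi,\theta)$; and \ind{X}{\Theta}{(Z,\Phi)} asks: for each $\phi$ and $h$, that there exist a single $v_\phi(Z)$ with $\mE_\sigma\{h(X)\mid Z\}=v_\phi(Z)$ a.s.\ $[\mP_\sigma]$ for every $\sigma\in\Phi^{-1}(\phi)$, the conditioning on $Y$ disappearing because the middle stochastic variable is trivial.

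For \itref{bb1}$\Rightarrow$\itref{bb2}, both implications are immediate. The first follows by restriction: the function $w_\phi(Z)$ furnished by \defref{nstocdef1} for \ind{X}{(Y,\Theta)}{(Z,\Phi)} already works for every $\sigma\in\Phi^{-1}(\phi)$, hence a fortiori for the sub-collection $(\Phi,\Theta)^{-1}(\phi,\theta)\subseteq\Phi^{-1}(\phi)$, so I may take $w_{\phi,\theta}:=w_\phi$. The second is exactly the content of \remref{nstoc1}, which shows that this same $w_\phi(Z)$ also serves as a version of $\mE_\sigma\{h(X)\mid Z\}$ for all $\sigma\in\Phi^{-1}(\phi)$, so I may take $v_\phi:=w_\phi$.

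For \itref{bb2}$\Rightarrow$\itref{bb1}, the goal is to produce, for fixed $\phi$ and $h$, a single function of $Z$ that serves as $\mE_\sigma\{h(X)\mid Y,Z\}$ simultaneously for every $\sigma\in\Phi^{-1}(\phi)$, not merely for regimes sharing a common value of $\Theta$. The idea is to use the regime-independent function $v_\phi(Z)$ supplied by \ind{X}{\Theta}{(Z,\Phi)} as this universal version. Fix $\sigma\in\Phi^{-1}(\phi)$ and set $\theta:=\Theta(\sigma)$, so $\sigma\in(\Phi,\Theta)^{-1}(\phi,\theta)$. Applying the tower property to \ind{X}{Y}{(Z,\Phi,\Theta)} exactly as in \remref{nstoc1} gives $\mE_\sigma\{h(X)\mid Z\}=w_{\phi,\theta}(Z)$ a.s.\ $[\mP_\sigma]$, while \ind{X}{\Theta}{(Z,\Phi)} gives $\mE_\sigma\{h(X)\mid Z\}=v_\phi(Z)$ a.s.\ $[\mP_\sigma]$; hence $w_{\phi,\theta}(Z)=v_\phi(Z)$ a.s.\ $[\mP_\sigma]$. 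Combining this with $\mE_\sigma\{h(X)\mid Y,Z\}=w_{\phi,\theta}(Z)$ a.s.\ $[\mP_\sigma]$ yields $\mE_\sigma\{h(X)\mid Y,Z\}=v_\phi(Z)$ a.s.\ $[\mP_\sigma]$. As $v_\phi$ depends on neither $\sigma$ nor $\theta$, this verifies \defref{nstocdef1} for \ind{X}{(Y,\Theta)}{(Z,\Phi)} with $w_\phi:=v_\phi$.

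The one genuinely delicate point, and the reason the backward direction is not a triviality, is precisely the phenomenon flagged by \exref{nstocdef}: the per-$\theta$ versions $w_{\phi,\theta}(Z)$ are pinned down only up to $\mP_\sigma$-null sets, and these null sets vary across regimes, so one cannot legitimately glue the $w_{\phi,\theta}$ together over different values of $\theta$ into one version. The maneuver that sidesteps this is to commit in advance to the single function $v_\phi$ coming from \ind{X}{\Theta}{(Z,\Phi)} and then to check the required a.s.\ identity one regime at a time, so that no cross-regime patching of null sets is ever performed. I would allow myself to pass freely between the equivalent forms of \proporef{nsprop1} as convenient, but the conditional-expectation form $\mE_\sigma\{h(X)\mid Y,Z\}$ used above appears to be the cleanest vehicle for the argument.
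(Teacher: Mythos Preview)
Your proposal is correct and follows essentially the same route as the paper's own proof: for \itref{bb1}$\Rightarrow$\itref{bb2} you use the $w_\phi(Z)$ from \defref{nstocdef1} directly together with \remref{nstoc1}, and for \itref{bb2}$\Rightarrow$\itref{bb1} you show that the per-regime version $w_{\phi,\theta}(Z)$ agrees \as{\mP_\sigma} with the regime-free $v_\phi(Z)$ via the common value $\mE_\sigma\{h(X)\mid Z\}$, exactly as the paper does (the paper writes $w_\sigma$ in place of your $w_{\phi,\theta}$, which amounts to the same thing since $\Phi,\Theta$ are complementary). Your closing paragraph on why one cannot naively glue the $w_{\phi,\theta}$ is a nice articulation of the point behind \exref{nstocdef}.
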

 
\begin{proof}\quad\\
  \itref{bb1} $\Rightarrow$\itref{bb2}.  Since
  \ind{X}{(Y,\Theta)}{(Z,\Phi)}, for all $\phi \in \Phi(\cS)$ and
  $A_{X} \in \sigma(X)$, there exists $w_{\phi}(Z)$ such that for all
  $\sigma \in \Phi^{-1}(\phi)$,
  \begin{equation*}
    \mE_\sigma\left(1_{A_{X}} \mid Y,Z\right)=w_{\phi}(Z) \,\, \text{\as{\mP_\sigma}}
  \end{equation*}
  which proves that \ind{X}{Y}{(Z,\Phi,\Theta)}.  Also, by
  \remref{nstoc1},
  \begin{equation*}
    \mE_\sigma\left(1_{A_{X}} \mid Z\right)=w_{\phi}(Z) \,\, \text{\as{\mP_\sigma}}
  \end{equation*}
  which proves that \ind{X}{\Theta}{(Z,\Phi)}.

  \noindent \itref{bb2} $\Rightarrow$ \itref{bb1}.  Since
  \ind{X}{Y}{(Z,\Phi,\Theta)}, for all $\sigma \in \cS$ and
  $A_{X} \in \sigma(X)$, there exists $w_\sigma(Z)$ such that
  \begin{equation}
    \label{eq:defbreak1}
    \mE_\sigma[1_{A_{X}} \mid Y,Z]= w_\sigma(Z) \,\, \text{\as{\mP_\sigma}}.  
  \end{equation}
  By \remref{nstoc1},
  \begin{equation}
    \label{eq:defbreak2}
    \mE_\sigma\left(1_{A_{X}} \mid Z\right)= w_\sigma(Z) \,\, \text{\as{\mP_\sigma}}.  
  \end{equation}
  Since \ind{X}{\Theta}{(Z,\Phi)}, for all $\phi \in \Phi(\cS)$ and
  $A_{X} \in \sigma(X)$ there exists $w_{\phi}(Z)$ such that, for all
  $\sigma \in \Phi^{-1}(\phi)$,
  \begin{equation}
    \label{eq:defbreak3}
    \mE_\sigma\left(1_{A_{X}} \mid Z\right)=w_{\phi}(Z) \,\, \text{\as{\mP_\sigma}}.
  \end{equation}
  By~\eqref{defbreak2} and~\eqref{defbreak3},
  \begin{equation*}
    w_\sigma(Z)=w_{\phi}(Z) \,\, \text{\as{\mP_\sigma}}.
  \end{equation*}
  Thus, by~\eqref{defbreak1},
  \begin{equation*}
    \mE_\sigma\left(1_{A_{X}} \mid Y,Z\right)= w_{\phi}(Z) \,\, \text{\as{\mP_\sigma}},
  \end{equation*}
  which proves that \ind{X}{(Y,\Theta)}{(Z,\Phi)}.
\end{proof}

\begin{proposition}
  \label{propo:nstocsym}
  Let $X$, $Y$, $Z$ be stochastic variables, and $\Sigma$ a decision
  variable.  Then \ind{X}{Y}{(Z,\Sigma)} if and only if \sind{X}{Y}{Z}
  under $\mP_\sigma$ for all $\sigma \in\cS$.
\end{proposition}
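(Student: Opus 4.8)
The plan is to read off both implications directly from \defref{nstocdef1}, once we exploit the fact that the conditioning decision variable is the identity $\Sigma$ on $\cS$. The crucial structural point is that $\Sigma$ is a bijection of $\cS$ onto itself, so $\Sigma(\cS)=\cS$ and each fibre $\Sigma^{-1}(\sigma)=\{\sigma\}$ is a singleton. Consequently the ``simultaneously for all $\sigma$ in the fibre'' clause of \defref{nstocdef1} --- which is the genuine content of \eci\ and the source of the phenomenon illustrated in \exref{nstocdef} --- becomes vacuous. Reading \ind{X}{Y}{(Z,\Sigma)} in the frame of \defref{nstocdef1} with $\Phi=\Sigma$ and $\Theta$ trivial, the statement reduces to the assertion that, for every $\sigma\in\cS$ and every real, bounded and measurable $h$, there exists a function $w_\sigma(Z)$ with $\mE_\sigma\{h(X)\mid Y,Z\}=w_\sigma(Z)$ \as{\mP_\sigma}, with \emph{no} coupling imposed across distinct regimes.

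For the forward direction I would start from this reduced form. The existence of a $w_\sigma(Z)$, a function of $Z$ alone, equal to $\mE_\sigma\{h(X)\mid Y,Z\}$ forces, via \remref{nstoc1}, the identification $w_\sigma(Z)=\mE_\sigma\{h(X)\mid Z\}$ \as{\mP_\sigma}; hence $\mE_\sigma\{h(X)\mid Y,Z\}=\mE_\sigma\{h(X)\mid Z\}$ \as{\mP_\sigma} for every bounded measurable $h$. By \proporef{stocci}\itref{scidef3} this is precisely \sind{X}{Y}{Z} under $\mP_\sigma$, and as $\sigma\in\cS$ was arbitrary the claim follows.

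For the converse I would run the same equivalence backwards. Assuming \sind{X}{Y}{Z} under $\mP_\sigma$ for each $\sigma$, \proporef{stocci}\itref{scidef3} gives $\mE_\sigma\{h(X)\mid Y,Z\}=\mE_\sigma\{h(X)\mid Z\}$ \as{\mP_\sigma}; setting $w_\sigma(Z):=\mE_\sigma\{h(X)\mid Z\}$ produces a function of $Z$ fulfilling \eqref{nstocdefeq} for that single regime. Because $\Sigma^{-1}(\sigma)=\{\sigma\}$, no consistency of $w_\sigma$ across a larger fibre need be verified, so \defref{nstocdef1} is satisfied and \ind{X}{Y}{(Z,\Sigma)} holds.

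I expect no serious obstacle here: the whole argument hinges on the singleton-fibre observation, after which both directions are immediate consequences of \remref{nstoc1} and the equivalent form of stochastic conditional independence in \proporef{stocci}. The only point requiring a little care is the bookkeeping around $\Theta$ and complementarity --- one must note that taking $\Theta$ constant is legitimate precisely because $\Sigma$ alone already determines the regime --- but this is a formality rather than a difficulty, and it highlights the contrast with the general \eci\ setting, where the across-regime coupling within a non-trivial fibre is exactly what distinguishes \eci\ from regime-wise stochastic conditional independence.
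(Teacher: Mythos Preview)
Your proof is correct and rests on the same structural observation as the paper's: since $\Phi=\Sigma$ has singleton fibres, the cross-regime coupling in \defref{nstocdef1} is vacuous and the statement reduces to regime-wise stochastic conditional independence. The only difference is cosmetic: you work from \defref{nstocdef1} directly and invoke \proporef{stocci}\itref{scidef3} (the conditional-expectation form), whereas the paper passes through the product characterisation \proporef{nsprop1}\itref{nspr3}, specialises $A_Y=\Omega$ to identify $w_\sigma(Z)$ with $\mE_\sigma(\mathbbm{1}_{A_X}\mid Z)$, and lands on the symmetric form \proporef{stocci}\itref{scidef2}. The paper's route has the minor expository benefit of making the symmetry in \corref{nstocsym} immediately visible, but mathematically the two arguments are interchangeable.
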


\begin{proof}\quad\\
  \ind{X}{Y}{(Z,\Sigma)} is equivalent to: for all $\sigma \in \cS$
  and all $A_{X} \in \sigma(X)$ there exists a function $w_\sigma(Z)$
  such that, for all $A_{Y} \in \sigma(Y)$,
  \begin{equation*}
    \mE_\sigma\left(\mathbbm{1}_{A_{X}\cap A_{Y}}\mid Z\right)
    =w_\sigma(Z) \mE_\sigma(\mathbbm{1}_{A_{Y}}\mid Z) \,\, \text{\as{\mP_\sigma}}.
  \end{equation*} 
  In particular, for $A_Y=\Omega$ we have
  \begin{equation*}
    \mE_\sigma\left(\mathbbm{1}_{A_{X}}\mid Z\right)=w_\sigma(Z) \,\, \text{\as{\mP_\sigma}}.
  \end{equation*}
  The property \ind{X}{Y}{(Z,\Sigma)} is thus equivalent to: for all
  $\sigma \in\cS$, $A_{X} \in \sigma(X)$, $A_{Y} \in \sigma(Y)$,
  \begin{equation*}
    \mE_\sigma\left(\mathbbm{1}_{A_{Y}\cap A_{X}}\mid Z\right)
    =\mE_\sigma\left(\mathbbm{1}_{A_{Y}}\mid Z\right) \mE_\sigma\left(\mathbbm{1}_{A_{X}}\mid Z\right)
    \,\, \text{\as{\mP_\sigma}},
  \end{equation*} 
  which concludes the proof.
\end{proof}
\begin{corollary}
  \label{cor:nstocsym}
  \ind{X}{Y}{(Z,\Sigma)} $\Rightarrow \ind{Y}{X}{(Z,\Sigma)}$.
\end{corollary}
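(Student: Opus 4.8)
The plan is to deduce this symmetry statement directly from \proporef{nstocsym}, which reduces an extended conditional independence relation with the single decision variable $\Sigma$ confined to the conditioning slot to a family of ordinary stochastic conditional independence statements, one for each regime. Since stochastic conditional independence is already known to be symmetric (property P$1^{s}$ of \thmref{saxioms}), the symmetry of the extended relation should follow regime by regime.

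Concretely, I would proceed in three steps. First, apply \proporef{nstocsym} to the hypothesis to rewrite \ind{X}{Y}{(Z,\Sigma)} in its purely stochastic form, namely \sind{X}{Y}{Z} under $\mP_\sigma$ for every $\sigma \in \cS$. Second, fix an arbitrary $\sigma \in \cS$: the statement \sind{X}{Y}{Z} under $\mP_\sigma$ is an honest stochastic conditional independence relation with respect to the single probability measure $\mP_\sigma$, so property P$1^{s}$ (Symmetry) of \thmref{saxioms} applies and yields \sind{Y}{X}{Z} under $\mP_\sigma$; as $\sigma$ was arbitrary, this holds for all $\sigma \in \cS$. Third, invoke \proporef{nstocsym} once more, now in the reverse direction and with the roles of $X$ and $Y$ interchanged, to conclude that \sind{Y}{X}{Z} under $\mP_\sigma$ for all $\sigma$ is equivalent to \ind{Y}{X}{(Z,\Sigma)}, which is exactly the desired conclusion.

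There is no substantive obstacle here: all the real work has already been done in establishing \proporef{nstocsym}, and the remaining content is the fiberwise application of a property that stochastic conditional independence is known to possess. The one point worth flagging is that this argument genuinely depends on $\Sigma$ being confined to the right-most term. Symmetry is not expected to hold for extended conditional independence once a decision variable appears in the left or middle slot, and it is precisely in the right-most-only setting that \proporef{nstocsym} provides the clean translation into a per-regime stochastic statement, where symmetry is available.
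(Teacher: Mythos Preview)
Your proposal is correct and is exactly the argument the paper has in mind: the corollary is stated immediately after \proporef{nstocsym} with no separate proof, precisely because it follows by translating to per-regime stochastic conditional independence, invoking the symmetry P$1^{s}$, and translating back.
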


\subsection{Some separoid properties}
\label{sec:asa}

Comparing \defref{nstocdef1} for extended conditional independence
with \defref{scidef} for stochastic conditional independence, we
observe a close technical, as well as intuitive, similarity.  This
suggests that these two concepts should have similar properties, and
motivates the conjecture that the separoid axioms of conditional
independence will continue to hold for the extended concept.  In this
Section we show that this is indeed so, in complete generality, for a
subset of the axioms.  However, in order to extend this to other
axioms we need to impose additional conditions---this we shall develop
in later Sections.

One important difference between \eci\ and \sci\ concerns the symmetry
axiom P1.  Whereas symmetry holds universally for \sci, its
application to \eci\ is constrained by the fact that, for
\defref{nstocdef1} even to make sense, the first term $x$ in an \eci\
relation of the form \ind{x}{y}{z} must be fully stochastic, whereas
the second term $y$ can contain a mixture of stochastic and
non-stochastic variables---in which case it would make no sense to
interchange $x$ and $y$.  This restricted symmetry also means that
each of the separoid axioms P2--P5 has a possibly non-equivalent
``mirror image'' version, obtained by interchanging the first and
second terms in each relation.

The following theorem demonstrates certain specific versions of the
separoid axioms.

\begin{theorem}
  \label{thm:nstocaxioms}
  Let $X$, $Y$, $Z$, $W$ be stochastic variables and $\Phi$, $\Theta$,
  $\Sigma$ be decision variables.  Then the following properties hold.
  \begin{itemize}
  \item[P$1'$.] \ind{X}{Y}{(Z,\Sigma)} $\Rightarrow$
    \ind{Y}{X}{(Z,\Sigma)}.
  \item[P$2'$.] \ind{X}{(Y,\Sigma)}{(Y,\Sigma)}.
  \item[P$3'$.] \ind{X}{(Y,\Theta)}{(Z,\Phi)} and $W \preceq Y$
    $\Rightarrow$ \ind{X}{(W,\Theta)}{(Z,\Phi)}.
  \item[P$4'$.] \ind{X}{(Y,\Theta)}{(Z,\Phi)} and $W \preceq Y$
    $\Rightarrow$ \ind{X}{(Y,\Theta)}{(Z,W,\Phi)}.
  \item[P$5'$.] \ind{X}{(Y,\Theta)}{(Z,\Phi)} and
    \ind{X}{W}{(Y,Z,\Theta,\Phi)} $\Rightarrow$
    \ind{X}{(Y,W,\Theta)}{(Z,\Phi)}.
  \end{itemize}
\end{theorem}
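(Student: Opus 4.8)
The plan is to establish each of P$1'$--P$5'$ by mirroring, regime by regime, the corresponding stochastic arguments of \thmref{saxioms}, while carefully tracking the requirement that a single function $w_{\phi}$ serve \emph{simultaneously} for every $\sigma$ in the fibre $\Phi^{-1}(\phi)$. Two of the five are essentially immediate: P$1'$ is exactly \corref{nstocsym}, and P$2'$ is trivial because $\Sigma$ is the identity on $\cS$, so $\Sigma^{-1}(\phi)=\{\phi\}$ is a singleton and the ``simultaneous over the fibre'' demand collapses to a single regime; there the conditioning $(Y,\Sigma)$ on the right reduces to $Y$, and $w_{\phi}(Y):=\mE_{\phi}\{h(X)\mid Y\}$ is $\sigma(Y)$-measurable, hence of the required functional form by \lemref{measurability}. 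So the substantive work is in P$3'$, P$4'$, P$5'$.

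For P$3'$ and P$4'$ I would argue directly from \defref{nstocdef1}, exploiting $W\preceq Y$ through \lemref{measurability}: this gives $\sigma(W,Z)\subseteq\sigma(Y,Z)$ and $\sigma(Y,W,Z)=\sigma(Y,Z)$. Fix $\phi\in\Phi(\cS)$, a bounded measurable $h$, and let $w_{\phi}(Z)$ be the function supplied by the hypothesis \ind{X}{(Y,\Theta)}{(Z,\Phi)}. For P$4'$, since $\sigma(Y,W,Z)=\sigma(Y,Z)$ we have, for each $\sigma\in\Phi^{-1}(\phi)$, $\mE_{\sigma}\{h(X)\mid Y,W,Z\}=\mE_{\sigma}\{h(X)\mid Y,Z\}=w_{\phi}(Z)$ \as{\mP_\sigma}; as $w_{\phi}(Z)$ is trivially a function of $(Z,W)$, it serves as the witness for \ind{X}{(Y,\Theta)}{(Z,W,\Phi)}. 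For P$3'$ I would instead apply the tower property over $\sigma(W,Z)\subseteq\sigma(Y,Z)$: for each $\sigma\in\Phi^{-1}(\phi)$, $\mE_{\sigma}\{h(X)\mid W,Z\}=\mE_{\sigma}[\mE_{\sigma}\{h(X)\mid Y,Z\}\mid W,Z]=\mE_{\sigma}\{w_{\phi}(Z)\mid W,Z\}=w_{\phi}(Z)$ \as{\mP_\sigma}, the last step because $w_{\phi}(Z)$ is $\sigma(W,Z)$-measurable. In both cases the \emph{same} $w_{\phi}$ works across the whole fibre, since it was already fibre-uniform in the hypothesis, and the decision parts $\Theta,\Phi$ are carried along unchanged, so no extra condition on them is needed; these are the exact analogues of P$3^{s}$ and P$4^{s}$.

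For P$5'$ the conceptual step is to read the second hypothesis, \ind{X}{W}{(Y,Z,\Theta,\Phi)}, correctly: since $\Theta$ and $\Phi$ are complementary we have $(\Theta,\Phi)\approx\Sigma$, so $(Y,Z,\Theta,\Phi)\approx(Y,Z,\Sigma)$, and by \proporef{nstocsym} this hypothesis is equivalent to \sind{X}{W}{(Y,Z)} under $\mP_\sigma$ for every $\sigma\in\cS$. Then for fixed $\phi$ and $h$, with $w_{\phi}(Z)$ the witness from \ind{X}{(Y,\Theta)}{(Z,\Phi)}, I would chain two per-regime identities: for each $\sigma\in\Phi^{-1}(\phi)$, $\mE_{\sigma}\{h(X)\mid Y,W,Z\}=\mE_{\sigma}\{h(X)\mid Y,Z\}$ \as{\mP_\sigma} (by the second hypothesis read as pure \sci\ in regime $\sigma$) and $=w_{\phi}(Z)$ \as{\mP_\sigma} (by the first hypothesis). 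Hence $w_{\phi}(Z)$ witnesses \ind{X}{(Y,W,\Theta)}{(Z,\Phi)}; this is the analogue of P$5^{s}$.

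The main obstacle---really the only place where \eci\ differs from ordinary \sci---is the global, fibre-uniform quantifier on $w_{\phi}$, together with the warning of \exref{nstocdef} that an a.s.\ identity valid in one regime need not transfer to another. The resolution I would emphasise is structural rather than computational: every manipulation above (tower property, substitution of one conditional expectation for another) is carried out separately within each regime $\sigma$ as an \as{\mP_\sigma} statement, while the function being substituted, $w_{\phi}$, is the one already guaranteed by the hypothesis to be common to the entire fibre $\Phi^{-1}(\phi)$. Because I never manufacture a new version of a conditional expectation and never swap versions across regimes, the shared-witness property is preserved automatically, and no null-set pathology of the kind in \exref{nstocdef} can arise. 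An alternative, equally clean route for P$3'$ would be to decompose via \proporef{defbreak} into a stochastic part \ind{X}{Y}{(Z,\Phi,\Theta)} (handled by P$3^{s}$ in each regime) and a regime part \ind{X}{\Theta}{(Z,\Phi)} that is preserved verbatim, but the direct argument above is shorter.
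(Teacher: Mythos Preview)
Your proposal is correct and follows essentially the same route as the paper: P$1'$ via \corref{nstocsym}, P$2'$ by exploiting that $\Sigma$ is the identity so each fibre is a singleton, and P$3'$--P$5'$ by carrying the hypothesis-supplied witness $w_{\phi}(Z)$ through the tower property together with the $\sigma$-algebra relations $\sigma(W,Z)\subseteq\sigma(Y,Z)$ and $\sigma(Y,W,Z)=\sigma(Y,Z)$ coming from $W\preceq Y$. For P$5'$ your explicit use of \proporef{nstocsym} to read \ind{X}{W}{(Y,Z,\Theta,\Phi)} as per-regime \sind{X}{W}{(Y,Z)} is in fact cleaner than the paper's printed argument, which at that step carries an evident copy-paste slip (``Since $W\preceq Y$'') from the proof of P$4'$; the intended justification is exactly yours.
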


\begin{proof} \quad\\
  P$1'$.  Proved in \proporef{nstocsym}.

  \noindent P$2'$.  Let $\sigma \in \cS$ and $A_{X} \in \sigma(X)$.
  Then for all $A_{Y} \in \sigma(Y)$,
  \begin{equation*}
    \mE_\sigma(\mathbbm{1}_{A_{X} \cap A_{Y}} \mid Y)= \mathbbm{1}_{A_{Y}}\mE_\sigma(\mathbbm{1}_{A_{X}} \mid Y) \,\, \text{\as{\mP_\sigma}}
  \end{equation*}
  which concludes the proof.
  
  \noindent P$3'$.  Let $\phi \in \Phi(\cS)$ and
  $A_{X} \in \sigma(X)$.  Since \ind{X}{(Y,\Theta)}{(Z,\Phi)}, there
  exists $w_{\phi}(Z)$ such that, for all
  $\sigma \in \Phi^{-1}(\phi)$,
  \begin{equation*}
    \mE_\sigma\left(\mathbbm{1}_{A_{X}} \mid Y,Z\right)=w_{\phi}(Z) \,\, \text{\as{\mP_\sigma}}.
  \end{equation*}
  Since $W \preceq Y$, it follows from \lemref{measurability} that
  $\sigma(W) \subseteq \sigma(Y)$ and thus
  $\sigma(W,Z) \subseteq \sigma(Y,Z)$.  Then
  \begin{align*}
    \mE_\sigma\left(\mathbbm{1}_{A_{X}} \mid W,Z\right)
    &=\mE_\sigma\left\{\mE_\sigma(\mathbbm{1}_{A_{X}} \mid Y,Z) \mid
      W,Z\right\}
      \,\, \text{\as{\mP_\sigma}} \\
    &=\mE_\sigma\left\{w_{\phi}(Z)\mid W,Z\right\} \,\, \text{\as{\mP_\sigma}} \\
    &=w_{\phi}(Z)\,\, \text{\as{\mP_\sigma}}
  \end{align*}
  which concludes the proof.
  
  \noindent P$4'$.  Let $\phi \in \Phi(\cS)$ and
  $A_{X} \in \sigma(X)$.  Since \ind{X}{(Y,\Theta)}{(Z,\Phi)}, there
  exists $w_{\phi}(Z)$ such that, for all
  $\sigma \in \Phi^{-1}(\phi)$,
  \begin{equation*}
    \mE_\sigma\left(\mathbbm{1}_{A_{X}} \mid Y,Z\right) 
    =w_{\phi}(Z) \,\, \text{\as{\mP_\sigma}}.
  \end{equation*}
  Since $W \preceq Y$, it follows from \lemref{measurability} that
  $\sigma(W) \subseteq \sigma(Y)$ and thus
  $\sigma(Y,Z,W)=\sigma(Y,Z)$.  Then
  \begin{align*}
    \mE_\sigma\left(\mathbbm{1}_{A_{X}} \mid Y,Z,W\right)
    &=\mE_\sigma\left(\mathbbm{1}_{A_{X}} \mid Y,Z\right)
      \,\, \text{\as{\mP_\sigma}} \\
    &=w_{\phi}(Z)\,\, \text{\as{\mP_\sigma}}
  \end{align*}
  which concludes the proof.

  \noindent P$5'$.  Let $\phi \in \Phi(\cS)$ and
  $A_{X} \in \sigma(X)$.  Since \ind{X}{(Y,\Theta)}{(Z,\Phi)}, there
  exists $w_{\phi}(Z)$ such that, for all
  $\sigma \in \Phi^{-1}(\phi)$,
  \begin{equation*}
    \mE_\sigma\left(\mathbbm{1}_{A_{X}} \mid Y,Z\right) 
    =w_{\phi}(Z) \,\, \text{\as{\mP_\sigma}}.
  \end{equation*}
  Since $W \preceq Y$, it follows from \lemref{measurability} that
  $\sigma(W) \subseteq \sigma(Y)$ and thus
  $\sigma(Y,W,Z) = \sigma(Y,Z)$.  Then
  \begin{align*}
    \mE_\sigma\left(\mathbbm{1}_{A_{X}} \mid Y,W,Z\right) &=
                                                            \mE_\sigma\left(\mathbbm{1}_{A_{X}} \mid Y,Z\right)
                                                            \,\, \text{\as{\mP_\sigma}} \\
                                                          &= w_{\phi}(Z) \,\, \text{\as{\mP_\sigma}}
  \end{align*}
  which concludes the proof.
\end{proof}

Lack of symmetry however, introduces some complications as the
symmetric equivalents of axioms P$3'$, P$4'$ and P$5'$ do not
automatically follow.

Consider the following statements, which mirror P$3'$--P$5'$:
\begin{enumerate}[label=(\roman{enumi})]
\item[P$3''$.]\ind{X}{(Y,\Theta)}{(Z,\Phi)} and $W \preceq X$
  $\Rightarrow$ \ind{W}{(Y,\Theta)}{(Z,\Phi)}.
\item[P$4''$.]\ind{X}{(Y,\Theta)}{(Z,\Phi)} and $W \preceq X$
  $\Rightarrow$ \ind{X}{(Y,\Theta)}{(Z,W,\Phi)}.
\item[P$5''$.]\ind{X}{(Y,\Theta)}{(Z,\Phi)} and
  \ind{W}{(Y,\Theta)}{(X,Z,\Phi)} $\Rightarrow$
  \ind{(X,W)}{(Y,\Theta)}{(Z,\Phi)}.
\end{enumerate}
P$3''$ follows straightforwardly, and P$5''$ will be proved to hold in
\proporef{P5''} below.  However, P$4''$ presents some difficulty.  We
will see below (\corref{nstoc2}, \proporef{nstocaxiom4disc},
\proporef{nstocaxiom4}) that we can obtain P$4''$ under certain
additional conditions, but validity in full generality remains an open
problem.

\begin{lemma}
  \label{lem:nstocaxiom3i}
  Let $X$, $Y$, $Z$, $W$ be stochastic variables and $\Phi$, $\Theta$
  be decision variables.  Then
  \begin{equation*}
    \ind{X}{(Y,\Theta)}{(Z,\Phi)} \text{ and } W \preceq X 
    \Rightarrow \ind{(W,Z)}{(Y,\Theta)}{(Z,\Phi)}.
  \end{equation*}
\end{lemma}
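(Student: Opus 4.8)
The plan is to verify the defining condition for $\ind{(W,Z)}{(Y,\Theta)}{(Z,\Phi)}$ directly, in the indicator form of \proporef{nsprop1}\itref{nspr4}: for every $\phi \in \Phi(\cS)$ and every $A \in \sigma(W,Z)$ I must produce a \emph{single} function $w_{\phi}(Z)$ with $\mE_\sigma(\mathbbm{1}_{A} \mid Y,Z) = w_{\phi}(Z)$ \as{\mP_\sigma} simultaneously for all $\sigma \in \Phi^{-1}(\phi)$. The decision variables play no active role here: $\Theta$ is unchanged and remains complementary to $\Phi$, so the whole content of the lemma is the measurability bookkeeping for the enlarged first term $(W,Z)$.

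First I would settle the generating rectangles. The sets $A_{W} \cap A_{Z}$ with $A_{W} \in \sigma(W)$ and $A_{Z} \in \sigma(Z)$ form a $\pi$-system generating $\sigma(W,Z)$. For such a set, since $A_{Z} \in \sigma(Z) \subseteq \sigma(Y,Z)$ the indicator $\mathbbm{1}_{A_{Z}}$ may be taken out of the conditional expectation, giving $\mE_\sigma(\mathbbm{1}_{A_{W} \cap A_{Z}} \mid Y,Z) = \mathbbm{1}_{A_{Z}}\,\mE_\sigma(\mathbbm{1}_{A_{W}} \mid Y,Z)$. Because $W \preceq X$ forces $\sigma(W) \subseteq \sigma(X)$ by \lemref{measurability}, we have $A_{W} \in \sigma(X)$, so the hypothesis $\ind{X}{(Y,\Theta)}{(Z,\Phi)}$ (in the form \itref{nspr4}) supplies a function $w_{\phi}^{A_{W}}(Z)$ that serves, uniformly over $\sigma \in \Phi^{-1}(\phi)$, as a version of $\mE_\sigma(\mathbbm{1}_{A_{W}} \mid Y,Z)$. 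Hence $\mathbbm{1}_{A_{Z}}\,w_{\phi}^{A_{W}}(Z)$ is a function of $Z$ alone that works for all $\sigma$ in the fibre, establishing the property on the $\pi$-system.

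Next I would promote this to all of $\sigma(W,Z)$ by a Dynkin argument. Let $\cD$ be the collection of $A \in \sigma(W,Z)$ for which the required uniform witness $w_{\phi}(Z)$ exists for every $\phi$. Closure under complementation is immediate (use $1 - w_{\phi}^{A}$), and $\Omega \in \cD$; the only point needing care is closure under increasing limits. Given $A_{n} \uparrow A$ in $\cD$ with associated functions $w_{\phi}^{A_{n}}$, I would define $w_{\phi}^{A}(z) := \limsup_{n} w_{\phi}^{A_{n}}(z)$ as a bona fide function of $z$, and then check fibrewise that it does the job: for each fixed $\sigma \in \Phi^{-1}(\phi)$, conditional monotone convergence gives $\mE_\sigma(\mathbbm{1}_{A_{n}} \mid Y,Z) \to \mE_\sigma(\mathbbm{1}_{A} \mid Y,Z)$ \as{\mP_\sigma}, while on a full-measure set the $w_{\phi}^{A_{n}}(Z)$ agree with these conditional expectations for every $n$ (a countable intersection of a.s.\ events), so their $\limsup$ equals $\mE_\sigma(\mathbbm{1}_{A} \mid Y,Z)$ \as{\mP_\sigma}. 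Dynkin's $\pi$-$\lambda$ theorem then yields $\cD = \sigma(W,Z)$, which is exactly the assertion.

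The main obstacle, and the reason a naive appeal to P$3''$ together with symmetry fails, is precisely this uniformity over the fibre $\Phi^{-1}(\phi)$: each regime $\sigma$ only controls its conditional expectations up to its own null sets, and (as \exref{nstocdef} shows) a null set in one regime may carry positive probability in another, so I cannot simply take a $\sigma$-dependent limit and hope it glues. Passing through the explicit $\limsup$ of the $Z$-functions themselves, rather than through the conditional expectations they represent, is what keeps one witness function valid across all regimes in the fibre at once; everything else is routine manipulation of conditional expectations.
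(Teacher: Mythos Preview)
Your proposal is correct and follows essentially the same route as the paper: both fix $\phi$, verify the required $Z$-only witness on the $\pi$-system of rectangles $A_W\cap A_Z$ by pulling out $\mathbbm{1}_{A_Z}$ and invoking the hypothesis on $A_W\in\sigma(W)\subseteq\sigma(X)$, and then upgrade to all of $\sigma(W,Z)$ via a Dynkin/$d$-system argument with conditional monotone convergence at the increasing-limits step. Your use of $\limsup_n w_\phi^{A_n}(z)$ in place of the paper's $\lim_n$ is a harmless (and arguably tidier) variant, since it guarantees a globally defined function of $z$ before checking the fibrewise a.s.\ identifications.
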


\begin{proof}
  Since \ind{X}{(Y,\Theta)}{(Z,\Phi)}, for all $\phi \in \Phi(\cS)$
  and all $A_{X} \in \sigma(X)$ there exists $w_{\phi}(Z)$ such that,
  for all $\sigma \in \Phi^{-1}(\phi)$,
  \begin{equation}
    \label{eq:nstocaxiom3ia}
    \mE_\sigma\left(\mathbbm{1}_{A_{X}} \mid Y,Z\right)=w_{\phi}(Z) \,\, \text{\as{\mP_\sigma}}.
  \end{equation}
  To prove that \ind{(W,Z)}{(Y,\Theta)}{(Z,\Phi)}, let
  $\phi \in \Phi(\cS)$ and $A_{W,Z} \in \sigma(W,Z)$.  We will show
  that there exists $a_{\phi}(Z)$ such that, for all
  $\sigma \in \Phi^{-1}(\phi)$,
  \begin{equation}
    \label{eq:nstocaxiom3ib}
    \mE_\sigma\left(\mathbbm{1}_{A_{W,Z}} \mid Y,Z\right)=a_{\phi}(Z) \,\, \text{\as{\mP_\sigma}}.
  \end{equation}
  Consider
  \begin{equation*}
    \cD= \{A_{W,Z} \in \sigma(W,Z): \text{there exists $a_{\phi}(Z)$ 
      such that~\eqref{nstocaxiom3ib} holds}\}
  \end{equation*}
  and
  \begin{equation*}
    \Pi= \{A_{W,Z} \in \sigma(W,Z): A_{W,Z}=A_{W} \cap A_{Z} \text{ for } A_{W} \in \sigma(W) \text{ and } A_{Z} \in \sigma(Z)\}.
  \end{equation*}
  
  Then $\sigma(\Pi)=\sigma(W,Z)$~\cite[p.~73]{res:14}.  We will show
  that $\cD$ is a $d$-system that contains $\Pi$.  We can then apply
  Dynkin's lemma~\cite[p.~42]{bil:95} to conclude that $\cD$ contains $\sigma(\Pi)=\sigma(W,Z)$.\\ \\
  To show that $\cD$ contains $\Pi$, let $A_{W,Z}=A_{W}\cap A_{Z}$
  with $A_{W} \in \sigma(W)$ and $A_{Z} \in \sigma(Z)$.  Then
  \begin{align*}
    \mE_\sigma\left(\mathbbm{1}_{A_{W}}\mathbbm{1}_{A_{Z}} \mid
    Y,Z\right) &=\mathbbm{1}_{A_{Z}}
                 \mE_\sigma\left(\mathbbm{1}_{A_{W}} \mid Y,Z\right)
                 \,\, \text{\as{\mP_\sigma}} \\
               &=\mathbbm{1}_{A_{Z}} w_{\phi}(Z) \,\, \text{\as{\mP_\sigma} 
                 by~\eqref{nstocaxiom3ia}}.
  \end{align*}
  Now define $a_{\phi}(Z):=\mathbbm{1}_{A_{Z}} w_{\phi}(Z)$ and we are done.  \\ \\
  To show that $\cD$ is a $d$-system, first note that
  $\Omega \in \cD$.  Also, for $A_1,A_2 \in \cD$ such that
  $A_1 \subseteq A_2$, we can readily see that
  $A_2 \setminus A_1 \in \cD$.  Now consider an increasing sequence
  $(A_{n}: n \in \mN)$ in $\cD$ and denote by $a_{\phi}^{A_n}(Z)$ the
  corresponding function such that~\eqref{nstocaxiom3ib} holds.  Then
  $A_{n} \uparrow \displaystyle\cup_{n}A_{n}$ and
  $\mathbbm{1}_{A_{n}} \uparrow
  \mathbbm{1}_{\displaystyle\cup_{n}A_{n}}$ pointwise.  Thus, by
  conditional monotone convergence~\cite[p.~193]{dur:13},
  \begin{align*}
    \mE_\sigma\left(\mathbbm{1}_{\displaystyle\cup_{n}A_{n}} \mid
    Y,Z\right) &= \lim_{n \rightarrow
                 \infty}\mE_\sigma\left(\mathbbm{1}_{A_{n}} \mid Y,Z\right)
                 \,\, \text{\as{\mP_\sigma}}  \\
               &= \lim_{n \rightarrow \infty} a_{\phi}^{A_n}(Z) \,\,
                 \text{\as{\mP_\sigma}}.
  \end{align*}
  Now define
  $a_{\phi}(Z):=\displaystyle\lim_{n \rightarrow \infty}
  a_{\phi}^{A_n}(Z)$ and we are done.
\end{proof}

\begin{proposition}
  \label{propo:P5''}
  Let $X$, $Y$, $Z$, $W$ be stochastic variables and $\Phi$, $\Theta$
  decision variables.  Then\\[1ex]
  \noindent P$5''$: \begin{math} \ind{X}{(Y,\Theta)}{(Z,\Phi)} \text{
      and } \ind{W}{(Y,\Theta)}{(X,Z,\Phi)} \Rightarrow
    \ind{(X,W)}{(Y,\Theta)}{(Z,\Phi)}.
  \end{math}
\end{proposition}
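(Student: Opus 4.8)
The plan is to argue directly at the measure-theoretic level, paralleling \lemref{nstocaxiom3i}, rather than trying to deduce P$5''$ from the unprimed axioms: the latter route would require interchanging the first two terms, and the symmetry needed for that is exactly what is unavailable here. Fix $\phi \in \Phi(\cS)$ throughout. By \proporef{nsprop1}\itref{nspr4} it suffices to produce, for each $A \in \sigma(X,W)$, a single function $u_{\phi}(Z)$ with $\mE_\sigma(\mathbbm{1}_{A} \mid Y,Z)=u_{\phi}(Z)$ \as{\mP_\sigma} simultaneously for every $\sigma \in \Phi^{-1}(\phi)$.

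First I would treat the generating $\pi$-system $\Pi=\{A_X\cap A_W : A_X\in\sigma(X),\,A_W\in\sigma(W)\}$, which has $\sigma(\Pi)=\sigma(X,W)$. For a product set, conditioning first on the larger $\sigma$-algebra $\sigma(X,Y,Z)$ and pulling out the $\sigma(X)$-measurable factor $\mathbbm{1}_{A_X}$ gives
\begin{equation*}
  \mE_\sigma(\mathbbm{1}_{A_X\cap A_W}\mid Y,Z)=\mE_\sigma\{\mathbbm{1}_{A_X}\,\mE_\sigma(\mathbbm{1}_{A_W}\mid X,Y,Z)\mid Y,Z\}\,\,\text{\as{\mP_\sigma}},
\end{equation*}
and the hypothesis \ind{W}{(Y,\Theta)}{(X,Z,\Phi)} supplies one function $v_{\phi}(X,Z)$, common to all $\sigma\in\Phi^{-1}(\phi)$, with $\mE_\sigma(\mathbbm{1}_{A_W}\mid X,Y,Z)=v_{\phi}(X,Z)$ \as{\mP_\sigma}. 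Thus the quantity to control is $\mE_\sigma\{\mathbbm{1}_{A_X}\,v_{\phi}(X,Z)\mid Y,Z\}$.

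The crux, and the step I expect to be the main obstacle, is to show that this last conditional expectation is a function of $Z$ alone, uniformly over the fiber. The difficulty is that $\mathbbm{1}_{A_X}\,v_{\phi}(X,Z)$ depends on $X$ \emph{and} $Z$, so the hypothesis \ind{X}{(Y,\Theta)}{(Z,\Phi)}---which governs $\mE_\sigma\{f(X)\mid Y,Z\}$ for functions of $X$ only---does not apply verbatim. I would bridge this with a ``freezing'' lemma: for every bounded measurable $g(X,Z)$ there is a function $G_{\phi}(Z)$ with $\mE_\sigma\{g(X,Z)\mid Y,Z\}=G_{\phi}(Z)$ \as{\mP_\sigma} for all $\sigma\in\Phi^{-1}(\phi)$. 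For a product $g=f(X)k(Z)$ this is immediate, since $k(Z)$ factors out of $\mE_\sigma\{\cdot\mid Y,Z\}$ and \defref{nstocdef1} handles $f(X)$, yielding $k(Z)\,w_{\phi}(Z)$. The collection of $g$ for which the conclusion holds is a vector space, contains these products (a multiplicative generating class for $\sigma(X,Z)$), and is closed under bounded monotone limits via conditional monotone convergence (the limiting function being the pointwise limit of the approximants, exactly as in \lemref{nstocaxiom3i}); the functional monotone-class theorem then gives all bounded $\sigma(X,Z)$-measurable $g$. Applying this to $g=\mathbbm{1}_{A_X}v_{\phi}$ establishes the claim for every member of $\Pi$.

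Finally I would promote the result from $\Pi$ to $\sigma(X,W)$ by the $d$-system argument of \lemref{nstocaxiom3i}: let $\cD$ consist of those $A\in\sigma(X,W)$ admitting a function $u_{\phi}(Z)$ with $\mE_\sigma(\mathbbm{1}_{A}\mid Y,Z)=u_{\phi}(Z)$ \as{\mP_\sigma} for all $\sigma\in\Phi^{-1}(\phi)$. Then $\Omega\in\cD$, $\cD$ is closed under proper differences, and closed under increasing unions (conditional monotone convergence again, taking the pointwise limit of the associated functions), so $\cD$ is a $d$-system containing the $\pi$-system $\Pi$; Dynkin's lemma gives $\cD\supseteq\sigma(\Pi)=\sigma(X,W)$. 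As $\phi$ was arbitrary, \proporef{nsprop1}\itref{nspr4} yields \ind{(X,W)}{(Y,\Theta)}{(Z,\Phi)}. The two monotone-class passages are routine adaptations of \lemref{nstocaxiom3i}; the genuinely new ingredient is the freezing lemma, where the joint $(X,Z)$-dependence of $v_{\phi}$ is disentangled.
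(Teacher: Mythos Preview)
Your proof is correct and follows essentially the same route as the paper's: reduce to rectangles $A_X\cap A_W$ via the $d$-system argument of \lemref{nstocaxiom3i}, tower through $\sigma(X,Y,Z)$ to replace $\mE_\sigma(\mathbbm{1}_{A_W}\mid X,Y,Z)$ by a common $v_\phi(X,Z)$, and then show $\mE_\sigma\{g(X,Z)\mid Y,Z\}$ is a common function of $Z$ alone. The one economy the paper makes is that your ``freezing lemma'' need not be proved from scratch: applying \lemref{nstocaxiom3i} with $W=X$ gives $\ind{(X,Z)}{(Y,\Theta)}{(Z,\Phi)}$, which by \defref{nstocdef1} is precisely the statement that every bounded measurable $g(X,Z)$ admits a common $G_\phi(Z)$---so what you flag as the ``genuinely new ingredient'' is already on the shelf.
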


\begin{proof}
  Following the same approach as in the proof of
  \lemref{nstocaxiom3i}, to prove that
  \ind{(X,W)}{(Y,\Theta)}{(Z,\Phi)} it is enough to show that, for all
  $\phi \in \Phi(\cS)$ and all $A_{X,W}=A_{X} \cap A_{W}$ where
  $A_{X} \in \sigma(X)$ and $A_{W} \in \sigma(W)$, there exists
  $w_{\phi}(Z)$ such that, for all $\sigma \in \Phi^{-1}(\phi)$,
  \begin{equation}
    \label{eq:P5''i}
    \mE_\sigma\left(\mathbbm{1}_{A_{X,W}} \mid Y,Z\right)=w_{\phi}(Z) \,\, \text{\as{\mP_\sigma}}.
  \end{equation}
  Since \ind{W}{(Y,\Theta)}{(X,Z,\Phi)}, for all $\phi \in \Phi(\cS)$
  and all $A_{W} \in \sigma(W)$ there exists $w^{1}_{\phi}(X,Z)$ such
  that, for all $\sigma \in \Phi^{-1}(\phi)$,
  \begin{equation}
    \label{eq:P5''ii}
    \mE_\sigma\left(\mathbbm{1}_{A_{W}} \mid X,Y,Z\right)=w^{1}_{\phi}(X,Z) \,\, \text{\as{\mP_\sigma}}.
  \end{equation}
  Also by \lemref{nstocaxiom3i},
  \begin{equation*}
    \ind{X}{(Y,\Theta)}{(Z,\Phi)} \Rightarrow \ind{(X,Z)}{(Y,\Theta)}{(Z,\Phi)}.
  \end{equation*}
  Thus, for all $\phi \in \Phi(\cS)$ and all $h(X,Z)$, there exists
  $w^{2}_{\phi}(Z)$ such that, for all $\sigma \in \Phi^{-1}(\phi)$,
  \begin{equation}
    \label{eq:P5''iii}
    \mE_\sigma\left\{h(X,Z) \mid Y,Z\right\}=w^{2}_{\phi}(Z) 
    \,\, \text{\as{\mP_\sigma}}.
  \end{equation}
  Let $\phi \in \Phi(\cS)$ and $A_{X,W}=A_{X} \cap A_{W}$, where
  $A_{X} \in \sigma(X)$ and $A_{W} \in \sigma(W)$.  Then
  \begin{align*}
    \mE_\sigma\left(\mathbbm{1}_{A_{X} \cap A_{W}} \mid Y,Z\right) &=
                                                                     \mE_\sigma\left\{\mE_\sigma\left(\mathbbm{1}_{A_{X} \cap A_{W}}
                                                                     \mid X,Y,Z\right) \mid Y,Z\right\}
                                                                     \,\, \text{\as{\mP_\sigma}}\\
                                                                   &= \mE_\sigma\left\{\mathbbm{1}_{A_{X}}
                                                                     \mE_\sigma\left(\mathbbm{1}_{A_{W} }\mid X,Y,Z\right) \mid
                                                                     Y,Z\right\}
                                                                     \,\, \text{\as{\mP_\sigma}}\\
                                                                   &= \mE_\sigma\left\{\mathbbm{1}_{A_{X}} w^{1}_{\phi}(X,Z) \mid
                                                                     Y,Z\right\}
                                                                     \,\, \text{\as{\mP_\sigma} by~\eqref{P5''ii}} \\
                                                                   &= w^{2}_{\phi}(Z) \,\, \text{\as{\mP_\sigma} by~\eqref{P5''iii}},
  \end{align*}
  which proves~\eqref{P5''i}.
\end{proof}

\section{A Bayesian approach}
\label{sec:bayes}
In the present Section, we introduce a Bayesian construction in an
attempt to justify the remaining separoid axioms.  We extend the
original space in order to construe both stochastic and non-stochastic
variables as measurable functions on the new space and create an
analogy between \ecis\ and \sci.  Similar ideas can be found in a
variety of contexts in probability theory and statistics.  Examples
include Poisson random processes~\citep[pp.~82--84]{kin:93}, or
Bayesian approaches to statistics~\citep{kol:42}.  We will see that,
under the assumption of a discrete regime space, \ecis\ and \scis\ are
equivalent.  Thus we can continue to apply all the properties
P$1^{s}$--P$5^{s}$ of \thmref{saxioms}.

Consider a measurable space $(\Omega,\cA)$ and a regime space $\cS$
and let $\cF$ be a $\sigma$-algebra of subsets of $\cS$.  We can
expand the original space $(\Omega,\cA)$ and consider the product
space $\Omega \times \cS$ with its corresponding $\sigma$-algebra
$\cA \otimes \cF$, where
$\cA \otimes \cF := \sigma(\cA \times \cF):=\sigma(\{A \times B: A \in
\cA, B \in \cF\})$.  Thus, we can regard all stochastic variables
$X, Y, Z, \ldots$ defined on $(\Omega,\cA)$ also as defined on
$(\Omega \times \cS,\cA \otimes \cF)$ and all $\cF$-measurable
decision variables $\Theta, \Phi, \ldots$ defined on $\cS$ also as
defined on $(\Omega \times \cS,\cA \otimes \cF)$.  To see this,
consider any stochastic variable
$X:(\Omega, \cA) \rightarrow (\mR, \cB_{X})$.  For any such $X$ we can
define
$X^*:(\Omega \times \cS,\cA \otimes \cF) \rightarrow (\mR, \cB_{X})$
by $X^{*}(\omega,\sigma)=X(\omega)$.  It is readily seen that $X^{*}$
is $\cA \otimes \cF$-measurable.  Similar justification applies for
decision variables.  Thus, in the initial space $(\Omega,\cA)$ we can
talk about \ecis\ and in the product space
$(\Omega \times \cS,\cA \otimes \cF)$, after we equip it with a
probability measure, we can talk about \sci.  To rigorously justify
the equivalence of \ecis\ and \sci, we will need the following
results.
 
\begin{lemma}
  \label{lem:product1}
  Let $f: \Omega \times \cS \rightarrow \mR$ be
  $\cA \otimes \cF$-measurable.  Define for all $\sigma \in \cS$,
  $f_\sigma: \Omega \rightarrow \mR$ by
  $f_\sigma(\omega):=f(\omega,\sigma)$.  Then $f_\sigma$ is
  $\cA$-measurable.  If further $f$ is bounded, define for all
  $\sigma \in \cS$, $\mE_\sigma(f_\sigma): \cS \rightarrow \mR$ by
  $\mE_\sigma(f_\sigma):= \int_{\Omega} f_\sigma(\omega)
  \mP_\sigma(d\omega)$.  Then the function
  $\sigma \mapsto \mE_\sigma(f_\sigma)$ is bounded and
  $\cF$-measurable.
\end{lemma}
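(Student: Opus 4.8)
The plan is to prove the two assertions separately, treating the first as essentially immediate and reserving the real work for the measurability of the integral. For the first assertion, I would observe that for each fixed $\sigma \in \cS$ the insertion map $\iota_\sigma : \Omega \to \Omega \times \cS$ given by $\iota_\sigma(\omega) = (\omega,\sigma)$ is measurable from $(\Omega,\cA)$ to $(\Omega\times\cS, \cA\otimes\cF)$. It suffices to check this on the generating rectangles: for $A \in \cA$ and $B \in \cF$ the preimage $\iota_\sigma^{-1}(A\times B)$ equals $A$ when $\sigma \in B$ and $\emptyset$ otherwise, so it lies in $\cA$ in either case. Since $f_\sigma = f \circ \iota_\sigma$, it is $\cA$-measurable as a composition of measurable maps, with no boundedness needed.

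For the second assertion, boundedness is trivial: if $|f| \le M$ then, as each $\mP_\sigma$ is a probability measure, $|\mE_\sigma(f_\sigma)| \le M$. The substance is $\cF$-measurability of $\sigma \mapsto \mE_\sigma(f_\sigma)$, for which I would run a functional monotone-class (Dynkin) argument, relying on the standing requirement that $\{\mP_\sigma\}$ form a Markov kernel, i.e.\ that $\sigma \mapsto \mP_\sigma(A)$ be $\cF$-measurable for every $A \in \cA$. For an indicator $f = \mathbbm{1}_{A\times B}$ of a rectangle one has $\mE_\sigma(f_\sigma) = \mathbbm{1}_B(\sigma)\,\mP_\sigma(A)$, a product of $\cF$-measurable functions, hence $\cF$-measurable. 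I would then let $\cE$ be the collection of all $E \in \cA\otimes\cF$ for which $\sigma \mapsto \mP_\sigma(E_\sigma)$ is $\cF$-measurable, and verify that $\cE$ is a $d$-system: it contains $\Omega\times\cS$; it is closed under proper differences, since $\mP_\sigma((E_2\setminus E_1)_\sigma) = \mP_\sigma((E_2)_\sigma) - \mP_\sigma((E_1)_\sigma)$ when $E_1 \subseteq E_2$; and it is closed under increasing unions, since by monotone convergence at each fixed $\sigma$ the function $\sigma \mapsto \mP_\sigma((\cup_n E_n)_\sigma)$ is a pointwise limit of the $\cF$-measurable functions $\sigma \mapsto \mP_\sigma((E_n)_\sigma)$. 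As $\cE$ contains the $\pi$-system of rectangles, Dynkin's lemma gives $\cE = \cA\otimes\cF$.

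Having settled the indicator case, I would extend to general bounded $f$ in the usual way: by linearity the conclusion holds for simple $\cA\otimes\cF$-measurable functions, and any bounded measurable $f$ is a uniform (hence bounded pointwise) limit of such simple $f_n$, so the bounded convergence theorem yields $\mE_\sigma(f_\sigma) = \lim_n \mE_\sigma((f_n)_\sigma)$ for each $\sigma$; a pointwise limit of $\cF$-measurable functions is $\cF$-measurable, completing the argument. The main obstacle is this second assertion, and within it the transfer from rectangles to arbitrary measurable sets. The Markov-kernel hypothesis on $\sigma \mapsto \mP_\sigma(A)$ is exactly what makes the rectangle computation $\cF$-measurable and is indispensable: without some joint measurability of the family $\{\mP_\sigma\}$ in $(\omega,\sigma)$ the integral $\sigma \mapsto \int_\Omega f(\omega,\sigma)\,\mP_\sigma(d\omega)$ need not be measurable at all, so I would make this assumption explicit as part of the construction.
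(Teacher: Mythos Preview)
Your argument is correct and complete. The paper itself does not give a proof here: it simply cites Billingsley's \emph{Probability and Measure}, Theorems~18.1 and~18.3, which establish precisely the two assertions (measurability of sections, and measurability of the partial integral against a transition kernel). Your proof is essentially the standard textbook argument behind those theorems---the insertion-map observation for the first part, and the Dynkin/monotone-class extension from rectangles for the second---so there is no substantive difference in approach, only in level of detail.

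One point worth highlighting: you are right that the second assertion requires the map $\sigma \mapsto \mP_\sigma(A)$ to be $\cF$-measurable for each $A \in \cA$ (the Markov-kernel condition). The paper does not state this explicitly in the lemma, but it is implicit in the cited Billingsley result and is needed for the construction of $\mP^*$ that follows. Your making it explicit is an improvement in rigour, not a departure from the paper's intent.
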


\begin{proof}
  See~\citet[p.~231,~Theorem~18.1, and p.~234,~Theorem~18.3]{bil:95}.
\end{proof}

Now let $\pi$ be a probability measure on $(F,\cF)$.  For
$A^{*} \in \cA \otimes \cF$, define
\begin{equation}
  \mP^{*}(A^{*}):= \int_{\cS} \int_{\Omega} \mathbbm{1}_{A^{*}}(\omega,\sigma) \mP_\sigma(d\omega) \pi(d\sigma).
\end{equation}

\begin{theorem}
  \label{thm:thp}
  $\mP^{*}$ is the unique probability measure on $\cA \otimes \cF$
  such that
  \begin{equation}
    \label{eq:thp1}
    \mP^{*}(A \times B)=  \int_{B} \mP_\sigma(A) \pi(d\sigma)
  \end{equation}
  for all $A \in \cA$ and $B \in \cF$.
\end{theorem}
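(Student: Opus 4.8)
The plan is to establish in turn that $\mP^{*}$ is well defined, that it is a probability measure, that it satisfies \eqref{thp1}, and finally that it is the unique such measure. The first point to pin down is well-definedness. For a fixed $A^{*} \in \cA \otimes \cF$ I would apply \lemref{product1} to the function $f = \mathbbm{1}_{A^{*}}$, which is bounded and $\cA \otimes \cF$-measurable. The lemma guarantees that each slice $\omega \mapsto \mathbbm{1}_{A^{*}}(\omega,\sigma)$ is $\cA$-measurable, so the inner integral $\int_{\Omega} \mathbbm{1}_{A^{*}}(\omega,\sigma)\,\mP_\sigma(d\omega)$ is a well-defined number in $[0,1]$ for every $\sigma$, and moreover that the map $\sigma \mapsto \int_{\Omega} \mathbbm{1}_{A^{*}}(\omega,\sigma)\,\mP_\sigma(d\omega)$ is bounded and $\cF$-measurable, hence integrable against $\pi$. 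This is exactly what makes the defining double integral meaningful.

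Next I would verify the measure axioms. Clearly $\mP^{*}(\emptyset)=0$, while taking $A^{*}=\Omega\times\cS$ gives $\mP^{*}(\Omega\times\cS)=\int_{\cS}\mP_\sigma(\Omega)\,\pi(d\sigma)=\pi(\cS)=1$, since each $\mP_\sigma$ and $\pi$ is a probability measure. For countable additivity, given pairwise disjoint $(A^{*}_n)$ with union $A^{*}$, I would write $\mathbbm{1}_{A^{*}}=\sum_n \mathbbm{1}_{A^{*}_n}$ and push the sum through both integrals. The monotone convergence theorem, applied to the increasing partial sums, moves the sum outside the inner $\mP_\sigma$-integral; a second application of monotone convergence—legitimate because each summand $\sigma \mapsto \int_{\Omega}\mathbbm{1}_{A^{*}_n}(\omega,\sigma)\,\mP_\sigma(d\omega)$ is nonnegative and $\cF$-measurable by \lemref{product1}—moves it outside the $\pi$-integral, yielding $\mP^{*}(A^{*})=\sum_n \mP^{*}(A^{*}_n)$.

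To obtain \eqref{thp1} I would specialise to a rectangle $A^{*}=A\times B$ and use the factorisation $\mathbbm{1}_{A\times B}(\omega,\sigma)=\mathbbm{1}_{A}(\omega)\,\mathbbm{1}_{B}(\sigma)$. The inner integral then collapses to $\mathbbm{1}_{B}(\sigma)\,\mP_\sigma(A)$, and integrating against $\pi$ gives $\int_{B}\mP_\sigma(A)\,\pi(d\sigma)$, as required. For uniqueness I would observe that the measurable rectangles $\{A\times B: A\in\cA,\ B\in\cF\}$ form a $\pi$-system that generates $\cA\otimes\cF$ and that contains $\Omega\times\cS$. Any probability measure satisfying \eqref{thp1} agrees with $\mP^{*}$ on every such rectangle, so the two probability measures coincide on a generating $\pi$-system; Dynkin's lemma (already invoked in the proof of \lemref{nstocaxiom3i}) then forces them to agree on all of $\cA\otimes\cF$.

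The only step demanding genuine care is the well-definedness-and-additivity argument, where the measurability of the partially integrated function and the iterated use of monotone convergence must be handled cleanly; once \lemref{product1} supplies the measurability, it does the heavy lifting and everything else reduces to routine verification.
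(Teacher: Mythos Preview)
Your proposal is correct and follows essentially the same route as the paper: well-definedness via \lemref{product1}, countable additivity via two applications of monotone convergence to the increasing partial sums of indicators, the rectangle formula from the factorisation $\mathbbm{1}_{A\times B}=\mathbbm{1}_A\mathbbm{1}_B$, and uniqueness from the fact that the rectangles form a $\pi$-system generating $\cA\otimes\cF$. The paper phrases the additivity step with the partial unions $B_n=\cup_{k=1}^n A_k$ rather than partial sums of indicators, and cites the uniqueness-of-extension theorem rather than Dynkin's lemma directly, but these are the same arguments in slightly different dress.
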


\begin{proof}
  By \lemref{product1}, $\mP^{*}: \cA \otimes \cF \rightarrow [0,1]$
  is a well defined function.  To prove that $\mP^{*}$ is a measure,
  we need to prove countable additivity.  So let $(A_{n}: n \in \mN)$
  be a sequence of disjoint sets in $\cA \otimes \cF$ and define
  $B_{n}:= \cup_{k=1}^{n}A_{k}$ an increasing sequence of sets.  By
  monotone convergence theorem, for each $\sigma \in \cS$, as
  $n \rightarrow \infty$,
  \begin{equation*}
    \int_{\Omega} \mathbbm{1}_{B_{n}}(\omega, \sigma) \mP_\sigma(d\omega) \uparrow 
    \int_{\Omega} \mathbbm{1}_{\cup_{k}B_{k}}(\omega, \sigma) \mP_\sigma(d\omega),
  \end{equation*}
  and hence
  \begin{equation}
    \label{eq:thp2}
    \int_{\cS}\int_{\Omega} \mathbbm{1}_{B_{n}}(\omega, \sigma) \mP_\sigma(d\omega)\pi(d\sigma) \uparrow 
    \int_{\cS}\int_{\Omega} \mathbbm{1}_{\cup_{k}B_{k}}(\omega, \sigma) \mP_\sigma(d\omega)\pi(d\sigma).
  \end{equation}
  Thus
  \begin{align*}
    \mP^{*}\left(\bigcup_{n}A_{n}\right)&= \mP^{*}\left(\bigcup_{n}B_{n}\right)  \\
                                        &= \lim_{n \rightarrow \infty} \mP^{*}(B_{n}) \,\, \text{by~\eqref{thp2}} \\
                                        &= \lim_{n \rightarrow \infty} \int_{\cS}\int_{\Omega}
                                          \sum_{k=1}^{n}{\mathbbm{1}_{A_{n}}} (\omega, \sigma)
                                          \mP_\sigma(d\omega)\pi(d\sigma) \,\, \text{since $A_{n}$ disjoint}\\
                                        &= \sum_{n}{\mP^{*}(A_{n})}.
  \end{align*}
  We can readily see that $\mP^{*}$ is a probability measure and,
  since $\mathbbm{1}_{A \times B}= \mathbbm{1}_{A}\mathbbm{1}_{B}$,
  property~\eqref{thp1} holds for all $A \in \cA$ and $B \in \cF$.
  Since $\cA \times \cF := \{A \times B: A \in \cA, B \in \cF\}$ is a
  $\pi$-system generating $\cA \otimes \cF$ and
  $\mP^{*}(\Omega \times \cS)=1 < \infty$, $\mP^{*}$ is uniquely
  determined by its values on $\cA \times \cF$, by the uniqueness of
  extension theorem~\citep[p.~42]{bil:95}.
\end{proof}

\begin{theorem}
  Let $f: \Omega \times \cS \rightarrow \mR$ be an
  $\cA \otimes \cF$-measurable integrable function.  Then
  \begin{equation}
    \label{eq:thpf}
    \mE^{*}(f)= \int_{\cS} \int_{\Omega} f(\omega,\sigma) \mP_\sigma(d\omega) \pi(d\sigma).
  \end{equation}
\end{theorem}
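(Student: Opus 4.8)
The plan is to use the standard measure-theoretic induction (the ``standard machine''), proving~\eqref{thpf} first for indicator functions, then for nonnegative simple functions, then for arbitrary nonnegative measurable functions, and finally for general integrable $f$. At each stage the right-hand side is an iterated integral, so the recurring concern is that the inner integral $\sigma \mapsto \int_\Omega f(\omega,\sigma)\,\mP_\sigma(d\omega)$ be $\cF$-measurable and suitably integrable; \lemref{product1} is exactly the tool that supplies this at the base of the induction.

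First I would treat $f=\mathbbm{1}_{A^{*}}$ with $A^{*}\in\cA\otimes\cF$. Here~\eqref{thpf} is nothing but the definition of $\mP^{*}$ together with $\mE^{*}(\mathbbm{1}_{A^{*}})=\mP^{*}(A^{*})$, so there is nothing to prove; \thmref{thp} has already established that $\mP^{*}$ is a well-defined probability measure. By linearity of the inner integral $\int_\Omega\cdot\,\mP_\sigma(d\omega)$ and of the outer integral $\int_\cS\cdot\,\pi(d\sigma)$, the identity immediately extends to every nonnegative simple function, and \lemref{product1} guarantees that the inner integral of such a function is bounded and $\cF$-measurable in $\sigma$.

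Next, for a nonnegative $\cA\otimes\cF$-measurable $f$, I would pick simple functions $f_n\uparrow f$ pointwise. For each fixed $\sigma$, the monotone convergence theorem gives $\int_\Omega f_n(\omega,\sigma)\,\mP_\sigma(d\omega)\uparrow\int_\Omega f(\omega,\sigma)\,\mP_\sigma(d\omega)$; since each inner integral is $\cF$-measurable in $\sigma$ by \lemref{product1}, its pointwise increasing limit is $\cF$-measurable as well. Applying monotone convergence a second time to the $\pi$-integral on the right, and monotone convergence for $\mE^{*}$ on the left (using $f_n\uparrow f$ and the simple-function case), yields~\eqref{thpf} as an identity in $[0,\infty]$ valid for all nonnegative $f$. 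Finally, for integrable $f$ I would write $f=f^{+}-f^{-}$; applying the nonnegative case to $|f|=f^{+}+f^{-}$ shows $\int_\cS\int_\Omega f^{\pm}(\omega,\sigma)\,\mP_\sigma(d\omega)\,\pi(d\sigma)=\mE^{*}(f^{\pm})\le\mE^{*}(|f|)<\infty$, so both iterated integrals are finite, and subtracting the two identities—now free of any $\infty-\infty$ ambiguity—gives the claim.

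The step I expect to require the most care is not any single computation but the bookkeeping of well-definedness on the right-hand side: checking that the inner integral is a genuine $\cF$-measurable, $\pi$-integrable function of $\sigma$ at each level of the induction. For simple (hence bounded) integrands this is precisely \lemref{product1}; for nonnegative $f$ it is secured by measurability of pointwise limits; and in the integrable case finiteness is obtained by applying the already-proven nonnegative identity to $|f|$. Once these measurability and finiteness conditions are verified, the two monotone-convergence passages and the final linear decomposition are routine.
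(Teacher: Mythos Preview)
Your proposal is correct and follows essentially the same approach as the paper: the standard measure-theoretic induction from indicators to simple functions to nonnegative measurable functions via monotone convergence (applied twice, to the inner and outer integrals), followed by the decomposition $f=f^{+}-f^{-}$ for general integrable $f$. If anything, you are more explicit than the paper about the measurability bookkeeping for the inner integral and about verifying finiteness before subtracting.
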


\begin{proof}
  Since $f$ is integrable, $\mE^{*}(f)=\mE^{*}(f^{+})-\mE^{*}(f^{-})$.
  Thus, it is enough to show~\eqref{thpf} for non-negative
  $\cA \otimes \cF$-measurable functions.  By definition of $\mP^{*}$
  in \thmref{thp}, \eqref{thpf} holds for all $f=\mathbbm{1}_{A}$,
  where $A \in \cA \otimes \cF$.  By linearity of the integrals, it
  also holds for functions of the form
  $f=\sum_{k=1}^{m}{a_{k}\mathbbm{1}_{A_{k}}}$, where
  $0 \leq a_{k} < \infty$, $A_{k} \in \cA \otimes \cF$ for all $k$ and
  $m \in \mN$.  We call functions of this form simple functions.  For
  any non-negative $\cA \otimes \cF$-measurable function $f$, consider
  the sequence of non-negative simple functions
  $f_{n}=\min\left\{\frac{\lfloor 2^{n}f\rfloor}{2^{n}},n\right\}$.
  Then~\eqref{thpf} holds for $f_{n}$ and $f_{n} \uparrow f$.  By
  monotone convergence, $\mE^{*}(f_{n}) \uparrow \mE^{*}(f)$ and, for
  each $\sigma \in \cS$,
  \begin{equation*}
    \int_{\Omega} f_{n}(\omega, \sigma) \mP_\sigma(d\omega) \uparrow 
    \int_{\Omega} f(\omega, \sigma) \mP_\sigma(d\omega),
  \end{equation*}
  and hence
  \begin{equation*}
    \int_{\cS}\int_{\Omega} f_{n}(\omega, \sigma) \mP_\sigma(d\omega)\pi(d\sigma) \uparrow 
    \int_{\cS}\int_{\Omega} f(\omega, \sigma) \mP_\sigma(d\omega)\pi(d\sigma).
  \end{equation*}
  Hence~\eqref{thpf} holds for $f$.
\end{proof}

In the previous theorems, we have rigorously constructed a new
probability measure $\mP^{*}$ on the measurable space
$(\Omega \times \cS, \cA \otimes \cF)$ and also obtained an expression
for the integral of a $\cA \otimes \cF$-measurable function under
$\mP^*$.  We now use this expression to justify the analogy between
\ecis\ and \scis\ in the case of a discrete regime space.

\subsection{Discrete regime space}
\label{sec:tcoadrs}

We now suppose that $\cS$ is discrete, and take $\cF$ to comprise all
subsets of $\cS$.  In particular, every decision variable is
$\cF$-measurable.  Morover in this case we can, and shall, require
$\pi(\{\sigma\})>0$ for all $\sigma\in\cS$.

We will keep the notation introduced above and for a stochastic
variable $X:(\Omega, \cA) \rightarrow (\mR, \cB_{X})$ we will denote
by
$X^{*}:(\Omega \times \cS,\cA \otimes \cF) \rightarrow (\mR, \cB_{X})$
the function defined by $X^{*}(\omega,\sigma)=X(\omega)$.  Similarly
for a decision variable $\Theta: \cS \rightarrow \Theta(\cS)$ we will
denote by
$\Theta^{*}:(\Omega \times \cS,\cA \otimes \cF) \rightarrow (\mR,
\cB_{X})$ the function defined by
$\Theta^{*}(\omega,\sigma)=\Theta(\sigma)$.  We will use similar
conventions for all the variables we consider.

Now~\eqref{thpf} becomes:
\begin{align*}
  \nonumber
  \mE^{*}(f)&=\sum_{\sigma \in \cS} \int_{\Omega} f(\omega,\sigma) \mP_\sigma(d\omega) \pi(\sigma) \\
  \nonumber &= \sum_{\sigma \in \cS} \mE_\sigma(f_\sigma) \pi(\sigma).
\end{align*}

\begin{remark}
  Note that for any $X^{*}$ as above,
  $\sigma(X^{*})=\sigma(X) \times \{\cS\}$.  Similarly, for any
  $\Theta^{*}$ as above,
  $\sigma(\Theta^{*})=\{\Omega\} \times \sigma(\Theta)$.  Thus
  \begin{align*}
    \sigma(X^{*},\Theta^{*})&= \sigma(\{A_{X^{*}} \cap A_{\Theta^{*}}:
                              A_{X^{*}} \in \sigma(X^{*}), A_{\Theta^{*}} \in
                              \sigma(\Theta^{*})\})
                              \,\, \text{(see~\citet[p.~73]{res:14})} \\
                            &= \sigma(\{A_{X^{*}} \cap A_{\Theta^{*}}: A_{X^{*}} \in \sigma(X) \times \{\cS\}, A_{\Theta^{*}} \in \{\Omega\} \times                                \sigma(\Theta)\}) \\
                            &=\sigma(\{ (A_{X} \times \cS) \cap (\Omega \times A_{\Theta}): A_{X} \in \sigma(X), A_{\Theta} \in \sigma(\Theta)\}) \\
                            &=\sigma(\{ A_{X} \times A_{\Theta}: A_{X} \in \sigma(X), A_{\Theta} \in \sigma(\Theta)\}) \\
                            &=:\sigma(X) \otimes \sigma(\Theta).
  \end{align*}
  Thus, for any $\sigma \in \cS$ and $A_{X^*} \in \sigma(X^*)$, the
  function
  $\mathbbm{1}_{A_{X^*}}^{\sigma}: \Omega \rightarrow \{0,1\}$ defined
  by
  $\mathbbm{1}_{A_{X^*}}^{\sigma}(\omega):=\mathbbm{1}_{A_{X^*}}(\omega,
  \sigma)$ does not depend on $\sigma$.  It is equal to
  $\mathbbm{1}_{A_{X}}$, for $A_{X} \in \sigma(X)$ such that
  $A_{X^*}=A_{X} \times \{\cS\}$.  Also for
  $A_{X^*,\Theta^*} \in \sigma(X^*,\Theta^*)$, the function
  $\mathbbm{1}_{A_{X^*,\Theta^*}}$ is
  $(\sigma(X) \otimes \sigma(\Theta))$-measurable, and by
  \lemref{product1}, for $\sigma \in \cS$, the function
  $\mathbbm{1}_{A_{X^*,\Theta^*}}^{\sigma}: \Omega \rightarrow
  \{0,1\}$ defined by
  $\mathbbm{1}_{A_{X^*,\Theta^*}}^{\sigma}(\omega):=\mathbbm{1}_{A_{X^*,\Theta^*}}(\omega,
  \sigma)$ is $\sigma(X)$-measurable.
  $\mathbbm{1}_{A_{X^*,\Theta^*}}^{\sigma}(\omega)$ is equal to
  $\mathbbm{1}_{A_{X}^{\sigma}}$ for $A_{X}^{\sigma} \in \sigma(X)$
  such that $A_{X}^{\sigma}$ is the section of $A_{X^*,\Theta^*}$ at
  $\sigma$.
\end{remark}

\begin{theorem}
  \label{thm:equivalence}
  Let $X,Y,Z$ be $\cA$-measurable functions on $\Omega$, and let
  $\Phi,\Theta$ be decision variables on $\cS$, where $\cS$ is
  discrete.  Suppose that $\Theta$ is complementary to $\Phi$.  Also,
  let $X^*$, $Y^*$, $Z^*$ and $\Phi^*$, $\Theta^*$ be the
  corresponding $\cA \otimes \cF$-measurable functions.  Then
  \ind{X}{(Y,\Theta)}{(Z,\Phi)} if and only if
  \sind{X^*}{(Y^*,\Theta^*)}{(Z^*,\Phi^*)}.
\end{theorem}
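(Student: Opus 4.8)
The plan is to translate both sides of the claimed equivalence into the \emph{same} family of regime-wise identities and then read one off from the other. I would take as the working form of \ind{X}{(Y,\Theta)}{(Z,\Phi)} the indicator characterization \proporef{nsprop1}\itref{nspr4}: for every $\phi\in\Phi(\cS)$ and every $A_X\in\sigma(X)$ there is a $\sigma(Z)$-measurable $w_\phi(Z)$ with $\mE_\sigma(\mathbbm{1}_{A_X}\mid Y,Z)=w_\phi(Z)$ \as{\mP_\sigma} for all $\sigma\in\Phi^{-1}(\phi)$. On the other side I would use the indicator form of \defref{scidef}: \sind{X^*}{(Y^*,\Theta^*)}{(Z^*,\Phi^*)} holds iff $\mE^*(\mathbbm{1}_{A_{X^*}}\mid Y^*,\Theta^*,Z^*,\Phi^*)=\mE^*(\mathbbm{1}_{A_{X^*}}\mid Z^*,\Phi^*)$ \as{\mP^*} for all $A_{X^*}\in\sigma(X^*)$. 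Two structural facts drive the reduction. First, since $\cS$ is discrete with $\pi(\{\sigma\})>0$, a $\mP^*$-null set is exactly one whose $\sigma$-section is $\mP_\sigma$-null for every $\sigma$, so I can pass freely between ``\as{\mP^*}'' and ``\as{\mP_\sigma} for each $\sigma$''. Second, since $\sigma(X^*)=\sigma(X)\times\{\cS\}$, the lifted indicator $\mathbbm{1}_{A_{X^*}}$ is constant in $\sigma$, equal to $\mathbbm{1}_{A_X}$ for the corresponding $A_X\in\sigma(X)$.

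The crux is to compute the two conditional expectations in the \sci\ statement. Because $\Theta$ is complementary to $\Phi$, the pair $(\Theta,\Phi)$ determines the regime, so in the discrete case $\sigma(\Theta^*,\Phi^*)=\{\Omega\}\times\cF$; consequently $\sigma(Y^*,\Theta^*,Z^*,\Phi^*)$ consists exactly of the sets whose every $\sigma$-section lies in $\sigma(Y,Z)$, and a bounded $\sigma(Z^*,\Phi^*)$-measurable function necessarily has the form $v_{\Phi(\sigma)}(Z(\omega))$ with each $v_\phi$ being $\sigma(Z)$-measurable. Using the discrete integral formula $\mE^*(f)=\sum_{\sigma\in\cS}\mE_\sigma(f_\sigma)\,\pi(\sigma)$ from \eqref{thpf}, together with \lemref{product1} for sectionwise measurability, I would then establish
$$\mE^*(\mathbbm{1}_{A_{X^*}}\mid Y^*,\Theta^*,Z^*,\Phi^*)(\omega,\sigma)=\mE_\sigma(\mathbbm{1}_{A_X}\mid Y,Z)(\omega)\quad\text{\as{\mP^*}}.$$
The verification tests the candidate against an arbitrary bounded $\sigma(Y^*,\Theta^*,Z^*,\Phi^*)$-measurable $h(\omega,\sigma)=h_\sigma(\omega)$: the defining integral identity splits as $\sum_\sigma\pi(\sigma)\int_\Omega\mathbbm{1}_{A_X}h_\sigma\,d\mP_\sigma$, which equals $\sum_\sigma\pi(\sigma)\int_\Omega\mE_\sigma(\mathbbm{1}_{A_X}\mid Y,Z)\,h_\sigma\,d\mP_\sigma$ by the definition of $\mE_\sigma(\cdot\mid Y,Z)$, and the candidate is $\sigma(Y^*,\Theta^*,Z^*,\Phi^*)$-measurable because $\{\Omega\}\times\cF$ separates each regime, so versions may be selected regime by regime and pasted together.

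With the displayed identity in hand both implications are essentially formal. For the direction \sci\ $\Rightarrow$ \eci, the \sci\ identity forces $\mE_\sigma(\mathbbm{1}_{A_X}\mid Y,Z)$ to coincide with a $\sigma(Z^*,\Phi^*)$-measurable function, which by the structural description is of the form $w_{\Phi(\sigma)}(Z)$; reading this off for each $\phi$ is precisely \proporef{nsprop1}\itref{nspr4}. Conversely, given \eci, \remref{nstoc1} shows the same $w_\phi(Z)$ is also a version of $\mE_\sigma(\mathbbm{1}_{A_X}\mid Z)$, and testing against $\sigma(Z^*,\Phi^*)$-measurable functions exactly as above shows $w_{\Phi(\sigma)}(Z)$ is a version of $\mE^*(\mathbbm{1}_{A_{X^*}}\mid Z^*,\Phi^*)$; combined with the displayed formula this yields the \sci\ identity. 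I expect the displayed identity itself to be the main obstacle: the measure-theoretic bookkeeping of joint $\cA\otimes\cF$-measurability and the regime-by-regime selection and pasting of versions of conditional expectations is where the real work lies, and it is made tractable only by the discreteness of $\cS$ and the positivity of $\pi$ — the two implications are then short.
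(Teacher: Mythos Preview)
Your proposal is correct and follows essentially the same route as the paper: both arguments rest on the discrete integral formula $\mE^*(f)=\sum_{\sigma}\pi(\sigma)\,\mE_\sigma(f_\sigma)$, the sectionwise description of $\sigma(Y^*,\Theta^*,Z^*,\Phi^*)$-measurability when $\cS$ is countable with $\cF=2^{\cS}$, and the positivity $\pi(\{\sigma\})>0$ to pass from $\mP^*$-a.s.\ statements back to $\mP_\sigma$-a.s.\ ones. The only organisational difference is that you isolate the disintegration formula $\mE^*(\mathbbm{1}_{A_{X^*}}\mid Y^*,\Theta^*,Z^*,\Phi^*)(\omega,\sigma)=\mE_\sigma(\mathbbm{1}_{A_X}\mid Y,Z)(\omega)$ as a standalone lemma and read both implications off it, whereas the paper works directly with the integral characterisation of conditional expectation---for $(\Rightarrow)$ by verifying that $w(Z^*,\Phi^*):=w_{\Phi^*}(Z^*)$ satisfies the defining identity against all of $\sigma(Y^*,\Theta^*,Z^*,\Phi^*)$, and for $(\Leftarrow)$ by testing the $\mP^*$-conditional expectation against the singleton-regime sets $A_{Y,Z}\times\{\sigma_o\}$ and dividing by $\pi(\sigma_o)$.
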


\begin{proof}\quad\\
  $\Rightarrow$.  Since \ind{X}{(Y,\Theta)}{(Z,\Phi)}, by
  \proporef{nsprop1}, for all $\phi \in \Phi(\cS)$ and all
  $A_{X} \in \sigma(X)$, there exists a function $w_{\phi}(Z)$ such
  that, for all $\sigma \in \Phi^{-1}(\phi)$,
  \begin{equation}
    \label{eq:equivalence1a}
    \mE_\sigma(\mathbbm{1}_{A_{X}}\mid Y,Z)=w_{\phi}(Z) \,\, \text{\as{\mP_\sigma}},
  \end{equation}
  \ie,
  \begin{equation}
    \label{eq:equivalence1b}
    \mE_\sigma(\mathbbm{1}_{A_{X}}\mathbbm{1}_{A_{Y,Z}})=\mE_\sigma\left\{w_{\phi}(Z)\mathbbm{1}_{A_{Y,Z}}\right\}
    \,\, \text{whenever $A_{Y,Z} \in \sigma(Y,Z)$}.
  \end{equation}
  To show that \ind{X^*}{(Y^*,\Theta^*)}{(Z^*,\Phi^*)}, by
  \proporef{stocci} we need to show that, for all
  $A_{X^{*}} \in \sigma(X^*)$, there exists a function $w(Z^*,\Phi^*)$
  such that
  \begin{equation*}
    \label{eq:equivalence2a}
    \mE^*\left(\mathbbm{1}_{A_{X^{*}}} \mid Y^{*},\Theta^*,Z^{*},\Phi^*\right)
    =w(Z^*,\Phi^*) \,\, \text{\rm{a.s.}},
  \end{equation*}
  \ie,
  \begin{equation}
    \label{eq:equivalence2b}
    \mE^*(\mathbbm{1}_{A_{X^*}}\mathbbm{1}_{A_{Y^*,\Theta^*,Z^*,\Phi^*}})
    =\mE^*\left\{w(Z^*, \Phi^*)\mathbbm{1}_{A_{Y^*,\Theta^*,Z^*,\Phi^*}}\right\}
  \end{equation}
  whenever
  $A_{Y^*,\Theta^*,Z^*,\Phi^*} \in \sigma(Y^*,\Theta^*,Z^*,\Phi^*)$.

  Let $A_{X^{*}} \in \sigma(X^*)$ and define
  $w(z^{*},\phi^{*})=w_{\phi^{*}}(z^{*})$ as in~\eqref{equivalence1b}.
  Then for all
  $A_{Y^*,\Theta^*,Z^*,\Phi^*} \in \sigma(Y^*,\Theta^*,Z^*,\Phi^*)$,
  \begin{align*}
    \mE^*(\mathbbm{1}_{A_{X^*}}\mathbbm{1}_{A_{Y^*,\Theta^*,Z^*,\Phi^*}})
    &=\sum_{\sigma \in \cS}{\mE_\sigma(\mathbbm{1}_{A_{X}}\mathbbm{1}_{A_{Y,Z}^{\sigma}})\pi(\sigma)}\\
    &=\sum_{\sigma \in \cS}{\mE_\sigma\left\{w_{\phi}(Z)\mathbbm{1}_{A_{Y,Z}^{\sigma}}\right\} \pi(\sigma)} \,\, 
      \text{by~\eqref{equivalence1b}}\\
    &=\mE^*\left\{w(Z^*,
      \Phi^*)\mathbbm{1}_{A_{Y^*,\Theta^*,Z^*,\Phi^*}}\right\}
  \end{align*}
  which proves~\eqref{equivalence2b}.
  
  \noindent $\Leftarrow$.  To show that \ind{X}{(Y,\Theta)}{(Z,\Phi)},
  let $\phi \in \Phi(\cS)$ and $A_{X} \in \sigma(X)$.  Then, for any
  $\sigma_{o} \in \Phi^{-1}(\phi)$,
  \begin{align*}
    \mE_{\sigma_{o}}(\mathbbm{1}_{A_{X}}\mathbbm{1}_{A_{Y,Z}})\pi(\sigma_{o})
    &=\sum_{\sigma \in
      \cS}{\mE_\sigma\left\{\mathbbm{1}_{A_{X}}\mathbbm{1}_{A_{Y,Z}}\mathbbm{1}_{\sigma_o}(\sigma)\right\}
      \pi(\sigma)} \\
    &=\mE^*(\mathbbm{1}_{A_{X^*}}\mathbbm{1}_{A_{Y,Z}\times\{\sigma_o\}}) \\
    &=\mE^*\left\{w(Z^*,\Phi^*)\mathbbm{1}_{A_{Y,Z}\times\{\sigma_o\}}\right\} \,\, \text{by~\eqref{equivalence2b}}\\
    &=\sum_{\sigma \in
      \cS}{\mE_\sigma\left\{w(Z;\Phi(\sigma))\mathbbm{1}_{A_{Y,Z}}\mathbbm{1}_{\sigma_o}(\sigma)\right\}
      \pi(\sigma)} \\
    &=\mE_{\sigma_o}\left\{w(Z;\Phi(\sigma_o))\mathbbm{1}_{A_{Y,Z}}\right\}\pi(\sigma_o).
  \end{align*}
  Since $\pi(\sigma_o)>0$, we have proved~\eqref{equivalence1b} with
  $w_{\phi}(z)=w(z,\phi)$.
\end{proof}

\begin{corollary}
  \label{cor:nstoc1}
  Suppose we are given a collection of \ecis\ properties as in the
  form of \defref{nstocdef1}.  If the regime space $\cS$ is discrete,
  any deduction made using the axioms of conditional independence will
  be valid, so long as, in both premisses and conclusions, no
  non-stochastic variables appear in the left-most term in a
  conditional independence statement (we are however allowed to
  violate this condition in intermediate steps of an argument).
\end{corollary}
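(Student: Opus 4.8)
The plan is to reduce the entire deduction to one carried out with ordinary \scis\ on the product space $(\Omega \times \cS, \cA \otimes \cF)$ equipped with $\mP^{*}$, where the separoid axioms are already available through \thmref{saxioms}. The bridge between the two worlds is \thmref{equivalence}: for discrete $\cS$ it identifies an \ecis\ statement \ind{X}{(Y,\Theta)}{(Z,\Phi)} having a purely stochastic left-most term with the \scis\ statement \sind{X^*}{(Y^*,\Theta^*)}{(Z^*,\Phi^*)} among the lifted variables under $\mP^{*}$. The crucial structural point is that on the product space the decision variables $\Theta^{*},\Phi^{*},\ldots$ are genuine $\cA \otimes \cF$-measurable random variables, on exactly the same footing as the stochastic $X^{*},Y^{*},\ldots$; hence the collection of all starred variables, ordered by $\preceq$, forms a separoid under $\cip_{s}$, and each of P$1^{s}$--P$5^{s}$ holds for them without restriction.

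First I would translate each premiss. By hypothesis every premiss has a purely stochastic left-most term, so it is of the canonical shape of \defref{nstocdef1} and \thmref{equivalence} applies, converting it into a true \scis\ statement about the starred variables. Next I would replay the given formal deduction verbatim, now reading each symbol as its starred counterpart and each abstract axiom P$i$ as the concrete property P$i^{s}$ of \thmref{saxioms}. Since $\cip_{s}$ is a full (symmetric) separoid on the starred variables, every step is a valid implication among genuine \scis\ statements, and this remains so even when an intermediate line carries a starred decision variable in its left-most position: such a line is a perfectly legitimate \scis\ statement, even though it admits no \ecis\ reading. This is precisely why the stated condition may be violated at intermediate steps --- those steps never leave the stochastic world, where no term is privileged.

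Finally I would translate the conclusion back. By hypothesis the conclusion again has a purely stochastic left-most term, so the \scis\ statement reached is of the form \sind{X^*}{(Y^*,\Theta^*)}{(Z^*,\Phi^*)}, and the reverse implication of \thmref{equivalence} returns the desired \ecis\ statement \ind{X}{(Y,\Theta)}{(Z,\Phi)}. The one point demanding care --- and the reason the restriction is imposed exactly at the premisses and the conclusion --- is that \thmref{equivalence} speaks only about statements in the canonical shape of \defref{nstocdef1}, whose left-most term is stochastic and whose two decision arguments are complementary. I would therefore check that each premiss and the final conclusion are already (or can be rewritten) in this shape, in particular that their decision variables are complementary (appending the identity regime indicator $\Sigma$ to the conditioning term where a decision variable is otherwise absent, so that complementarity holds trivially); at the intermediate stochastic steps no such check is needed. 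The discreteness of $\cS$ enters only through \thmref{equivalence} itself, where positivity of $\pi$ on each singleton is used, and this is the sole place the hypothesis is required.
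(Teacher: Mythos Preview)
Your proposal is correct and follows exactly the approach the paper intends: the paper states this corollary without proof, treating it as an immediate consequence of \thmref{equivalence}, and your argument spells out precisely that implicit reasoning---lift the premisses to \scis\ on the product space via \thmref{equivalence}, run the separoid deduction there using \thmref{saxioms} (where starred decision variables are ordinary random variables, so intermediate steps with them on the left pose no difficulty), and then translate the conclusion back. Your additional remarks on complementarity and on where discreteness enters are accurate refinements of what the paper leaves tacit.
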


\begin{corollary}
  \label{cor:nstoc2}
  In the case of a discrete regime space, we have:\\[1ex]
  \noindent P$4''$: \ind{X}{(Y,\Theta)}{(Z,\Phi)} and $W \preceq X$
  $\Rightarrow$ \ind{X}{(Y,\Theta)}{(Z,W,\Phi)}.
\end{corollary}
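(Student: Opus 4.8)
The plan is to reduce this to the purely stochastic axioms of \thmref{saxioms} by passing to the product space, exactly as licensed by \corref{nstoc1}. The key observation is that P$4''$ is the ``mirror image'' of Weak Union (P$4^s$), and the natural stochastic derivation is to symmetrise, apply Weak Union, and symmetrise back. In the \eci\ world this route is not directly available, because the intermediate statements carry the mixed term $(Y,\Theta)$ in the left-most position, where symmetry is unavailable; but since $\cS$ is discrete, \corref{nstoc1} explicitly permits such intermediate violations, so the derivation becomes legitimate.

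Concretely, I would first translate the premiss \ind{X}{(Y,\Theta)}{(Z,\Phi)} into its stochastic counterpart \sind{X^*}{(Y^*,\Theta^*)}{(Z^*,\Phi^*)} on $(\Omega\times\cS,\cA\otimes\cF)$ via \thmref{equivalence}, noting that $W\preceq X$ yields $W^*\preceq X^*$. Then, working entirely with the stochastic separoid axioms, I would apply P$1^s$ (Symmetry) to get \sind{(Y^*,\Theta^*)}{X^*}{(Z^*,\Phi^*)}, apply P$4^s$ (Weak Union) with $W^*\preceq X^*$ to move $W^*$ into the conditioning set, obtaining \sind{(Y^*,\Theta^*)}{X^*}{(Z^*,W^*,\Phi^*)}, and finally apply P$1^s$ once more to reach \sind{X^*}{(Y^*,\Theta^*)}{(Z^*,W^*,\Phi^*)}. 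Translating back through \thmref{equivalence} then gives the desired conclusion \ind{X}{(Y,\Theta)}{(Z,W,\Phi)}. Because both the premiss and the conclusion have the purely stochastic $X$ in their left-most term, the round trip through \thmref{equivalence} is valid at the endpoints; only the two intermediate symmetric statements place $(Y,\Theta)$ on the left, and these never need to be read as \eci\ statements, living purely on the product space where every separoid axiom holds.

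I do not anticipate a genuine technical obstacle, since all of the work has been done in \thmref{equivalence} and \corref{nstoc1}; in the write-up I would simply invoke \corref{nstoc1} and exhibit the three-line deduction (Symmetry, Weak Union, Symmetry). The only delicate point worth flagging explicitly is \emph{why} the symmetric intermediate steps are admissible despite violating the left-most-term restriction: the resolution is structural rather than computational, in that the discreteness of $\cS$ makes \thmref{equivalence} an exact equivalence, so the entire argument can be run in the stochastic model where P$1^s$ is unconditionally valid.
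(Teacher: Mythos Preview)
Your proposal is correct and is exactly the argument the paper intends: \corref{nstoc2} is stated without proof precisely because it is an immediate instance of \corref{nstoc1}, and your three-step derivation (P$1^s$, P$4^s$, P$1^s$) on the product space, bookended by \thmref{equivalence}, is the natural way to spell this out. Your explicit remark that the intermediate symmetric statements live purely in the stochastic model---so the left-most-term restriction is irrelevant there---is exactly the content of the parenthetical clause in \corref{nstoc1}.
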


\section{Other approaches}
\label{sec:nodiscrete}

Inspecting the proof of \thmref{equivalence}, we see that the
assumption of discreteness of the regime space $\cS$ is crucial.  If
we have an uncountable regime space $\cS$ and assign a distribution
$\pi$ over it, the arguments for the forward direction will still
apply but the arguments for the reverse direction will not.
Intuitively, this is because~\eqref{equivalence2a} holds almost
everywhere but not necessarily everywhere.  Thus we cannot immediately
extend it to hold for all $\sigma \in \cS$ as
in~\eqref{equivalence1a}.  However, using another, more direct,
approach we can still deduce P$4''$ if we impose appropriate
conditions.  In particular this will hold if the stochastic variables
are discrete.  Alternatively, we can use a domination condition on the
set of regimes.

\subsection{Discrete variables}
\label{sec:discar}
\begin{proposition}
  \label{propo:nstocaxiom4disc}
  Let $X$, $Y$, $Z$, $W$ be discrete stochastic variables, $\Phi$,
  $\Theta$ be decision variables.
  Then\\[1ex]
  \noindent P$4''$: \ind{X}{(Y,\Theta)}{(Z,\Phi)} and $W \preceq X$
  $\Rightarrow$ \ind{X}{(Y,\Theta)}{(Z,W,\Phi)}.
\end{proposition}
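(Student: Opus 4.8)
The plan is to establish the augmented statement directly, bypassing the Bayesian embedding of \secref{bayes} and instead exploiting discreteness: for discrete variables every conditional expectation may be evaluated atom by atom, so an ``\as{\mP_\sigma}'' assertion is nothing more than an equality at each point carrying positive $\mP_\sigma$-mass. Since $W \preceq X$, write $W = g(X)$ with $g$ measurable (by \lemref{measurability}). The governing idea is that conditioning on the extra variable $W$ can be recovered from the premise \ind{X}{(Y,\Theta)}{(Z,\Phi)} as a ratio of two conditional expectations, each of which is itself controlled by that premise.

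Concretely, fix $\phi \in \Phi(\cS)$ and a bounded measurable $h$. For each value $w$ in the (countable) range of $W$, I would apply \defref{nstocdef1} twice: once to the bounded measurable function $h_w(X) := h(X)\,\mathbbm{1}_{g^{-1}(w)}(X)$, yielding $a_{w,\phi}(Z)$ with $\mE_\sigma\{h_w(X)\mid Y,Z\} = a_{w,\phi}(Z)$ \as{\mP_\sigma} for all $\sigma \in \Phi^{-1}(\phi)$; and once to $\mathbbm{1}_{g^{-1}(w)}(X)$, yielding $b_{w,\phi}(Z)$ with $\mE_\sigma\{\mathbbm{1}_{g^{-1}(w)}(X)\mid Y,Z\} = b_{w,\phi}(Z)$ \as{\mP_\sigma}. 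Because $W$ is discrete, assembling these over the countably many values $w$ gives a single function $w'_\phi(Z,W)$, defined by $w'_\phi(z,w) := a_{w,\phi}(z)/b_{w,\phi}(z)$ where $b_{w,\phi}(z) > 0$ and, say, $0$ otherwise; note that it depends on $\sigma$ only through $\phi$, as required.

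The core computation is the atomwise identity, valid whenever $\mP_\sigma(Y=y,Z=z,W=w) > 0$,
\begin{equation*}
  \mE_\sigma\{h(X)\mid Y=y,Z=z,W=w\}
  = \frac{\mE_\sigma\{h(X)\mathbbm{1}_{g^{-1}(w)}(X)\mid Y=y,Z=z\}}
         {\mE_\sigma\{\mathbbm{1}_{g^{-1}(w)}(X)\mid Y=y,Z=z\}},
\end{equation*}
obtained by expressing both sides through the elementary joint probabilities and cancelling $\mP_\sigma(Y=y,Z=z)$. On such an atom the event $\{Y=y,Z=z\}$ has positive mass, so the two applications of the premise identify the numerator and denominator with $a_{w,\phi}(z)$ and $b_{w,\phi}(z)$; and since $\mP_\sigma(Y=y,Z=z,W=w) > 0$ we have $b_{w,\phi}(z) = \mP_\sigma(W=w \mid Y=y,Z=z) > 0$, so the quotient is legitimate. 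Thus $\mE_\sigma\{h(X)\mid Y,Z,W\} = w'_\phi(Z,W)$ at every positive-mass atom, hence \as{\mP_\sigma}, which is precisely \ind{X}{(Y,\Theta)}{(Z,W,\Phi)}.

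I do not anticipate a deep conceptual obstacle; the labour is organisational---coherently selecting the versions $a_{w,\phi},b_{w,\phi}$ across the countably many values of $w$ and confirming that the atomwise equalities glue into one \as{} statement. The single genuinely delicate point is the ratio representation itself: it amounts to conditioning on the event $\{W=w\}$, which is harmless here because discreteness of the stochastic variables gives it positive probability, but which would collapse for continuous variables. It is this---discreteness of $X,Y,Z,W$, rather than of the regime space as in \corref{nstoc2}---that makes the argument go through.
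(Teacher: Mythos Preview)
Your proposal is correct and follows essentially the same route as the paper: both arguments exploit discreteness to work atom by atom and recover $\mE_\sigma\{h(X)\mid Y,Z,W\}$ as a ratio of two conditional expectations given $(Y,Z)$---one applied to $h(X)\mathbbm{1}_{\{W=w\}}$ and one to $\mathbbm{1}_{\{W=w\}}$---each of which the premise identifies with a function of $Z$ depending only on $\phi$. The paper phrases this via indicators $\mathbbm{1}_{\{X=x\}}$ and the integral characterisation of conditional expectation, whereas you work with a general bounded $h$ directly on atoms, but these are cosmetic variants of the same argument.
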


\begin{proof}
  To show that \ind{X}{(Y,\Theta)}{(Z,W,\Phi)} we need to show that,
  for all $\phi \in \Phi(\cS)$ and all $A_{X} \in \sigma(X)$, there
  exists $w_{\phi}(Z,W)$ such that, for all
  $\sigma \in \Phi^{-1}(\phi)$,
  \begin{equation*}
    \mE_\sigma\left(\mathbbm{1}_{A_{X}}\mid Y,Z,W\right)=w_{\phi}(Z,W) \,\, \text{\as{\mP_\sigma}},
  \end{equation*}
  \ie,
  \begin{equation}
    \label{eq:nstocaxiom4bii}
    \mE_\sigma\left(\mathbbm{1}_{A_{X}}\mathbbm{1}_{A_{Y,Z,W}}\right)
    =\mE_\sigma\left\{w_{\phi}(Z,W) \mathbbm{1}_{A_{Y,Z,W}}\right\}
    \,\, \text{whenever $A_{Y,Z,W} \in \sigma(Y,Z,W)$}.
  \end{equation}
  Observe that it is enough to show~\eqref{nstocaxiom4bii} for
  $A_{Y,Z,W} \in \sigma(Y,Z,W)$ such that $\mP_\sigma(A_{Y,Z,W}) > 0$.
  Also since $X$, $Y$, $Z$ and $W$ are discrete we need to
  show~\eqref{nstocaxiom4bii} only for sets of the form $\{X=x\}$ and
  $\{Y=y,Z=z,W=w\}$.
  Thus it is enough to show that, for all $\phi \in \Phi(\cS)$ and all
  $x$, there exists $w_{\phi}(Z,W)$ such that, for all
  $\sigma \in \Phi^{-1}(\phi)$, and all $y,z,w$ such that
  $\mP_\sigma({Y=y,Z=z,W=w}) > 0$,
  \begin{equation*}
    \mE_\sigma\left(\mathbbm{1}_{\{X=x\}}\mathbbm{1}_{\{Y=y,Z=z,W=w\}}\right)
    =\mE_\sigma\left\{w_{\phi}(Z,W)\mathbbm{1}_{\{Y=y,Z=z,W=w\}}\right\}.
  \end{equation*}

  Let $\phi \in \Phi(\cS)$.  For $\sigma \in \Phi^{-1}(\phi)$ and all
  $x,y,z,w$ such that $\mP_\sigma(Y=y,Z=z,W=w) > 0$,
  \begin{align}
    \mE_\sigma\left(\mathbbm{1}_{\{X=x\}}\mathbbm{1}_{\{Y=y,Z=z,W=w\}}\right)
    &=\mP_\sigma\left(X=x, Y=y, Z=z, W=w\right) \nonumber \\
    &=\mP_\sigma\left(X=x \mid Y=y, Z=z, W=w\right)\mP_\sigma\left(Y=y, Z=z, W=w\right) \nonumber \\
    &=\frac{\mP_\sigma\left(X=x, W=w \mid Y=y, Z=z\right)}
      {\mP_\sigma\left(W=w \mid Y=y, Z=z\right)} \,\mP_\sigma\left(Y=y,
      Z=z, W=w\right).
      \label{eq:nstocaxiom4biv}
  \end{align}
  Since \ind{X}{(Y,\Theta)}{(Z,\Phi)} and $W \preceq X$, there exist
  $w_{\phi}^{1}(Z)$ and $w_{\phi}^{2}(Z)$ such that
  \begin{equation*}
    \mE_\sigma\left(\mathbbm{1}_{\{X=x, W=w\}}\mid Y,Z\right)=w_{\phi}^{1}(Z) \,\, \text{\as{\mP_\sigma}}
  \end{equation*}
  and
  \begin{equation*}
    \mE_\sigma\left(\mathbbm{1}_{\{W=w\}}\mid Y,Z\right)=w_{\phi}^{2}(Z) \,\, \text{\as{\mP_\sigma}}
  \end{equation*} 
  where $w_{\phi}^{1}(Z)=0$ unless $w=W(x)$.

  Define
  \begin{equation*}
    w_{\phi}(z)= \left\{
      \begin{array}{rl}
        \frac{w_{\phi}^{1}(z)}{w_{\phi}^{2}(z)} & \text{if } w_{\phi}^{2}(z) \neq 0,\\
        0 & \text{if } w_{\phi}^{2}(z)=0
      \end{array} \right.
  \end{equation*}
  and note that $w_{\phi}^{2}(z) \neq 0$ when
  $\mP_\sigma(Y=y,Z=z,W=w) \neq 0$.  Also note that since
  $w_{\phi}(Z)$ is $\sigma(Z)$-measurable it is also
  $\sigma(W,Z)$-measurable.  Returning to!\eqref{nstocaxiom4biv} we
  get
  \begin{align*}
    \mE_\sigma\left(\mathbbm{1}_{\{X=x\}}\mathbbm{1}_{\{Y=y,Z=z,W=w\}}\right)
    &=w_{\phi}(z)\mP_\sigma\left(Y=y, Z=z, W=w\right) \\
    &=\mE_\sigma\left\{w_{\phi}(Z)\mathbbm{1}_{\{Y=y,Z=z,W=w\}}\right\}
  \end{align*}
  which concludes the proof.
\end{proof}

\subsection{Dominating regime}

\begin{definition}[Dominating regime]
  Let $\cS$ index a set of probability measures on $(\Omega,\cA)$.
  For $\cS_0 \subseteq \cS$, we say that $\sigma^{*} \in \cS_0$ is a
  \emph{dominating regime in $\cS_0$}, if, for all $\sigma \in \cS_0$,
  $\mP_\sigma \ll \mP_{\sigma^{*}}$; that is,
  \begin{equation*}
    \mP_{\sigma^*}(A)=0 \Rightarrow \mP_{\sigma}(A)=0 
    \,\, \text{for all $A \in \cA$ and all $\sigma\in\cS_0$.}
  \end{equation*} 
\end{definition}

\begin{proposition}
  \label{propo:nstocaxiom4}
  Let $X$, $Y$, $Z$, $W$ be stochastic variables, $\Phi$, $\Theta$ be
  decision variables.  Suppose that, for all $\phi \in \Phi(\cS)$,
  there exists a dominating regime
  $\sigma_{\phi} \in \Phi^{-1}(\phi)$.
  Then\\[1ex]
  \noindent P$4''$: \ind{X}{(Y,\Theta)}{(Z,\Phi)} and $W \preceq X$
  $\Rightarrow$ \ind{X}{(Y,\Theta)}{(Z,W,\Phi)}.
\end{proposition}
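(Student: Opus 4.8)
The plan is to fix $\phi \in \Phi(\cS)$, let $\sigma^{*} := \sigma_{\phi}$ be the dominating regime in $\Phi^{-1}(\phi)$, and build a \emph{single} candidate version $v_{\phi}(Z,W)$ that serves in every regime of the fibre $\Phi^{-1}(\phi)$. Fixing a bounded measurable $h$, I would set $v_{\phi}(Z,W) := \mE_{\sigma^{*}}\{h(X)\mid Z,W\}$; this is $\sigma(Z,W)$-measurable, hence genuinely a function of $(Z,W)$ by \lemref{measurability}, and it is the same function for all $\sigma$ because it is defined through $\sigma^{*}$ alone. The target is to show, for every $\sigma \in \Phi^{-1}(\phi)$, that $\mE_\sigma\{h(X)\mid Y,Z,W\} = v_{\phi}(Z,W)$ \as{\mP_\sigma}, which is exactly \ind{X}{(Y,\Theta)}{(Z,W,\Phi)}.

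The key preliminary step is an \emph{extension} of the hypothesis from functions of $X$ to functions of $(X,Z)$. The premise \ind{X}{(Y,\Theta)}{(Z,\Phi)}, together with \remref{nstoc1}, says that for every bounded measurable $k(X)$ there is a common $w^{k}_{\phi}(Z)$ with $\mE_\sigma\{k(X)\mid Y,Z\} = w^{k}_{\phi}(Z)$ \as{\mP_\sigma} for all $\sigma \in \Phi^{-1}(\phi)$. I would first record that this common-version property propagates to every bounded $\sigma(X,Z)$-measurable $m$: for products $m = k(X)a(Z)$ it is immediate on pulling out $a(Z)$, and the class of $m$ for which it holds is closed under bounded monotone limits (by conditional monotone convergence applied simultaneously under each $\mP_\sigma$), so a monotone-class argument in the style of \lemref{nstocaxiom3i} extends it to all such $m$, still with a common version that is a function of $Z$ alone.

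With this in hand I would verify the defining identity of $\mE_\sigma\{h(X)\mid Y,Z,W\}$ against $v_{\phi}(Z,W)$ on the generating $\pi$-system of sets $A_{Y}\cap A_{Z}\cap A_{W}$, with $A_{Y}\in\sigma(Y)$, $A_{Z}\in\sigma(Z)$, $A_{W}\in\sigma(W)$. Writing $\tilde h(X) := h(X)\mathbbm{1}_{A_{W}}$, which is a function of $X$ since $W \preceq X$, conditioning the left-hand side on $(Y,Z)$ and applying the premise collapses it to $\mE_\sigma\{w^{\tilde h}_{\phi}(Z)\,\mathbbm{1}_{A_{Z}\cap A_{Y}}\}$; conditioning the right-hand side on $(Y,Z)$ and applying the extension step to $m(X,Z) := v_{\phi}(Z,W)\mathbbm{1}_{A_{W}}$ collapses it to $\mE_\sigma\{V_{\phi}(Z)\,\mathbbm{1}_{A_{Z}\cap A_{Y}}\}$, where $V_{\phi}(Z)$ is the common version of $\mE_\sigma\{m(X,Z)\mid Y,Z\}$. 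It then remains only to identify $w^{\tilde h}_{\phi}$ with $V_{\phi}$.

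Here domination plays its decisive second role. Both $w^{\tilde h}_{\phi}$ and $V_{\phi}$ are common versions, so it suffices to check $w^{\tilde h}_{\phi}(Z) = V_{\phi}(Z)$ \as{\mP_{\sigma^{*}}}; the hypothesis $\mP_\sigma \ll \mP_{\sigma^{*}}$ then upgrades this to \as{\mP_\sigma} for every $\sigma$ in the fibre. Under $\mP_{\sigma^{*}}$ the identity is a pure tower-property computation: since $\mathbbm{1}_{A_{W}}$ is $\sigma(Z,W)$-measurable,
\[
  V_{\phi}(Z) = \mE_{\sigma^{*}}\!\left\{\mE_{\sigma^{*}}[h(X)\mid Z,W]\,\mathbbm{1}_{A_{W}} \mid Z\right\} = \mE_{\sigma^{*}}\!\left\{h(X)\mathbbm{1}_{A_{W}}\mid Z\right\} = w^{\tilde h}_{\phi}(Z) \quad \as{\mP_{\sigma^{*}}}.
\]
Equality on the $\pi$-system, together with $\sigma(Z,W)$-measurability of $v_{\phi}(Z,W)$, yields the defining identity on all of $\sigma(Y,Z,W)$ by Dynkin's lemma, completing the proof. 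The main obstacle is exactly the step that the Bayesian argument of \secref{bayes} cannot deliver in an uncountable regime space: producing one version valid \emph{simultaneously} across the whole fibre. Domination overcomes it by supplying a reference regime in which to define $v_{\phi}$ and whose null sets control those of every $\mP_\sigma$; the monotone-class extension to functions of $(X,Z)$ is the other technical ingredient that must be handled with care.
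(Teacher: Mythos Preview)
Your argument is correct and uses the same core ingredients as the paper---the extension of the common-version hypothesis from functions of $X$ to functions of $(X,Z)$ (this is \lemref{nstocaxiom3i}), the candidate version $v_\phi(Z,W):=\mE_{\sigma^*}\{h(X)\mid Z,W\}$ built from the dominating regime, and the transfer of a.s.\ identities from $\mP_{\sigma^*}$ to all $\mP_\sigma$ via absolute continuity---but the organization differs. The paper first invokes \proporef{defbreak} to split the target \ind{X}{(Y,\Theta)}{(Z,W,\Phi)} into \ind{X}{Y}{(Z,W,\Phi,\Theta)} and \ind{X}{\Theta}{(Z,W,\Phi)}. The first of these it disposes of by the symmetry of \proporef{nstocsym} (swapping $X$ and $Y$, absorbing $W$ into $X$, swapping back), with no appeal to domination; only the second statement needs the dominating-regime machinery, and there the verification is against test sets in $\sigma(Z,W)$ rather than in $\sigma(Y,Z,W)$. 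You instead treat both aspects at once, verifying the defining identity of $\mE_\sigma\{h(X)\mid Y,Z,W\}$ against $v_\phi(Z,W)$ directly on the $\pi$-system of rectangles $A_Y\cap A_Z\cap A_W$, and you use domination once, at the identification of $w^{\tilde h}_\phi$ with $V_\phi$. The paper's route isolates more sharply where domination is actually required; your route is more monolithic but avoids the detour through \proporef{defbreak} and the symmetry lemma.
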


\begin{proof}
  By \proporef{defbreak}, it suffices to prove the following two
  statements:
  \begin{equation}
    \label{eq:nstocaxioms4a} 
    \ind{X}{Y}{(Z,W,\Phi,\Theta)} 
  \end{equation}
  and
  \begin{equation}
    \label{eq:nstocaxioms4b} 
    \ind{X}{\Theta}{(Z,W,\Phi)}.  
  \end{equation}
  To prove~\eqref{nstocaxioms4a}, we will use \proporef{nstocsym} and
  prove equivalently that \mbox{$Y \cip X \mid $} $(Z,W,\Phi,\Theta)$.
  Note first that since \ind{X}{(Y,\Theta)}{(Z,\Phi)}, by
  \proporef{defbreak} it follows that \ind{X}{Y}{(Z,\Phi,\Theta)}, and
  by \proporef{nstocsym} it follows that ${Y} \cip {X} \mid$
  ${(Z,\Phi,\Theta)}$.  Also, since $W \preceq X$, by
  \lemref{measurability} it follows that
  $\sigma(W) \subseteq \sigma(X)$.  Let
  $(\phi,\theta) \in (\Phi(\cS),\Theta(\cS))$,
  $\sigma=(\Phi,\Theta)^{-1}(\phi,\theta)$ and $A_{Y} \in \sigma(Y)$.
  Then
  \begin{align*}
    \mE_\sigma\left(\mathbbm{1}_{A_{Y}}\mid X,Z,W\right) &=\mE_\sigma\left(\mathbbm{1}_{A_{Y}}\mid X,Z\right) \,\, \text{\as{\mP_\sigma} since $\sigma(W) \subseteq \sigma(X)$} \\
                                                         &=\mE_\sigma\left(\mathbbm{1}_{A_{Y}}\mid Z\right) \,\,
                                                           \text{\as{\mP_\sigma} since \ind{Y}{X}{(Z,\Phi,\Theta)}},
  \end{align*}
  which proves that \ind{Y}{X}{(Z,W,\Phi,\Theta)}.

  To prove~\eqref{nstocaxioms4b}, let $\phi \in \Phi(\cS)$ and
  $A_{X} \in \sigma(X)$.  We will show that there exists
  $w_{\phi}(Z,W)$ such that, for all $\sigma \in \Phi^{-1}(\phi)$,
  \begin{equation*}
    \mE_\sigma\left(\mathbbm{1}_{A_{X}}\mid Z,W\right) = w_{\phi}(Z,W) \,\, \text{\as{\mP_\sigma}},
  \end{equation*}
  \ie,
  \begin{equation*}
    \mE_\sigma\left(\mathbbm{1}_{A_{X}}\mathbbm{1}_{A_{Z,W}}\right)
    =\mE_\sigma\left\{w_{\phi}(Z,W)\mathbbm{1}_{A_{Z,W}}\right\} 
    \,\, \text{whenever $A_{Z,W} \in \sigma(Z,W)$}.
  \end{equation*}
  Let $A_{Z,W} \in \sigma(Z,W)$ and note that
  \begin{equation}
    \label{eq:nstocaxiom4aiii} 
    \mE_\sigma\left(\mathbbm{1}_{A_{X}}\mathbbm{1}_{A_{Z,W}}\right)
    =\mE_\sigma\left\{\mE_\sigma(\mathbbm{1}_{A_{X}}\mathbbm{1}_{A_{Z,W}}\mid Z)\right\}.  
  \end{equation} 
  Since \ind{X}{(Y,\Theta)}{(Z,\Phi)}, by \lemref{nstocaxiom3i} it
  follows that \ind{(X,Z)}{(Y,\Theta)}{(Z,\Phi)}, and by
  \proporef{defbreak} that \ind{(X,Z)}{\Theta}{(Z,\Phi)}.  Also, since
  $W \preceq X$ there exists $a_{\phi}(Z)$ such that
  \begin{equation}
    \label{eq:nstocaxiom4aiv}
    \mE_\sigma(\mathbbm{1}_{A_{X}} \mathbbm{1}_{A_{Z,W}}\mid Z)=a_{\phi}(Z) \,\, \text{\as{\mP_\sigma}}.
  \end{equation}
  In particular, for the dominating regime
  $\sigma_{\phi} \in \Phi^{-1}(\phi)$,
  \begin{equation*}
    \mE_{\sigma_{\phi}}(\mathbbm{1}_{A_{X}} \mathbbm{1}_{A_{Z,W}}\mid Z)=a_{\phi}(Z) \,\, \text{\as{\mP_{\sigma_{\phi}}}}
  \end{equation*}
  and thus
  \begin{equation*}
    \mE_{\sigma_{\phi}}\left\{\mE_{\sigma_{\phi}}(\mathbbm{1}_{A_{X}} \mathbbm{1}_{A_{Z,W}}\mid Z,W)\mid Z\right\}
    =a_{\phi}(Z) \,\, \text{\as{\mP_{\sigma_{\phi}}}}.
  \end{equation*}
  Since $\mP_\sigma \ll \mP_{\sigma_{\phi}}$ for all
  $\sigma \in \Phi^{-1}(\phi)$, it follows that, for all
  $\sigma \in \Phi^{-1}(\phi)$,
  \begin{equation}
    \label{eq:nstocaxiom4av}
    \mE_{\sigma_{\phi}}\left\{\mE_{\sigma_{\phi}}(\mathbbm{1}_{A_{X}} \mathbbm{1}_{A_{Z,W}} \mid Z,W)\mid Z\right\}
    =a_{\phi}(Z) \,\, \text{\as{\mP_\sigma}}.
  \end{equation}
  Thus, by~\eqref{nstocaxiom4aiv} and~\eqref{nstocaxiom4av}, we get
  that
  \begin{equation}
    \label{eq:nstocaxiom4avi}
    \mE_\sigma(\mathbbm{1}_{A_{X}} \mathbbm{1}_{A_{Z,W}}\mid Z)= 
    \mE_{\sigma_{\phi}}\left\{\mE_{\sigma_{\phi}}(\mathbbm{1}_{A_{X}} \mathbbm{1}_{A_{Z,W}}\mid Z,W)\mid Z\right\}
    \,\, \text{\as{\mP_\sigma}}.
  \end{equation}
  Similarly,
  \begin{equation}
    \label{eq:nstocaxiom4avii}
    \mE_{\sigma_{\phi}}\left\{\mE_{\sigma_{\phi}}(\mathbbm{1}_{A_{X}} \mathbbm{1}_{A_{Z,W}}\mid Z,W)\mid Z\right\}=
    \mE_\sigma\left\{\mE_{\sigma_{\phi}}(\mathbbm{1}_{A_{X}} \mathbbm{1}_{A_{Z,W}}\mid Z,W)\mid Z\right\}
    \,\, \text{\as{\mP_\sigma}}.
  \end{equation}
  Returning to~\eqref{nstocaxiom4aiii}, it follows that
  \begin{align*}
    \mE_\sigma\left(\mathbbm{1}_{A_{X}}\mathbbm{1}_{A_{Z,W}}\right)
    &=\mE_\sigma\left[\mE_{\sigma_{\phi}}\left\{\mathbbm{1}_{A_{Z,W}}\mE_{\sigma_{\phi}}(\mathbbm{1}_{A_{X}}\mid
      Z,W) \mid Z\right\}\right]
      \,\, \text{by~\eqref{nstocaxiom4avi}}\\
    &=\mE_\sigma\left[\mE_\sigma\left\{\mathbbm{1}_{A_{Z,W}}\mE_{\sigma_{\phi}}(\mathbbm{1}_{A_{X}}\mid Z,W) \mid Z\right\}\right]
      \,\, \text{by~\eqref{nstocaxiom4avii}}\\
    &=\mE_\sigma\left\{\mathbbm{1}_{A_{Z,W}}\mE_{\sigma_{\phi}}(\mathbbm{1}_{A_{X}}\mid
      Z,W)\right\}.
  \end{align*}
\end{proof}

\section{Pairwise conditional independence}
\label{sec:pairwise}

Yet another path is to relax the notion of \ecis.  Here we introduce a
weaker version that we term \emph{pairwise extended conditional
  independence}.

\begin{definition}
  \label{def:nstocdef2}
  Let $X$, $Y$ and $Z$ be stochastic variables and let $\Theta$ and
  $\Phi$ be decision variables.  We say that \emph{$X$ is pairwise
    (conditionally) independent of $(Y,\Theta)$ given $(Z,\Phi)$}, and
  write \pind{X}{(Y,\Theta)}{(Z,\Phi)}, if for all
  $\phi \in \Phi(\cS)$, all real, bounded and measurable functions
  $h$, and all pairs $\{\sigma_{1},\sigma_{2}\} \in \Phi^{-1}(\phi)$,
  there exists a function $w_{\phi}^{\sigma_{1},\sigma_{2}}(Z)$ such
  that
  \begin{equation*}
    \mE_{\sigma_{1}}\left\{h(X)\mid Y,Z\right\}=w_{\phi}^{\sigma_{1},\sigma_{2}}(Z) \,\, \text{\as{\mP_{\sigma_{1}}}}
  \end{equation*}
  and
  \begin{equation*}
    \mE_{\sigma_{2}}\left\{h(X)\mid Y,Z\right\}=w_{\phi}^{\sigma_{1},\sigma_{2}}(Z) \,\, \text{\as{\mP_{\sigma_{2}}}}.
  \end{equation*}
\end{definition}

It is readily seen that \ecis\ implies \pecis, but the converse is
false.  In \defref{nstocdef2}, for all $\phi \in \Phi(\cS)$, we only
require a common version for the corresponding conditional expectation
for every pair of regimes
$\{\sigma_{1},\sigma_{2}\}\in \Phi^{-1}(\phi)$, but we do not require
that these versions agree on one function that can serve as a version
for the corresponding conditional expectation simultaneously in all
regimes $\sigma \in \Phi^{-1}(\phi)$.

Under this weaker definition, the analogues of P$1'$ to P$5'$, and of
P$3''$ and P$5''$, can be seen to hold just as in \secref{asa}.  Also,
by confining attention to two regimes at a time and applying
\corref{nstoc2}, the analogue of P$4''$ will hold without further
conditions.

It can be shown that, when there exists a dominating regime, \pecis\
is equivalent to \ecis.  This property can be used to supply an
alternative proof of \proporef{nstocaxiom4}.

\section{Further extensions}
\label{sec:tcodrv}

So far we have studied extended conditional independence relations of
the form \ind{X}{(Y,\Theta)}{(Z,\Phi)}, where the left-most term is
fully stochastic.  We now wish to extend this to the most general
expression, of the form \ind{(X, K)}{(Y,\Theta)}{(Z,\Phi)}, where
$X, Y, Z$ are stochastic variables and $K, \Theta, \Phi$ are decision
variables, and to investigate the validity of the separoid axioms.

Consider first the expression \ind{K}{\Theta}{Z}.  Our desired
intuitive interpretation of this is that conditioning on the
stochastic variable $Z$ renders the decision variables $K$ and
$\Theta$ variation independent.  We need to turn this intuition into a
rigorous definition, taking account of the fact that $Z$ may have
different distributions in the different regimes $\sigma \in \cS$,
whereas $K$ and $\Theta$ are functions defined on $\cS$.


One way to interpret this intuition is to consider, for each value $z$
of $Z$, the set $ \cS_z$ of regimes that for which $z$ is a ``possible
outcome,'' and ask that the decision variables be variation
independent on this restricted set.  In order to make this rigorous,
we shall require that $Z: (\Omega, \cA) \rightarrow (F_Z, {\cal F}_Z)$
where $(F_Z, {\cal F}_Z)$ is a topological space with its Borel
$\sigma$-algebra, and introduce
\begin{equation*}
  \cS_z:=\left\{\sigma \in \cS: \mP_\sigma(Z\in U) > 0\mbox{ for every open set }
    U\subseteq {\cal F}_Z \mbox{ containing }z\right\}.
\end{equation*} 

In particular, when $Z$ is discrete, with the discrete topology,
\begin{equation*}
  \cS_z:=\left\{\sigma \in \cS: \mP_\sigma(Z=z) > 0\right\}.
\end{equation*} 

We now formalise the slightly more general expression
\ind{K}{\Theta}{(Z,\Phi)} in the following definition.

 
\begin{definition}
  \label{def:nstocdiscrete1}
  Let $Z$ be a stochastic variable and $K, \Theta, \Phi$ decision
  variables.  We say that \emph{$\Theta$ is (conditionally)
    independent of $K$ given $(Z,\Phi)$}, and write
  \ind{\Theta}{K}{(Z,\Phi)} if, for all $z \in Z(\Omega)$,
  \vind{\Theta}{K}{\Phi} $[\cS_z]$.
\end{definition}


We now wish to introduce further definitions, to allow stochastic and
decision variables to appear together in the left-most term of a
conditional independence statement.  Recall that in
\defref{nstocdef1}, we defined $\ind{X}{(Y,\Theta)}{(Z,\Phi)}$ only
when $\Theta$ and $\Phi$ are complementary on $\cS$.  Similarly, we
will define \ind{(X,K)}{(Y,\Theta)}{(Z,\Phi)} only when $K$, $\Theta$
and $\Phi$ are complementary, \ie, when the function $(K,\Theta,\Phi)$
on $\cS$ is a surjection.  Our interpretation of
$\ind{(X,K)}{(Y,\Theta)}{(Z,\Phi)}$ will now be a combination of
Definitions~\ref{def:nstocdef1} and~\ref{def:nstocdiscrete1}.  We
start with a special case.

\begin{definition}
  Let $Y, Z$ be stochastic variables, and $K, \Theta, \Phi$
  complementary decision variables.  We say that \emph{$(Y,\Theta)$ is
    (conditionally) independent of $K$ given $(Z,\Phi)$}, and write
  \ind{(Y,\Theta)}{K}{(Z,\Phi)}, if \ind{Y}{K}{(Z,\Phi,\Theta)} and
  \ind{\Theta}{K}{(Z,\Phi)}.  In this case we may also say that
  \emph{$K$ is (conditionally) independent of $(Y,\Theta)$ given
    $(Z,\Phi)$}, and write \ind{K}{(Y,\Theta)}{(Z,\Phi)}.
\end{definition}

Finally, the general definition:
\begin{definition}
  \label{def:full}
  Let $X,Y,Z$ be stochastic variables and $K, \Theta, \Phi$
  complementary decision variables.  We say that \emph{$(X,K)$ is
    (conditionally) independent of $(Y,\Theta)$ given $(Z,\Phi)$}, and
  write \ind{(X,K)}{(Y,\Theta)}{(Z,\Phi)}, if
  $$\ind{X}{(Y,\Theta)}{(Z,\Phi,K)},\,\mbox{ and }\,\ind{K}{(Y,\Theta)}{(Z,\Phi)}.$$
\end{definition}

\subsection{Separoid properties}
\label{sec:gensep}

We now wish to investigate the extent to which versions of the
separoid axioms apply to the above general definition.  In this
context the relevant set $V$ is the set of pairs of the form
$(Y,\Theta)$, where $Y$ is a stochastic variable defined on $\Omega$
and $\Theta$ is a decision variable defined on $\cS$.

For a full separoid treatment we also need to introduce a quasiorder
$\preceq$ on $V$.  A natural definition would be:
$(W,\Lambda) \preceq (Y,\Theta)$ if $W =f(Y)$ for some measurable
function $f$ (also denoted by $W \preceq Y$) and $\Lambda=h(\Theta)$
for some function $h$.
Then $(V, \preceq)$ becomes a join semilattice, with join
$(Y,\Theta)\vee(W,\Lambda)\approx ((Y,W),(\Theta,\Lambda))$.  However,
whenever we consider a relation \ind{(X,K)}{(Y,\Theta)}{(Z,\Phi)} we
require that $K,\Theta,\Phi$ be complementary, a property that would
typically be lost on replacing $\Theta$ by a non-trivial reduction
$\Lambda = h(\Theta)$.  Consequently, we will only be considering
non-trivial reductions of stochastic variables.  This constraint
modifies and reduces the separoid properties.

\begin{theorem}[Separoid-type properties]
  \label{axiomsgeneral}
  Let $X,Y,Z,W$ be stochastic variables and $K,\Theta,\Phi$ be
  decision variables.  Then the following properties hold:
  \begin{itemize}
  \item [P$1^g$.] \ind{(X,K)}{(Y,\Theta)}{(Z,\Phi)} $\Rightarrow$
    \ind{(Y,\Theta)}{(X,K)}{(Z,\Phi)}.
  \item [P$2^g$.] \ind{(X,K)}{(Y,\Theta)}{(Y,\Theta)}.
  \item [P$3^g$.] \ind{(X,K)}{(Y,\Theta)}{(Z,\Phi)}, $W \preceq Y$
    $\Rightarrow$ \ind{(X,K)}{(W,\Theta)}{(Z,\Phi)}.
  \item [P$4^g$.] Under the conditions of \corref{nstoc2},
    \proporef{nstocaxiom4disc} and
    \proporef{nstocaxiom4},\\[1ex]
    \ind{(X,K)}{(Y,\Theta)}{(Z,\Phi)}, $W \preceq Y$ $\Rightarrow$
    \ind{(X,K)}{Y}{(Z,W,\Phi,\Theta)}.
  \item [P$5^g$.]  $ \left.
      \begin{array}{c}
        \ind{(X,K)}{(Y,\Theta)}{(Z,\Phi)}\\
        \text{ and }\\
        \ind{(X,K)}{W}{(Y,Z,\Theta,\Phi)}
      \end{array} 
    \right\} \,\, \Rightarrow \,\,
    \ind{(X,K)}{(Y,W,\Theta)}{(Z,\Phi)}.  $
  \end{itemize}
\end{theorem}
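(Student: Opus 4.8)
The plan is to reduce each of P$1^g$--P$5^g$ to properties already established for relations whose left-most term is fully stochastic. The mechanism is \defref{full} together with the definition of \ind{(Y,\Theta)}{K}{(Z,\Phi)} that precedes it: any relation \ind{(X,K)}{(Y,\Theta)}{(Z,\Phi)} splits into an \emph{$X$-component} \ind{X}{(Y,\Theta)}{(Z,\Phi,K)} and a \emph{$K$-component} \ind{K}{(Y,\Theta)}{(Z,\Phi)}, and the $K$-component splits in turn into a stochastic relation \ind{Y}{K}{(Z,\Phi,\Theta)} and a pure variation-independence relation \ind{\Theta}{K}{(Z,\Phi)} (\defref{nstocdiscrete1}). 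So for each property I would unfold both the premiss and the conclusion into these atomic pieces, verify the pieces one at a time, and reassemble them using \proporef{defbreak} and the two splitting definitions. The $X$-pieces and the \ind{Y}{K}{\cdot}-type pieces are handled by \thmref{nstocaxioms} (P$1'$--P$5'$), by P$3''$, and by \proporef{P5''} (P$5''$); the \ind{\Theta}{K}{\cdot}-type pieces are handled, on each restricted regime set $\cS_z$, by the variation-independence separoid of \thmref{axiomsv}. Throughout I assume the complementarity of the decision variables required for the statements to be defined.

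For P$1^g$ I would unfold the conclusion \ind{(Y,\Theta)}{(X,K)}{(Z,\Phi)} into the four pieces \ind{Y}{X}{(Z,\Phi,\Theta,K)}, \ind{Y}{K}{(Z,\Phi,\Theta)}, \ind{X}{\Theta}{(Z,\Phi,K)} and \ind{K}{\Theta}{(Z,\Phi)}. The first comes from applying \proporef{defbreak} to the $X$-component and then symmetry P$1'$ (\corref{nstocsym}); the second is exactly the \ind{Y}{K}{\cdot} half of the $K$-component; the third is the complementary half delivered by \proporef{defbreak}; and the fourth follows from \ind{\Theta}{K}{(Z,\Phi)} by applying variation-independence symmetry P$1^v$ on each $\cS_z$. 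For P$2^g$, the $X$-component \ind{X}{(Y,\Theta)}{(Y,\Theta,K)} is an instance of P$2'$, since the conditioning decision variable $(\Theta,K)$ is complementary and hence fixes the regime, exactly as the regime indicator does in P$2'$; the two halves \ind{Y}{K}{(Y,\Theta)} and \ind{\Theta}{K}{(Y,\Theta)} of the $K$-component are the trivial ``conditioning on oneself'' instances governed by P$2^s$ and P$2^v$. For P$3^g$ the $X$-piece is the middle-term reduction P$3'$, while the $K$-piece needs \ind{Y}{K}{(Z,\Phi,\Theta)} $\Rightarrow$ \ind{W}{K}{(Z,\Phi,\Theta)} for $W \preceq Y$, which is the left-term reduction P$3''$; the \ind{\Theta}{K}{\cdot} part is carried over unchanged.

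P$5^g$ is the most substantial of the unconditional cases. After unfolding, the $X$-target \ind{X}{(Y,W,\Theta)}{(Z,\Phi,K)} is precisely contraction P$5'$ applied to the $X$-components of the two premisses, with the decision variable $K$ carried along in the conditioning term. The $K$-target splits into the unchanged \ind{\Theta}{K}{(Z,\Phi)} and into \ind{(Y,W)}{K}{(Z,\Phi,\Theta)}; for the latter I have \ind{Y}{K}{(Z,\Phi,\Theta)} from the $K$-component of the first premiss and \ind{W}{K}{(Y,Z,\Theta,\Phi)} from the $K$-component of the second, and combining these into \ind{(Y,W)}{K}{(Z,\Phi,\Theta)} is exactly \proporef{P5''}.

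The main obstacle is P$4^g$, which is why it is asserted only under the hypotheses of \corref{nstoc2}, \proporef{nstocaxiom4disc} and \proporef{nstocaxiom4}. Unfolding the conclusion \ind{(X,K)}{Y}{(Z,W,\Phi,\Theta)}, the $X$-part \ind{X}{Y}{(Z,W,\Phi,\Theta,K)} is harmless: \proporef{defbreak} rewrites the premiss as \ind{X}{Y}{(Z,\Phi,\Theta,K)} and ordinary weak union P$4'$ then adjoins $W \preceq Y$ to the conditioning set. The $K$-part, however, requires passing from \ind{Y}{K}{(Z,\Phi,\Theta)} to \ind{Y}{K}{(Z,W,\Phi,\Theta)} with $W \preceq Y$ sitting in the \emph{left-most} term --- this is the mirror-image weak union P$4''$, the one separoid property that does not hold in full generality. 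The proof of P$4^g$ must therefore route through P$4''$ via one of its sufficient conditions (a discrete regime space, discrete stochastic variables, or a dominating regime), and no unconditional argument for it is available.
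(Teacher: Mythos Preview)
Your proposal is correct and matches the paper's proof almost line for line: unfold each relation via \defref{full} and \proporef{defbreak}, then handle the $X$-pieces by P$1'$--P$5'$, the $Y$-vs-$K$ pieces by P$3''$ and \proporef{P5''} (and, for P$4^g$, by P$4''$ under its extra hypotheses), and the $\Theta$-vs-$K$ pieces by the variation axioms on each $\cS_z$. The only small imprecision is in P$2^g$: the piece \ind{Y}{K}{(Y,\Theta)} is not literally an instance of P$2^s$ since $K$ is non-stochastic; the paper instead verifies it by the one-line observation that $\mE_\sigma(\mathbbm{1}_{A_Y}\mid Y)=\mathbbm{1}_{A_Y}$ a.s., which furnishes the common $w_\theta(Y)$ for all $\sigma\in\Theta^{-1}(\theta)$.
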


\begin{proof}\quad\\
  P$1^g$.  By \defref{full} and \proporef{defbreak}, we need to show
  that
  \begin{align}
    &\ind{Y}{X}{(Z,\Phi,\Theta,K)} \label{eq:P1i}
    \\
    &\ind{Y}{K}{(Z,\Phi,\Theta)} \label{eq:P1ii}
    \\
    &\ind{X}{\Theta}{(Z,\Phi,K)} \label{eq:P1iii}
    \\
    &\ind{K}{\Theta}{(Z,\Phi)}.  \label{eq:P1iv}
  \end{align}
  Since \ind{(X,K)}{(Y,\Theta)}{(Z,\Phi)}, we have that
  \begin{align}
    &\ind{X}{Y}{(Z,\Phi,K,\Theta)} \label{eq:P1i'}
    \\
    &\ind{X}{\Theta}{(Z,\Phi,K)} \label{eq:P1ii'}
    \\
    &\ind{Y}{K}{(Z,\Phi,\Theta)} \label{eq:P1iii'}
    \\
    &\ind{\Theta}{K}{(Z,\Phi)}.  \label{eq:P1iv'}
  \end{align}
  It follows that~\eqref{P1ii} and~\eqref{P1iii} hold automatically.
  Also applying P$1'$ to~\eqref{P1i'} we deduce~\eqref{P1i}.
  Rephrasing~\eqref{P1iv'} in terms of variation independence, we have
  that, for all $z \in Z(\Omega)$, \vind{\Theta}{K}{\Phi} $[\cS_z]$.
  Thus applying P$1^{v}$ to~\eqref{P1iv'} we deduce that, for all
  $z \in Z(\Omega)$, \vind{K}{\Theta}{\Phi} $[\cS_z]$,
  \ie~\eqref{P1iv}.

  \noindent P$2^g$.  By \defref{full}, we need to show that
  \begin{align}
    & \ind{X}{(Y,\Theta)}{(Y,\Theta,K)}
      \label{eq:P2i}
    \\
    & \ind{Y}{K}{(Y,\Theta)}
      \label{eq:P2ii}
    \\
    & \ind{\Theta}{K}{(Y,\Theta)}.
      \label{eq:P2iii}
  \end{align}
  By P$2'$ we have that \ind{X}{(Y,\Theta,K)}{(Y,\Theta,K)} which is
  identical to~\eqref{P2i}.  To show~\eqref{P2ii}, let
  $\theta \in \Theta(\cS)$ and $A_{Y} \in \sigma(Y)$.  We seek
  $w_{\theta}(Y)$ such that, for all $\sigma \in \Theta^{-1}(\theta)$,
  \begin{equation*}
    \mE_\sigma\left(\mathbbm{1}_{A_{Y}} \mid Y\right)= w_{\theta}(Y)
    \,\, \text{\as{\mP_\sigma}}.
  \end{equation*}
  But note that
  \begin{equation*}
    \mE_\sigma\left(\mathbbm{1}_{A_{Y}} \mid Y\right)= 
    \mathbbm{1}_{A_{Y}} \,\, \text{\as{\mP_\sigma}}.
  \end{equation*}
  To show~\eqref{P2iii}, let $y \in Y(\Omega)$.  By P$2^{v}$, we have
  that
  \begin{equation}
    \vind{K}{\Theta}{\Theta} \,\, [\cS_y].
  \end{equation} 
  Applying P$1^{v}$ to~\eqref{P2ii} we deduce that
  \vind{\Theta}{K}{\Theta} $[\cS_y]$, \ie~\eqref{P2iii}.
	
  \noindent P$3^g$.  By \defref{full}, we need to show that
  \begin{align}
    &\ind{X}{(W,\Theta)}{(Z,\Phi,K)} \label{eq:P3i}
    \\
    &\ind{W}{K}{(Z,\Phi,\Theta)} \label{eq:P3ii}
    \\
    &\ind{\Theta}{K}{(Z,\Phi)}.  \label{eq:P3iii}
  \end{align}
  Since \ind{(X,K)}{(Y,\Theta)}{(Z,\Phi)}, we have that
  \begin{align}
    &\ind{X}{(Y,\Theta)}{(Z,\Phi,K)} \label{eq:P3i'}
    \\
    &\ind{Y}{K}{(Z,\Phi,\Theta)} \label{eq:P3ii'}
    \\
    &\ind{\Theta}{K}{(Z,\Phi)}.  \label{eq:P3iii'}
  \end{align}
  Since $W \preceq Y$, applying P$3'$ to~\eqref{P3i'}, we
  deduce~\eqref{P3i}.  Also, applying P$3''$ to~\eqref{P3ii'} we
  deduce~\eqref{P3ii}.
 
  \noindent P$4^g$.  By~\defref{full}, we need to show that
  \begin{align}
    &\ind{X}{Y}{(Z,W,\Phi,\Theta, K)} \label{eq:P4i}
    \\
    &\ind{Y}{K}{(Z,W,\Phi,\Theta)}.  \label{eq:P4ii}
  \end{align}
  Since \ind{(X,K)}{(Y,\Theta)}{(Z,\Phi)}, we have that
  \begin{align}
    &\ind{X}{(Y,\Theta)}{(Z,\Phi,K)} \label{eq:P4i'} \\
    &\ind{Y}{K}{(Z,\Phi,\Theta)} \label{eq:P4ii'} \\
    &\ind{\Theta}{K}{(Z,\Phi)} \label{eq:P4iii'}.
  \end{align}
  Since $W \preceq Y$, applying P$4'$ to~\eqref{P4i'} we deduce that
  \ind{X}{(Y,\Theta)}{(Z,W,\Phi,K)} which implies~\eqref{P4i}.  Also,
  under the additional conditions assumed,~\eqref{P4ii'}
  implies~\eqref{P4ii}.

  \noindent P$5^g$.  By \defref{full}, we need to show that
  \begin{align}
    &\ind{X}{(Y,W,\Theta)}{(Z,\Phi,K)} \label{eq:P5i} \\
    &\ind{(Y,W)}{K}{(Z,\Phi,\Theta)} \label{eq:P5ii} \\
    &\ind{\Theta}{K}{(Z,\Phi)} \label{eq:P5iii}.
  \end{align}
  Since \ind{(X,K)}{(Y,\Theta)}{(Z,\Phi)} and
  \ind{(X,K)}{W}{(Y,Z,\Theta,\Phi)}, we have that
  \begin{align}
    &\ind{X}{(Y,\Theta)}{(Z,\Phi,K)} \label{eq:P5i'}
    \\
    &\ind{Y}{K}{(Z,\Phi,\Theta)} \label{eq:P5ii'}
    \\
    &\ind{\Theta}{K}{(Z,\Phi)} \label{eq:P5iii'}
    \\
    &\ind{X}{W}{(Y,Z,\Theta,\Phi,K)} \label{eq:P5iv'}
    \\
    &\ind{W}{K}{(Y,Z,\Theta,\Phi)}.  \label{eq:P5v'}
  \end{align}
  It follows that~\eqref{P5iii} holds automatically.  Also applying
  P$5'$ to~\eqref{P5i'} and~\eqref{P5iv'} we deduce~\eqref{P5i} and
  applying P$5''$ to~\eqref{P5ii'} and~\eqref{P5v'} we
  deduce~\eqref{P5ii}.
\end{proof}

\section{Applications of \eci}
\label{sec:4}

The driving force behind this work was the need to establish a
rigorous basis for a wide range of statistical concepts---in
particular the \dts\ for statistical causality.  In the \dts\ we have
stochastic variables whose outcomes are determined by Nature, and
decision variables that determine the probabilistic regime generating
the stochastic variables.  Using the language of \eci\ we are able to
express and manipulate conditions that allow us to transfer
probabilistic information between regimes, and thus use information
gleaned from one regime to understand a different, unobserved regime
of interest.

Here we illustrate, with two examples, how the language and the
calculus of \ecis\ can be applied to identify causal quantities.  For
numerous further applications see~\citet{apd:annrev}.

\begin{ex}[Average causal effect]
  Suppose we are concerned with the effect of a binary treatment $T$
  (with value 1 denoting active treatment, and $0$ denoting placebo)
  on a disease variable $Y$.  There are 3 regimes of interest,
  indicated by a regime indicator $\Sigma$: $\Sigma= 1$ [resp.,
  $\Sigma=0$] denotes the situation where the patient is assigned
  treatment $T=1$ [resp., $T=0$] by external intervention; whereas
  $\Sigma = \emptyset$ indicates an observational regime, in which $T$
  is chosen, in some random way beyond the analyst's control,``by
  Nature.''  For example, the data may have been gathered by doctors
  or in hospitals, and the criteria on which the treatment decisions
  were based not recorded.

  A typical focus of interest is the \emph{Average Causal Effect}
  (\ace)~\citep{guodaw:10, gendaw:11},
  \begin{equation*}
    \ace := \mE_{1}(Y)-\mE_{0}(Y)
  \end{equation*}
  where $\mE_{\sigma}(\cdot) = \mE(\cdot \mid \sigma)$ denotes
  expectation under regime $\Sigma=\sigma$.  This is a direct
  comparison of the average effects of giving treatment versus placebo
  for a given patient.  However in practice, for various reasons
  (ethical, financial, pragmatic, \etc), we may not be able to observe
  $Y$ under these interventional regimes, and then can not compare
  them directly.  Instead, we might have access to data generated
  under the observational regime, where other variables might affect
  both the treatment choice and the variable of interest.  In such a
  case, the distribution of the outcome of interest, for a patient
  receiving treatment $T=t$, cannot necessarily be assumed to be the
  same as in the corresponding interventional regime $\Sigma=t$.

  However, if the observational data have been generated and collected
  from a Randomised Control Trial (\ie the sample is randomly chosen
  and the treatment is randomly allocated), we could reasonably impose
  the following \eci\ condition:
  \begin{equation}
    \label{eq:noconfounding}
    \ind{Y}{\Sigma}{T}.  
  \end{equation}
  This condition expresses the property that, given information on the
  treatment $T$, the distribution of $Y$ is independent of the
  regime---in particular, the same under interventional and
  observational conditions.  When it holds, we are, intuitively,
  justified in identifying $\mE_t(Y)$ with $\mE_\emptyset(Y \mid T=t)$
  ($t=0,1$), so allowing estimation of \ace\ from the available data.

  To make this intuition precise, note that, according to
  \defref{nstocdef1}, property~\eqref{noconfounding} implies that
  there exists $w(T)$ such that, for all
  $\sigma \in \{\emptyset,0,1\}$,
  \begin{equation*}
    \mE_\sigma(Y \mid T)= w(T) \,\, \as{\mP_\sigma}.
  \end{equation*}
  Now in the interventional regimes, for $t=0,1$, $\mP_{t}(T=t)=1$.
  Thus for $t=0,1$,
  \begin{equation*}
    w(t) =  \mE_{t}(Y \mid T=t) = \mE_{t}(Y) \,\, \as{\mP_t}.
  \end{equation*}
  Since both $w(t)$ and $\mE_{t}(Y)$ are non-random real numbers, we
  thus must have
  \begin{equation}
    \label{eq:ex30}
    w(t) =   \mE_{t}(Y).
  \end{equation}
  Also, in the observational regime,
  \begin{eqnarray*}
    \mE_\emptyset(Y \mid T) &=& w(T) \,\, \as{\mP_\emptyset}.
  \end{eqnarray*}
  Thus (so long as in the observational regime both treatments are
  allocated with positive probability) we obtain, for $t=0,1$,
  \begin{eqnarray*}
    \mE_{\emptyset}(Y \mid T=t) &=& w(t)\\
                                &=&\mE_{t}(Y) \,\, \mbox{by~\eqref{ex30}.}
  \end{eqnarray*}
  Then
  \begin{align*}
    \ace &=\mE_{1}(Y)-\mE_{0}(Y) \\
         &=\mE_{\emptyset}(Y \mid T=1)-\mE_{\emptyset}(Y \mid T=0)
  \end{align*}
  and so \ace\ can be estimated from the observational data.
\end{ex}

\begin{ex}[Dynamic treatment strategies]
  Suppose we wish to control some variable of interest through a
  sequence of consecutive actions~\citep{rob:86, rob:87, rob:89}.  An
  example in a medical context is maintaining a critical variable,
  such as blood pressure, within an appropriate risk-free range.  To
  achieve such control, the doctor will administer treatments over a
  number of stages, taking into account, at each stage, a record of
  the patient's history, which provides information on the level of
  the critical variable, and possibly other related measurements.

  We consider two sets of stochastic variables: $\cal{L}$, a set of
  \emph{observable} variables, and $\cal{A}$, a set of \emph{action}
  variables.  The variables in ${\cal L}$ represent initial or
  intermediate symptoms, reactions, personal information, \etc,
  observable between consecutive treatments, and over which we have no
  direct control; they are perceived as generated and revealed by
  Nature.  The action variables ${\cal A}$ represent the treatments,
  which we could either control by external intervention, or else
  leave to Nature (or the doctor) to determine.

  An alternating ordered sequence
  ${\cal I}:= (L_1, A_1,\ldots, L_n, A_n, L_{n+1}\equiv Y)$ with
  $L_i\subseteq{\cal L}$ and $A_i\in{\cal A}$ defines an
  \emph{information base}, the interpretation being that the specified
  variables are observed in this time order.Thus at each stage $i$
  ($=1,\ldots, n$) we will have a realisation of the random variable
  (or set of random variables) $L_i \subseteq \cal{L}$, followed by a
  value for the variable $A_i \in \cal{A}$.  After the realization of
  the final $A_n \in \cal{A}$, we will observe the outcome variable
  $L_{n+1} \in \cal{L}$, which we also denote by $Y$.

  In such a problems we might be interested to evaluate and compare
  different strategies, \ie, well-specified algorithms that take as
  input the recorded history of a patient at each stage and give as
  output the choice (possibly randomised) of the next treatment to be
  allocated.  These strategies constitute interventional regimes, for
  which we would like to make inference.  However, it may not be
  possible to implement all (or any) of these strategies to gather
  data, so we may need to rely on observational data and hope that it
  will be possible to use these data to estimate the interventional
  effects of interest.

  We thus take the regime space to be $\cS=\{\emptyset\} \cup \cS^*$,
  where $\emptyset$ labels the observational regime under which data
  have been gathered, and $\cS^*$ is the collection of contemplated
  interventional strategies.  We denote the regime indicator, taking
  values in $\cS$, by $\Sigma$.  In order to identify the effect of
  some strategy $s \in \cS^*$ on the outcome variable $Y$, we aim to
  estimate, from the observational data gathered under regime
  $\Sigma=\emptyset$, the expectation $\mE_{s}\{k(Y)\}$, for some
  appropriate function $k(\cdot)$ of $Y$, that would result from
  application of strategy $s$.

  One way to compute $\mE_{s}\{k(Y)\}$ is by identifying the overall
  joint density of $(L_1,A_1, \ldots, L_n, A_n, Y)$ in the
  interventional regime of interest $s$.  Factorising this joint
  density, we have:

  \begin{equation*}
    p_{s}(y, \overline{l}, \overline a) = \left\{\prod_{i=1}^{n+1}
      p_{s}(l_i \mid \overline{l}_{i-1},  \overline{a}_{i-1})\right\} \times
    \left\{\prod_{i=1}^n p_{s}(a_i \mid \overline{l}_{i},
      \overline{a}_{i-1})\right\}
  \end{equation*}
  with $l_{n+1} \equiv y$.  Here $\overline{l}_i$ denotes
  $(l_1,\ldots,l_i)$, \etc

  In order to compute $\mE_{s}\{k(Y)\}$, we thus need the following
  terms:

  \begin{enumerate}[label=(\roman{enumi})]
  \item
    \label{it:ai}
    $p_{s}(a_i \mid \overline l_{i}, \overline a_{i-1})$ for
    $i = 1, \ldots,n$.
  \item
    \label{it:li}
    $p_{s}(l_i \mid \overline l_{i-1}, \overline a_{i-1})$ for
    $i = 1, \ldots, n+1$.
  \end{enumerate}

  Since $s$ is an interventional regime, corresponding to a
  well-defined treatment strategy, the terms in \itref{ai} are fully
  specified by the treatment protocol.  So we only need to get a
  handle on the terms in \itref{li}.

  One assumption that would allow this is \emph{simple stability},
  expressed as
  \begin{equation*}
    \ind {L_i} {\Sigma} {(\overline L_{i-1}, \overline A_{i-1})} 
    \quad\quad (i =1, \ldots, n+1).
  \end{equation*}
  This says, intuitively, that the distribution of $L_i$, given all
  past observations, is the same in both the interventional and the
  observational regimes.  When it holds (and assuming that the
  conditioning event occurs with positive probability in the
  observational regime) we can replace
  $p_{s}(l_i \mid \overline l_{i-1}, \overline a_{i-1})$ in \itref{li}
  with its observationally estimable counterpart,
  $p_\emptyset(l_i \mid \overline l_{i-1}, \overline a_{i-1})$.  We
  then have all the ingredients needed to estimate the interventional
  effect $\mE_{s}\{k(Y)\}$.\footnote{The actual computation can be
    streamlined using {\em $G$-recursion\/}~\citep{diddaw:10}.}

  However, in many cases the presence of unmeasured variables, both
  influencing the actions taken under the observational regime and
  affecting their outcomes, would not support a direct assumption of
  simple stability.  Denote these additional variables by $U_i$
  ($i=1,\ldots,n$).  A condition that might be more justifiable in
  this context is \emph{extended stability}, expressed as
  \begin{equation*}
    \ind {(L_i, U_i)} {\Sigma} {(\overline L_{i-1}, \overline U_{i-1} \overline A_{i-1})}
    \quad\quad (i =1, \ldots, n+1).
  \end{equation*}
  This is like simple stability, but taking the unmeasured variables
  also into account.

  Now extended stability does not, in general, imply simple stability.
  But using the machinery of \ecis, we can explore when, in
  combination with further conditions that might also be
  justifiable---for example, sequential randomisation or sequential
  irrelevance~\citep{diddaw:10, dawcon:14}---simple stability can
  still be deduced, and hence $\mE_{s}\{k(Y)\}$ estimated.
\end{ex}

\section{Discussion}
\label{sec:disc}

We have presented a rigorous account of the hitherto informal concept
of \ecis, and indicated its fruitfulness in numerous statistical
contexts, such as ancillarity, sufficiency, causal inference,\etc\@

\emph{Graphical models}, in the form of Directed Acyclic Graphs
(DAGs), are often used to represent collections of conditional
independence properties amongst stochastic variables~\citep{laudaw:90,
  cow:07}, and we can then use graphical techniques (in particular,
the {\em $d$-separation\/}, or the equivalent {\em moralization\/},
criterion) to derive, in a visual and transparent way, implied
conditional independence properties that follow from the assumptions.
When such graphical models are extended to \emph{Influence Diagrams},
incorporating both stochastic and non-stochastic variables, the
identical methods support causal inference~\citep{daw:02}.  Numerous
application may be found in~\citet{apd:annrev}.  The theory developed
in this paper formally justifies this extended methodology.

\appendix

\section{Appendix}
\label{app:A}

\begin{proof0}{\lemref{measurability}}\quad\\
  \itref{aa1} to \itref{aa2}: By definition
  $\sigma(Z)=\{Z^{-1}(A): A \in \cF_Z\}$.  Observe that for all
  $z \in F_{Z}$ and all $B \in \sigma(Z)$, either
  $\{\omega \in \Omega: Z(\omega)=z\}\cap B= \emptyset$ or
  $\{\omega \in \Omega: Z(\omega)=z\} \subseteq B$.  Now fix
  $z \in F_{Z}$.  Since $Z$ is surjective,
  $\{\omega \in \Omega: Z(\omega)=z\}\neq \emptyset$.  Observing that
  $\displaystyle\cup_{y \in F_{Y}} \{\omega \in \Omega:
  Y(\omega)=y\}=\Omega$ we can find $y \in F_{Y}$ such that
  $\{\omega \in \Omega: Z(\omega)=z\}\cap \{\omega \in \Omega:
  Y(\omega)=y\}\neq \emptyset$.  But
  $\{\omega \in \Omega: Y(\omega)=y\}$
  $\in \sigma(Y)\subseteq \sigma(Z)$.  Therefore
  $\{\omega \in \Omega: Z(\omega)=z\} \subseteq \{\omega \in \Omega:
  Y(\omega)=y\}$ and this $y \in F_{Y}$ is unique.  For all such
  $z \in F_{Z}$ define $f(z)=y$.  We have therefore constructed
  $f: F_{Z} \rightarrow F_{Y}$ where $f(Z)=Y$.  To show that $f$ is
  $\cF_{Z}$-measurable, consider $E_{Y} \in \cF_{Y}$.  Then
  \begin{align*}
    f^{-1}(E_{Y})&=\{z \in F_{Z}: f(z)=y \in E_{Y}\} \\
                 &=\{z=Z(\omega) \in F_{Z}: f(Z(\omega))=Y(\omega)=y \in E_{Y}\} \\
                 &=Z(\{\omega \in \Omega: Y(\omega)=y \in E_{Y}\}).
  \end{align*}
  Since $E_{Y} \in \cF_{Y}$,
  $\{\omega \in \Omega: Y(\omega)=y \in E_{Y}\} \in \sigma(Y)
  \subseteq \sigma(Z)$.  Thus there exists $E_{Z} \in \cF_{Z}$ such
  that $Z(\{\omega \in \Omega: Y(\omega) \in E_{Y}\})=E_{Z}$ and we
  have proved that $f$ is $\cF_{Z}$-measurable.

  \noindent \itref{aa2} to \itref{aa1}: Let
  $A \in \sigma(Y)=\{Y^{-1}(B): B \in \cF_{Y}\}$.  Then there exists
  $B \in \cF_{Y}$ such that $Y^{-1}(B)=A$, which implies that
  $(f\circ Z)^{-1}(B)=A$ and thus $Z^{-1}(f^{-1}(B))=A$.  Since
  $B \in \cF_{Y}$ and $f$ is $\cF_{Z}$-measurable,
  $f^{-1}(B) \in \cF_{Z}$.  Also since $f^{-1}(B) \in \cF_{Z}$ and $Z$
  is $\sigma(Z)$-measurable, $Z^{-1}(f^{-1}(B)) \in \sigma(Z)$, which
  concludes the proof.
\end{proof0}


\begin{proof0}{\proporef{stocci}}\quad\\
  \noindent\itref{ascidef2} $\Rightarrow$ \itref{ascidef3}: We will
  prove \itref{scidef3} using the monotone class
  theorem~\cite[p.~235]{dur:13}.  Consider
  \begin{equation*}
    V=\left\{\text{$f$ real, bounded and measurable}: 
      \mE\left\{f(X) \mid Y,Z\right\}=\mE\left\{f(X) \mid Z\right\} \,\, \rm{a.s.}\, \right\}.
  \end{equation*}
  By linearity of expectation $V$ is a vector space of real and
  bounded functions.  Now by \itref{ascidef2}
  $\mathbbm{1}_{A_{X}} \in V$ for all $A_{X} \in \sigma(X)$.  Let
  $f_{n}(X) \in V$ for all $n \in \mN$ such that
  $0 \leq f_{n}(X) \uparrow f(X)$, where $f(X)$ is bounded.  Using
  conditional monotone convergence~\cite[p.~193]{dur:13},
  \begin{align*}
    \mE\left\{f(X) \mid Y,Z\right\}&= \lim_{n\rightarrow \infty} \mE\left\{f_{n}(X) \mid Y,Z\right\} \\
                                   &= \lim_{n\rightarrow \infty} \mE\left\{f_{n}(X) \mid Z\right\} \text{ since $f_{n}(X) \in V$}\\
                                   &= \mE\left\{f(X) \mid Z\right\}.
  \end{align*}
  Thus $f(X) \in V$ and we have shown \itref{scidef3}.

  \noindent\itref{ascidef3} $\Rightarrow$ \itref{ascidef4}: Let $f(X), g(Y)$ be
  real, bounded and measurable functions.  Then
  \begin{align*}
    \mE\left\{f(X)g(Y) \mid Z\right\}
    &= \mE\left[\mE\left\{f(X)g(Y) \mid Z,Y\right\} \mid Z\right] \,\, \rm{a.s.} \\
    &= \mE\left[g(Y) \mE\left\{f(X) \mid Z,Y\right\} \mid Z\right] \,\, \rm{a.s.}\\
    &= \mE\left[g(Y) \mE\left\{f(X) \mid Z\right\} \mid Z\right] \,\, \text{\rm{a.s.} by \itref{scidef3}}\\
    &= \mE\left\{f(X) \mid Z\right\} \mE\left\{g(Y) \mid Z\right\}
      \,\, \text{\rm{a.s.}}
  \end{align*}

  \noindent\itref{ascidef4} $\Rightarrow$ \itref{ascidef1}: Let $A_{X} \in
  \sigma(X)$ and $A_{Y} \in \sigma(Y)$.  Then $\mathbbm{1}_{A_{X}}$ is
  a real, bounded and $\sigma(X)$-measurable function and
  $\mathbbm{1}_{A_{Y}}$ is a real, bounded and $\sigma(Y)$-measurable
  function.  Thus \itref{ascidef1} is a special case of
  \itref{ascidef4}.
  
  \noindent\itref{ascidef1} $\Rightarrow$ \itref{ascidef2}: Let $A_{X} \in
  \sigma(X)$ and $w(Z)$ be a version of
  $\mE\left(\mathbbm{1}_{A_{X}} \mid Z\right)$.  Then $w(Z)$ is
  $\sigma(Z)$-measurable and so, since
  $\sigma(Z) \subseteq \sigma(Y,Z)$, also $\sigma(Y,Z)$-measurable.
  To show that
  $\mE\left(\mathbbm{1}_{A_{X}}\mathbbm{1}_{A}\right)=\mE\left\{w(Z)\mathbbm{1}_{A}\,\right\}$
  whenever $A \in \sigma(Y,Z)$, let
  \begin{equation*}
    \cD_{A_{X}}=\left\{A \in \sigma(Y,Z):\mE\left(\mathbbm{1}_{A_{X}}\mathbbm{1}_{A}\right)
      =\mE\left\{w(Z)\mathbbm{1}_{A}\right\}\,\right\}
  \end{equation*} 
  and
  \begin{equation*}
    \Pi=\{A \in \sigma(Y,Z): A=A_{X}\cap A_{Y} \text{ for some } A_{Y} \in \sigma(Y),A_{Z} \in \sigma(Z)\}.
  \end{equation*}
  Then $\sigma(\Pi)=\sigma(Y,Z)$~\cite[p.~73]{res:14}.  We will show
  that $\cD_{A_{X}}$ is a $d$-system that contains $\Pi$ and
  apply Dynkin's lemma~\cite[p.~42]{bil:95} to conclude that $\cD_{A_{X}}$ contains $\sigma(\Pi)=\sigma(Y,Z)$.  \\ \\
  To show that $\cD_{A_{X}}$ contains $\Pi$, let
  $A_{Y,Z}=A_{Y}\cap A_{Z}$ with $A_{Y} \in \sigma(Y)$ and
  $A_{Z} \in \sigma(Z)$.  Then
  \begin{align*}
    \mE\left(\mathbbm{1}_{A_{X}}\mathbbm{1}_{A}\right)
    &=\mE\{\mE\left(\mathbbm{1}_{A_{X}}\mathbbm{1}_{A_{Y}\cap A_{Z}} \mid Z\right)\} \\
    &=\mE\left\{\mathbbm{1}_{A_{Z}}\mE\left(\mathbbm{1}_{A_{X}}\mathbbm{1}_{A_{Y}} \mid Z\right)\right\} \\
    &=\mE\left\{\mathbbm{1}_{A_{Z}}\mE(\mathbbm{1}_{A_{X}} \mid
      Z)\mE(\mathbbm{1}_{A_{Y}} \mid Z)\right\}
      \,\, \text{by \itref{scidef1}} \\
    &=\mE\left[\mE\left\{\mE(\mathbbm{1}_{A_{X}} \mid Z)\mathbbm{1}_{A_{Y}\cap A_{Z}} \mid Z\right\}\right]  \\
    &=\mE\left\{\mE(\mathbbm{1}_{A_{X}}\mid Z)\mathbbm{1}_{A}\right\}.
  \end{align*}
  To show that $\cD_{A_{X}}$ is a $d$-system, first note that
  $\Omega \in \cD_{A_{X}}$.  Also, for $A_{1},A_{2} \in \cD_{A_{X}}$
  such that $A_{1} \subseteq A_{2}$,
  \begin{align*}
    \mE\left(\mathbbm{1}_{A_{X}}\mathbbm{1}_{A_{2}\setminus
    A_{1}}\right) &=
                    \mE\left(\mathbbm{1}_{A_{X}}\mathbbm{1}_{A_{2}}\right)
                    -\mE\left(\mathbbm{1}_{A_{X}}\mathbbm{1}_{A_{1}}\right) \\
                  &=
                    \mE\left\{w(Z)\mathbbm{1}_{A_{2}}\right\}-\mE\left\{w(Z)\mathbbm{1}_{A_{1}}\right\}
                    \,\, \text{since $A_{1},A_{2} \in \cD_{A_X}$} \\
                  &= \mE\left\{w(Z)\mathbbm{1}_{A_{2}\setminus A_{1}}\right\}.
  \end{align*}
  Now consider $(A_{n}: n \in \mN)$, an increasing sequence in
  $\cD_{A_{X}}$.  Then $A_{n} \uparrow \displaystyle\cup_{k}A_{k}$ and
  $\mathbbm{1}_{A_{X}}\mathbbm{1}_{A_{n}} \uparrow
  \mathbbm{1}_{A_{X}}\mathbbm{1}_{\displaystyle\cup_{k}A_{k}}$
  pointwise.  Thus by monotone convergence
  $\mE\left(\mathbbm{1}_{A_{X}}\mathbbm{1}_{A_{n}}\right) \uparrow
  \mE\left(\mathbbm{1}_{A_{X}}\mathbbm{1}_{\displaystyle\cup_{k}A_{k}}\right)$.
  Also
  $w(Z)\mathbbm{1}_{A_{n}} \uparrow
  w(Z)\mathbbm{1}_{\displaystyle\cup_{k}A_{k}}$ pointwise.  Thus by
  monotone convergence,
  $\mE\left\{w(Z)\mathbbm{1}_{A_{n}}\right\} \uparrow
  \mE\left\{w(Z)\mathbbm{1}_{\displaystyle\cup_{k}A_{k}}\right\}$.

  We can now see that:
  \begin{align*}
    \mE\left(\mathbbm{1}_{A_{X}}\mathbbm{1}_{\displaystyle\cup_{n}A_{n}}\right)
    &=\lim_{n}\mE\left(\mathbbm{1}_{A_{X}}\mathbbm{1}_{A_{n}}\right) \\
    &=\lim_{n}\mE\left\{w(Z)\mathbbm{1}_{A_{n}}\right\} \\
    &=\mE\left\{w(Z)\mathbbm{1}_{\displaystyle\cup_{n}A_{n}}\right\}.
  \end{align*}
\end{proof0}


\begin{proof0}{\proporef{vieq}}\quad\\
  \itref{vid1} $\Rightarrow$ \itref{vid2}: It readily follows from
  \itref{vid1} since $R(X \mid y,z)= R(X|z)$
  which is a function of $z$ only.\\
  \itref{vid2} $\Rightarrow$ \itref{vid1}: Let $a(Z):= R(X \mid Y,Z)$
  and let $(y,z) \in R(Y,Z)$.  Then
  \begin{align*}
    R(X \mid z)&= \{X(\sigma): \sigma \in \cS, Z(\sigma)=z \} \\
               &= \bigcup_{y' \in R(Y \mid z)} \{X(\sigma): \sigma \in \cS, (Y,Z)(\sigma)=(y',z) \} \\
               &= \bigcup_{y' \in R(Y \mid z)} R(X \mid y',z) \\
               &= \bigcup_{y' \in R(Y \mid z)} a(z)  \\
               &= a(z) \\
               &= R(X \mid y,z).
  \end{align*} 
  \itref{vid1} $\Rightarrow$ \itref{vid3}: Let $z \in R(Z)$.  Then
  \begin{align*}
    R(X,Y \mid z)&= \{(X,Y)(\sigma): \sigma \in \cS, Z(\sigma)=z \} \\
                 &= \bigcup_{y \in R(Y \mid z)} \bigcup_{x \in R(X \mid y,z)} \{(x,y)\} \\
                 &= \bigcup_{y \in R(Y \mid z)} \bigcup_{x \in R(X \mid z)} \{(x,y)\} \quad \text{by \itref{vid1}}\\
                 &= R(X \mid z) \times R(Y \mid z).
  \end{align*}
  \itref{vid3} $\Rightarrow$ \itref{vid1}: Let $(y,z) \in R(Y,Z)$.
  Then
  \begin{align*}
    R(X \mid y,z)&=\{x : x \in R(X \mid y,z)\}\\
                 &= \{x : (x,y) \in R(X,Y \mid z)\} \\
                 &= \{x : (x,y) \in R(X \mid z) \times R(Y \mid z)\} \quad \text{by \itref{vid3}} \\
                 &= \{x : x \in R(X \mid z), y \in R(Y \mid z)\} \\
                 &= \{x : x \in R(X \mid z)\} \\
                 &= R(X \mid z).
  \end{align*}
\end{proof0}

\begin{proof0}{\proporef{eqlemma}}\quad\\
  \itref{avim1} to \itref{avim2}: For $y \in Y(\cS)$, take
  $\sigma_{y}\in Y^{-1}(\{y\})$.  Define
  $f: Y(\cS) \rightarrow W(\cS)$ by $f(y)=W(\sigma_{y})$.  To check
  that $f(Y)=W$, let $\sigma \in \cS$ and $y=Y(\sigma)$.  Then
  $f(Y(\sigma))=f(y)=W(\sigma_{y})$.  By \itref{vim1} (since
  $Y(\sigma)=Y(\sigma_{y})=y$), we get that $W(\sigma_{y})=W(\sigma)$.
  Thus $f(Y(\sigma))=W(\sigma)$ for all $\sigma \in \cS$.

  \noindent\itref{avim2} to \itref{avim1}: Follows from the definition of
  $\preceq$.
\end{proof0}

\begin{proof0}{\proporef{nsprop1}}\quad\\ 
  \itref{nspr1} $\Rightarrow$ \itref{nspr2}.  Let $\phi \in \Phi(\cS)$
  and let $h_{1}$ be a real, bounded and measurable function.  Then
  for $\sigma \in \Phi^{-1}(\phi)$ and $h_{2}$ a real bounded and
  measurable function, we have:
  \begin{align*}
    \mE_\sigma\left\{h_{1}(X)h_{2}(Y)\mid Z\right\}&=
                                                     \mE_\sigma\left[\mE_\sigma\left\{h_{1}(X)h_{2}(Y)\mid
                                                     Y,Z\right\}\mid Z\right]
                                                     \,\, \text{\as{\mP_\sigma}}\\
                                                   &= \mE_\sigma\left[h_{2}(Y) \mE_\sigma\left\{h_{1}(X)\mid
                                                     Y,Z\right\}\mid Z\right]
                                                     \,\, \text{\as{\mP_\sigma}}\\
                                                   &= \mE_\sigma\{h_{2}(Y) w_{\phi}(Z)\mid Z\} \,\, \text{\as{\mP_\sigma} by (i) }\\
                                                   &= w_{\phi}(Z)\, \mE_\sigma\left\{h_{2}(Y)\mid Z\right\} \,\,
                                                     \text{\as{\mP_\sigma}}.
  \end{align*}

  \noindent \itref{nspr2}$\Rightarrow$ \itref{nspr3}.  Follows
  directly on taking $h_1(X) = \mathbbm{1}_{A_{X}}$,
  $h_2(Y) = \mathbbm{1}_{A_{Y}}$.

  \noindent \itref{nspr3}$\Rightarrow$ \itref{nspr4}.  Let
  $\phi \in \Phi(\cS)$ and $A_{X} \in \sigma(X)$ and consider
  $w_{\phi}(Z)$ as in \itref{nspr3}.  Note that~\eqref{nstoceq} is
  equivalent to: for all $\sigma \in \Phi^{-1}(\phi)$,
  \begin{equation}
    \label{eq:1}
    \mE_\sigma\left(\mathbbm{1}_{A_{X}}\mathbbm{1}_{A_{Y,Z}}\right)=
    \mE_\sigma\left\{w_{\phi}(Z)\mathbbm{1}_{A_{Y,Z}}\right\} 
    \,\, \text{whenever $A_{Y,Z} \in \sigma(Y,Z)$}.
  \end{equation}
  To show~\eqref{1}, consider
  \begin{equation*}
    \cD_{A_{X}}=\left\{A_{Y,Z} \in \sigma(Y,Z):\mE_\sigma\left(\mathbbm{1}_{A_{X}}\mathbbm{1}_{A_{Y,Z}}\right)
      =\mE_\sigma\{w_{\phi}(Z)\mathbbm{1}_{A_{Y,Z}}\}\,\right\}
  \end{equation*} 
  and
  \begin{equation*}
    \Pi=\{A_{Y,Z} \in \sigma(Y,Z): A_{Y,Z}=A_{X}\cap A_{Y} \text{ for some } A_{Y} \in \sigma(Y),A_{Z} \in \sigma(Z)\}.
  \end{equation*}
  Then $\sigma(\Pi)=\sigma(Y,Z)$~\cite[p.~73]{res:14}.  We will show
  that $\cD_{A_{X}}$ is a $d$-system that contains $\Pi$; it will then
  follow from Dynkin's lemma~\cite[p.~42]{bil:95} that $\cD_{A_{X}}$
  contains $\sigma(\Pi)=\sigma(Y,Z)$
  and hence that~\eqref{1} holds.  \\ \\
  To show that $\cD_{A_{X}}$ contains $\Pi$, take
  $A_{Y} \in \sigma(Y)$ and $A_{Z} \in \sigma(Z)$, and let
  $A_{Y,Z}=A_{Y}\cap A_{Z}$.  Then
  \begin{align*}
    \mE_\sigma\left(\mathbbm{1}_{A_{X}}\mathbbm{1}_{A_{Y,Z}}\right)
    &=\mE_\sigma\left\{\mE_\sigma\left(\mathbbm{1}_{A_{X}}\mathbbm{1}_{A_{Y}\cap A_{Z}}\mid Z\right)\right\}\\
    &=\mE_\sigma\left\{\mathbbm{1}_{A_{Z}}\mE_\sigma\left(\mathbbm{1}_{A_{X}}\mathbbm{1}_{A_{Y}}\mid Z\right)\right\}\\
    &=\mE_\sigma\left\{\mathbbm{1}_{A_{Z}}w_{\phi}(Z)\mE_\sigma(\mathbbm{1}_{A_{Y}}\mid
      Z)\right\}
      \,\, \text{by (iii)} \\
    &=\mE_\sigma\left[\mE_\sigma\left\{w_{\phi}(Z)\mathbbm{1}_{A_{Y}\cap A_{Z}}\mid Z\right\}\right] \\
    &=\mE_\sigma\left\{w_{\phi}(Z)\mathbbm{1}_{A_{Y,Z}}\right\}.
  \end{align*}

  To show that $\cD_{A_{X}}$ is a $d$-system, first note that
  $\Omega \in \cD_{A_{X}}$.  Also, for $A_{1},A_{2} \in \cD_{A_{X}}$
  such that $A_{1} \subseteq A_{2}$, we have:
  \begin{align*}
    \mE_\sigma\left(\mathbbm{1}_{A_{X}}\mathbbm{1}_{A_{2}\setminus
    A_{1}}\right) &=
                    \mE_\sigma\left(\mathbbm{1}_{A_{X}}\mathbbm{1}_{A_{2}}\right)
                    -\mE_\sigma\left(\mathbbm{1}_{A_{X}}\mathbbm{1}_{A_{1}}\right) \\
                  &=
                    \mE_\sigma\left\{w_{\phi}(Z)\mathbbm{1}_{A_{2}}\right\}-\mE_\sigma\left\{w_{\phi}(Z)\mathbbm{1}_{A_{1}}\right\}
                    \,\, \text{since $A_{1},A_{2} \in \cD_{A_X}$} \\
                  &= \mE_\sigma\left\{w_{\phi}(Z)\mathbbm{1}_{A_{2}\setminus
                    A_{1}}\right\}.
  \end{align*}
  Now consider an increasing sequence $(A_{n}: n \in \mN)$ in
  $\cD_{A_{X}}$.  Then $A_{n} \uparrow \displaystyle\cup_{k}A_{k}$ and
  $\mathbbm{1}_{A_{X}}\mathbbm{1}_{A_{n}} \uparrow
  \mathbbm{1}_{A_{X}}\mathbbm{1}_{\displaystyle\cup_{k}A_{k}}$
  pointwise.  Thus by monotone convergence
  $\mE_\sigma(\mathbbm{1}_{A_{X}}\mathbbm{1}_{A_{n}}) \uparrow
  \mE_\sigma(\mathbbm{1}_{A_{X}}\mathbbm{1}_{\displaystyle\cup_{k}A_{k}})$.
  Also
  $w_{\phi}(Z)\mathbbm{1}_{A_{n}} \uparrow
  w_{\phi}(Z)\mathbbm{1}_{\displaystyle\cup_{k}A_{k}}$ pointwise, and
  thus by monotone convergence
  $\mE_\sigma\{w_{\phi}(Z)\mathbbm{1}_{A_{n}}\} \uparrow
  \mE_\sigma\{w_{\phi}(Z)\mathbbm{1}_{\displaystyle\cup_{k}A_{k}}\}$.
  We can now see that
  \begin{align*}
    \mE_\sigma\left(\mathbbm{1}_{A_{X}}\mathbbm{1}_{\displaystyle\cup_{n}A_{n}}\right)
    &=\lim_{n}\mE_\sigma\left(\mathbbm{1}_{A_{X}}\mathbbm{1}_{A_{n}}\right) \\
    &=\lim_{n}\mE_\sigma\left\{w_{\phi}(Z)\mathbbm{1}_{A_{n}}\right\}
      \,\, \text{since $A_{n} \in \cD_{A_X}$}\\
    &=\mE_\sigma\left\{w_{\phi}(Z)\mathbbm{1}_{\displaystyle\cup_{n}A_{n}}\right\}.
  \end{align*}

  \noindent \itref{nspr4}$\Rightarrow$ \itref{nspr1}.  We will prove
  \itref{nspr4} using the monotone class
  theorem~\cite[p.~235]{dur:13}.
  Let $\phi \in \Phi(\cS)$ and consider \\ \\*
  $V:=\{h \text{ real, bounded and measurable: there exists } w_{\phi}(Z) \text{ such that, for all } \sigma \in \Phi^{-1}(\phi)$, \\
  $\mE_\sigma\{h(X)\mid Y,Z\}=w_{\phi}(Z) \,\, \as{\mP_\sigma}\}.$  \\ \\*
  By linearity of expectation, $V$ is a vector space of real and
  bounded functions, and by \itref{nspr4} $\mathbbm{1}_{A_{X}} \in V$
  for all $A_{X} \in \sigma(X)$.  Now let $h_{n} \in V$ for all
  $n \in \mathbb{N}$ such that $0 \leq h_{n} \uparrow h$ with $h$
  bounded.  Using conditional monotone
  convergence~\cite[p.~193]{dur:13},
  \begin{align*}
    \mE_\sigma\left\{h(X)\mid Y,Z\right\}
    &= \lim_{n\rightarrow \infty} \mE_\sigma\left\{h_{n}(X)\mid Y,Z\right\} \\
    &= \lim_{n\rightarrow \infty} w_{\phi}^{n}(Z)\,\, \text{since $h_{n} \in V$}\\
    &=: w_{\phi}(Z)
  \end{align*}
  which proves that $h \in V$.  By the monotone class
  theorem~\cite[p.~235]{dur:13}, $V$ contains every bounded measurable
  function, and thus we have shown \itref{nspr1}.
\end{proof0}


\bibliography{cibib} \bibliographystyle{plainnat}
\end{document}